\def\ints{{\mathbb Z}}
\def\rats{{\mathbb Q}}
\def\nats{{\mathbb N}}
\def\reals{{\mathbb R}}
\def\complex{{\mathbb C}}
\def\proj{{\mathbb P}}
\def\aff{{\mathbb A}}
\def\FF{{\mathbb F}}
\def\Gp{{\mathbb G}}
\def\qed{\hfill $\Box$}
\def\Hom{{\text{Hom}}}
\def\cf{{\text{char}}}
\def\Frac{{\text{Frac}}}
\def\Sets{{\text{Sets}}}
\def\top{{\text{top}}}
\def\an{{\text{an}}}
\DeclareMathOperator{\Aut}{Aut}
\def\Ind{{\text{Ind}}}
\DeclareMathOperator{\Spec}{Spec }
\def\Spf{{\mbox{Spf }}}
\def\mc#1{\mathcal{#1}}
\def\mf#1{\mathfrak{#1}}
\def\ol#1{\overline{#1}}
\newtheorem{theorem}{Theorem}[section]
\newtheorem{prop}[theorem]{Proposition}
\newtheorem{lemma}[theorem]{Lemma}
\newtheorem{conjecture}[theorem]{Conjecture}
\newtheorem{corollary}[theorem]{Corollary}
\newtheorem{predefinition}[theorem]{Definition}
\newenvironment{definition}{\begin{predefinition}\rm}{\end{predefinition}}
\newtheorem{preremark}[theorem]{Remark}
\newenvironment{remark}{\begin{preremark}\rm}{\end{preremark}}
\newtheorem{preconstruction}[theorem]{Construction}
\newtheorem{preframework}[theorem]{Framework}
\newtheorem{prenotation}[theorem]{Notation}
\newtheorem{preexample}[theorem]{Example}
\newenvironment{example}{\begin{preexample}\rm}{\end{preexample}}
\newtheorem{preclaim}[theorem]{Claim}
\newtheorem{prequestion}[theorem]{Question}
\newenvironment{question}{\begin{prequestion}\rm}{\end{prequestion}}
\numberwithin{equation}{section}
\begin{document}

\title[Lifting of curves with automorphisms]{Lifting of curves with automorphisms} 

\author{Andrew Obus}
\address{University of Virginia, 141 Cabell Drive, Charlottesville, VA
22904}
\email{andrewobus@gmail.com}
\thanks{The author was supported by NSF Grants DMS-1265290 and DMS-1602054}
{\let\thefootnote\relax\footnotetext{\emph{2010 Mathematics Subject
    Classification: Primary 14H37, 12F10; Secondary 11G20, 12F15, 13B05, 13K05, 14G22, 14H30.}}}
\date{\today}

\keywords{branched cover, lifting, Galois group, Oort conjecture, wild
ramification, curves with automorphisms}

\begin{abstract}
The lifting problem for curves with automorphisms asks whether we can lift a smooth projective characteristic $p$
curve with a group $G$ of automorphisms to characteristic zero.  This was solved by
Grothendieck when $G$ acts with prime-to-$p$ stabilizers, and there has
been much progress over the last few decades in the wild case.  We
survey the techniques and obstructions for this lifting problem,
aiming at a reader whose background is limited to scheme
theory at the level of Hartshorne's book.  Throughout, we include
numerous examples and clarifying remarks.  We also provide a list of
open questions.
\end{abstract}

\maketitle

\section{Introduction}\label{Sintro}
A fundamental success of algebraic geometry its ability to reason
about algebraically defined objects in characteristic $p$ using geometric
intuition gleaned from our experience in the real world.  For instance,
not only can we define concepts such as \emph{smoothness} and
\emph{tangent spaces} in characteristic $p$, but such concepts also
turn out to reflect our characteristic zero intuition surprisingly
well.  

Of course, characteristic $p$ geometry does not behave \emph{exactly}
like geometry in characteristic zero!   The differences are too numerous to count,
but let us quickly mention one example, which will be motivating for
the discussion to come.  Consider the affine line
$X = \aff^1_{\complex}$.  There are certainly no nontrivial finite
\'{e}tale covers\footnote{For our purposes, a \emph{branched cover} $f: Y \to X$ is a
  finite, flat, generically \'{e}tale morphism of geometrically connected,
  normal schemes.  If $f$ is unramified, we say it is an
  \emph{\'{e}tale} cover.}
$Y \to X$, because any such cover would give a topological cover
$Y(\complex) \to X(\complex)$ with the complex topology (e.g.,
\cite[Proposition 4.5.6]{Sz:gg}), contradicting
the fact that $X(\complex) \cong \complex$ is simply
connected. However, in characteristic $p$ we have the following example.

\begin{example}\label{Ebasicwild}
Let $k$ be a field of characteristic $p$ (say, algebraically closed, to preserve the analogy
as well as possible).  The map from the zero-locus $Y$ of $y^p -
y = x$ in $\aff^2_k$ to $\aff^1_k$ given by projecting to the
$x$-coordinate is a finite \'{e}tale cover.  Indeed, the group
$\ints/p$ acts freely on $Y$ by sending $(x, y)$ to $(x, y+1)$, and
the projection is nothing but the quotient map.
\end{example}

In fact, there are a wealth of finite \'{e}tale covers of $\aff^1_k$.  The famous
\emph{Abhyankar's conjecture} (now a result of Raynaud (\cite{Ra:ab})
and later generalized by Harbater (\cite{Ha:ac})) shows that for every
$G$ generated by its $p$-Sylow subgroups (in particular, for every
simple group $G$ with order divisible by $p$), there exists a finite
\'{e}tale $G$-Galois cover $Y \to \aff^1_k$.  

The lifting problem and local lifting problem that are the subject of
this paper are inspired by understanding the relation between branched
covers in characteristic $p$ and in characteristic zero.  

\subsection{Riemann's existence theorem}\label{SRiemann}

Our starting point is the following theorem (see {\cite[Theorem 2.1.1]{Ha:pg}).

\begin{theorem}{\rm \textbf{(Riemann's existence theorem)}}\label{TRiemann}
Let $X$ be a smooth, projective algebraic curve over $\complex$, and let $f^{\top}:
Y^{\top} \to X(\complex)$ be a finite topological 
branched cover\footnote{Recall that a \emph{topological branched
    cover} is a continuous map that is a topological cover away from a nowhere
  dense set of points.} with finitely many branch points.
Then there exists a branched cover $f: Y \to X$, unique up to isomorphism, such that the
corresponding topological cover $f^{\an}: Y(\complex) \to X(\complex)$ (in the
complex topology) is isomorphic to $f^{\top}$ (that is, there is
a homeomorphism $i_Y: Y(\complex) \to Y^{\top}$ such that $f^{\top} \circ i_Y = f^{\an}$).
\end{theorem}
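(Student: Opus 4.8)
The plan is to pass to a genuine covering space away from the branch points, transport the complex structure to $Y^{\top}$ to make it into a compact Riemann surface, and then algebraize everything transcendentally. First I would set $B = \{x_1, \dots, x_r\} \subset X(\complex)$ equal to the (finite) branch locus and put $U^{\an} = X(\complex) \smallsetminus B$, with $U = X \smallsetminus B$ the corresponding Zariski-open curve. Over $U^{\an}$ the map $f^{\top}$ is an honest finite topological covering, so I would pull back the complex-analytic structure of $X(\complex)$ to $(f^{\top})^{-1}(U^{\an})$. Near a point $y \in Y^{\top}$ lying over some $x_i$, the classification of connected covers of a small punctured disc shows that $f^{\top}$ is locally the standard map $z \mapsto z^{e}$ of a disc to a disc for some integer $e \geq 1$; using such a $z$ as a chart at $y$ extends the atlas compatibly. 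Since $Y^{\top}$ is compact (being a finite cover of the compact space $X(\complex)$) and Hausdorff, this atlas makes $Y^{\top}$ into a compact Riemann surface $Y^{\an}$, and turns $f^{\top}$ into a nonconstant holomorphic map $Y^{\an} \to X(\complex)$. (I assume $Y^{\top}$ connected, so that the eventual $Y$ is geometrically connected; otherwise one argues component by component.)

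Next I would algebraize. Here I invoke the theorem that every compact Riemann surface is the analytification of a smooth projective curve over $\complex$: using the finiteness and Serre duality of sheaf cohomology on a compact Riemann surface (i.e.\ Riemann--Roch) one produces enough meromorphic functions to embed $Y^{\an}$ into some $\proj^N(\complex)$, whereupon Serre's GAGA realizes the image as the analytification of a smooth projective curve $Y/\complex$. To obtain the morphism, observe that the graph $\Gamma_{f^{\top}}$ is a closed analytic subset of $(Y \times_{\complex} X)^{\an}$, and $Y \times_{\complex} X$ is projective, so by GAGA (Chow's theorem) $\Gamma_{f^{\top}} = \Gamma^{\an}$ for a closed subvariety $\Gamma \subseteq Y \times_{\complex} X$; the first projection $\Gamma \to Y$ is an analytic isomorphism, hence an isomorphism of varieties, and composing its inverse with the second projection gives a morphism $f : Y \to X$ with $f^{\an} \cong f^{\top}$. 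Finally $f$ is nonconstant, hence finite (proper and quasi-finite); it is flat, a finite morphism of integral smooth curves being automatically flat; and it is generically \'{e}tale, since it restricts to an \'{e}tale cover over $U$ (or simply because the characteristic is zero). So $f : Y \to X$ is a branched cover in the stated sense with $f^{\an} \cong f^{\top}$.

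For uniqueness, suppose $f_i : Y_i \to X$, $i = 1, 2$, are branched covers with $f_i^{\an} \cong f^{\top}$. Composing the two isomorphisms gives an isomorphism $Y_1^{\an} \cong Y_2^{\an}$ compatible with the maps to $X(\complex)$; as above its graph is a closed analytic subset of $(Y_1 \times_X Y_2)^{\an}$, hence algebraic by GAGA, and projecting to the two factors yields the desired isomorphism $Y_1 \cong Y_2$ over $X$.

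The hard part, I expect, is the transcendental ingredient in the algebraization step --- that a compact Riemann surface carries enough meromorphic functions to be projective-algebraic --- which, together with GAGA, is essentially the whole content of the theorem; granted these, the remaining arguments are formal covering-space theory and scheme theory. One variant keeps the passage over the branch points purely algebraic: once the \'{e}tale cover of $U^{\an}$ has been realized as an algebraic \'{e}tale cover $V \to U$, take $Y$ to be the normalization of $X$ in the function field $\complex(V)$ and check that the local analytic shape $z \mapsto z^{e}$ of this normalization over each branch point recovers $Y^{\an} \cong Y^{\top}$.
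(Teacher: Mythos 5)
The paper does not actually prove this theorem; it only cites it (to Harbater's \emph{Patching and Galois theory}), so there is no in-paper argument to compare against. Your proposal is the standard transcendental proof and it is essentially correct: delete the branch locus to get a genuine finite covering of $U^{\an}$, pull back the complex structure, fill in the punctures using the classification of finite connected covers of a punctured disc ($z\mapsto z^e$), invoke Riemann--Roch to embed the resulting compact Riemann surface projectively, and use GAGA/Chow on graphs to algebraize both the map and any isomorphism between two algebraic models. The one point worth flagging is the step where you claim the local normal form $z\mapsto z^e$ near a point of $Y^{\top}$ over a branch point: with the footnote's bare definition of a topological branched cover (a continuous map that is a covering away from a nowhere dense set), this is not automatic --- a priori the fiber over a branch point could be badly behaved, and $Y^{\top}$ need not even be a manifold there. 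The honest formulations of Riemann's existence theorem either build this into the definition (the branched cover is, by fiat, the ``completion'' of a finite covering of $X(\complex)\smallsetminus B$ obtained by adding one point per end, which is exactly your ``variant'' of normalizing $X$ in $\complex(V)$) or impose enough hypotheses (properness, $Y^{\top}$ a surface) to force it. Your closing variant --- algebraize the \'{e}tale cover $V\to U$ and take $Y$ to be the normalization of $X$ in $\complex(V)$ --- sidesteps this issue cleanly and is the preferable route; with that reading, the argument is complete.
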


Since topological covers correspond to quotients of fundamental
groups, and fundamental groups of Riemann surfaces are well understood, 
this leads to combinatorial/group-theoretic parameterizations of
branched covers of algebraic curves over $\complex$.  
For instance, degree $n$ covers of 
$\proj^1_{\complex}$ branched at $r+1$ points $\{x_0, \ldots, x_r\}$ correspond to index $n$
subgroups of the topological fundamental group of $\proj^1 \backslash
\{x_0, \ldots, x_r\}$, which is the free group $F_r$.  

\begin{example}\label{EBelyi} {\textbf{(Belyi maps)}}
A famous result of Belyi (\cite{Be:ge}) states that every algebraic
curve $X$ defined over $\ol{\rats}$ can be expressed as a branched
cover of $\proj^1_{\ol{\rats}}$, \'{e}tale outside $\{0, 1,
\infty\}$.  Such covers are called \emph{three-point covers}.  By
Riemann's existence theorem, these three-point covers correspond to finite
index subgroups $N$ of $F_2$.  The cover is \emph{Galois} with group
$G$ if and only $N$ is normal in $F_2$ and $F_2/N \cong G$.
\end{example}

\begin{remark}\label{Rcharpbelyi}
The situation in characteristic $p$ is quite different. In fact,
\emph{every} smooth geometrically connected curve $X$ defined over a
perfect field $k$ of characteristic $p$ has a map
to $\proj^1_k$ \'{e}tale outside $\{\infty\}$, see
\cite[Lemma 16]{Ka:TL}, or \cite[Remark 4]{Ab:ca} for the case when $k$
is algebraically closed.  In fact, if there exists an exact
differential form on $X$ whose divisor is supported at one $k$-point
and $k$ is algebraically closed, then
one can force the map to have only one ramification point over
$\infty$ (\cite[Theorem 1]{Za:1p}). 
\end{remark}

The earliest results on the lifting problem were motivated by the
search for a characteristic $p$ version of Riemann's existence
theorem.  That is, is there a way to parameterize branched covers of
curves in characteristic $p$ in terms of well-understood group theory?
In some sense, obtaining a full answer is
hopeless.  For instance, even if we restrict ourselves to branched
covers of $\proj^1$ branched only at $\infty$, there are infinitely
many linearly disjoint $\ints/p$-covers (take the smooth projective
completion of the affine cover $V(y^p - y  - x^N) \subseteq \aff^2 \to
\aff^1$ given by projecting to the $x$-axis, as $N$ ranges through
$\nats \backslash p\nats$).  This tells us that the fundamental group
of $\aff^1_k$, for $k$ an algebraically closed field of
characteristic $p$, is not finitely generated (since it has an
infinitely large elementary abelian $p$-quotient).

However, the situation is better when we restrict our attention to
\emph{tame} covers, that is, covers where $p$ does not divide the
ramification indices.  In this situation, Grothendieck showed that
each branched cover in characteristic $p$ is the reduction of a
branched cover in characteristic zero, which is more or less unique.

\begin{theorem}\label{Ttamelifting} {\rm \textbf{(Grothendieck's
      ``tame Riemann existence converse'' theorem in 
characteristic $p$)}} 
Let $R$ be a complete discrete valuation ring with algebraically closed
residue field $k$ of characteristic $p$
and fraction field $K$ with algebraic closure $\ol{K}$.  Let $X_R$ be
a smooth, projective, relative $R$-curve with special fiber $X$ and
generic fiber $X_K$, and let $x_{1, R}, \ldots, x_{n,R}$ be pairwise disjoint
sections of $X_R \to \Spec R$.  Write $x_i$ for the intersection
of $x_{i,R}$ with $X$.

If $f: Y \to X$ is a \emph{tamely} ramified finite cover, \'{e}tale
above $X \backslash \{x_1, \ldots, x_n\}$, then there is a
unique finite flat branched cover $f_R: Y_R \to X_R$, \'{e}tale above $X_R
\backslash \{x_{1, R}, \ldots, x_{n, R}\}$ with the same ramification
indices as that of $f$, such that the special fiber of $f_R$ is $f$.  If
$f$ is $G$-Galois, then so is $f_R$.
\end{theorem}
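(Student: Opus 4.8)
The plan is to reduce the global statement to a purely local one via deformation theory, and then to solve the local problem by hand using Kummer theory. The key input is that tame covers are cohomologically rigid: étale-locally near a tamely ramified point, a degree-$e$ cover of $\Spec R[[x]]$ with $p \nmid e$ is given by extracting an $e$-th root of $x$ (up to a unit), and this description is insensitive to the base. So first I would set up the obstruction-theoretic framework: let $\hat{R}_n = R/\mf{m}^{n+1}$, and attempt to construct a compatible system of covers $f_n \colon Y_n \to X_{\hat R_n}$ lifting $f = f_0$, étale outside the sections $x_{i,\hat R_n}$ and tamely ramified with the prescribed indices. Passing from level $n$ to level $n+1$ is controlled by a deformation/obstruction theory whose groups are cohomology of a suitable sheaf on $X$ (the tangent complex of the moduli problem of tame covers, or equivalently the equivariant cotangent complex of $Y \to X$).

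The heart of the matter is the \emph{local} computation: at each branch point $x_i$, complete to get $\Spec k[[x]]$, and consider the cover given locally by $\Spec k[[y]] \to \Spec k[[x]]$, $x \mapsto y^{e_i}$ (after choosing a uniformizer; since $p \nmid e_i$, all tame extensions of $k((x))$ of degree $e_i$ are of this form, being cyclic by tameness). This local cover lifts \emph{uniquely} over any $\hat R_n$ — indeed over all of $R$ — namely as $\Spec R[[y]] \to \Spec R[[x]]$, $x \mapsto y^{e_i}$; one checks there are no nontrivial infinitesimal automorphisms or obstructions because the relevant local cohomology groups vanish (the key point being invertibility of $e_i$, which makes $H^1$ and $H^2$ of the cyclic group $\ints/e_i$ with the relevant coefficients vanish, or equivalently makes $d/dx$ an isomorphism on the relevant spaces). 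Away from the branch points the cover is étale, so it lifts uniquely by formal smoothness / the topological invariance of the étale site. Thus the local obstructions all vanish, and the global obstruction in $H^2(X, \mathcal{T})$ vanishes as well because $\mathcal{T}$ is a skyscraper sheaf supported at the $x_i$ (its stalk being the local deformation space, which we just saw is zero beyond the trivial étale deformations already accounted for). Hence the system $(f_n)$ exists, and uniqueness at each stage follows from the vanishing of the corresponding $H^1$'s; the $G$-action lifts along with it because the whole construction is functorial and each step's lift is unique, so $G$ must preserve it.

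Finally I would algebraize: Grothendieck's existence theorem (formal GAGA, \cite[III.5.4]{EGA} — but I will only cite results at the level the paper permits) upgrades the compatible formal system $(Y_n)$ to an actual finite flat $R$-scheme $Y_R$ with a map $f_R \colon Y_R \to X_R$; since $X_R$ is proper over $R$ and $Y_n \to X_{\hat R_n}$ is finite, this applies. Normality of $Y_R$ and the stated ramification behavior are then checked on special and generic fibers: $Y_R$ is flat over $R$ with normal special fiber (the special fiber is $Y$, which is normal — smooth, in fact, if $X$ is, by the tameness), so $Y_R$ is normal, and the ramification indices are locally constant in the family by the explicit local form $x \mapsto y^{e_i}$. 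Uniqueness of $f_R$ globally follows from uniqueness of the formal system together with the fully faithful part of Grothendieck existence. The step I expect to be the main obstacle — and the one that genuinely uses the tameness hypothesis $p \nmid e_i$ — is the local deformation-theoretic computation showing the obstruction and automorphism groups vanish: this is exactly where wild ramification would break everything, since then $H^1$ and $H^2$ of the $p$-group stabilizer with coefficients in the (positive-characteristic) module of differentials need not vanish, which is precisely why the wild case is hard and is the subject of the rest of this survey.
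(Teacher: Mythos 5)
Your proposal is correct in substance, but be aware that the paper does not actually prove this theorem: its ``proof'' is a pointer to Grothendieck's \'{e}tale and tame lifting theory in SGA~1, together with the remark that the Galois case also follows from the local-global principle (Theorem \ref{Tlocalglobal}) combined with the explicit local lift of Example \ref{Elocaltame}. What you have written is essentially a reconstruction of that second route, and the shared heart is exactly the local fact you isolate: over a complete base with algebraically closed residue field, a connected normal tame cover of $\Spec k[[x]]$ branched only at $(x)$ is cyclic Kummer, $x = y^{e}$, and lifts uniquely and rigidly to $\Spec R[[x]]$ because $e$ is invertible. Two points of comparison are worth making. First, your global gluing step is phrased cohomologically (vanishing of obstruction groups for a sheaf $\mathcal{T}$ asserted to be a skyscraper); making this precise requires identifying the deformation theory of the pair (cover, branch divisor) with $\Ext^i$ of the cotangent complex $L_{Y/X}$, which is supported on the ramification locus since $f$ is generically \'{e}tale, and then running the local-to-global spectral sequence. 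The paper's Proposition \ref{Ppatching} sidesteps this entirely by explicitly gluing the unique \'{e}tale lift over the complement to the explicit local lifts over the formal discs along the boundary algebras $R[[T]]\{T^{-1}\}$, then invoking Grothendieck's existence theorem; that version produces the lift directly without having to set up tangent/obstruction spaces for a nonstandard moduli problem, whereas your cohomological framing generalizes more readily (it is essentially the Bertin--M\'{e}zard approach of \S\ref{Sdeflocalglobal}). Second, your uniqueness and $G$-equivariance arguments are fine, but they silently use one more local input: that \emph{any} lift, restricted to the formal neighborhood of $x_{i,R}$, is again of Kummer form $y^{e_i} = x\cdot(\text{unit})$. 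This is the tame Abhyankar lemma over $R[[x]]$ rather than over $k[[x]]$; it is standard (and is part of what the SGA~1 citation supplies), but it is the one step in your sketch that is asserted rather than derived from the residue-field case.
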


\begin{proof}
If $f$ is \emph{\'{e}tale}, the theorem follows from Grothendieck's theory of \'{e}tale lifting (\cite[I, Corollaire 8.4]{SGA1}, combined with \cite[III, Prop.\ 7.2]{SGA1}).
In general, we can use Grothendieck's theory of tame lifting
(\cite[XIII, Corollaire 2.12]{SGA1}, or \cite{We:dt} for an
exposition).  See also \cite{Fu:hs} for an alternate proof.  If $f$ is
a Galois cover, the theorem
also follows from the \emph{local-global principle} (Theorem
\ref{Tlocalglobal}), along with Example \ref{Elocaltame}.  
\end{proof}

\begin{remark}
The cover $f_R$ is called a \emph{lift} of the cover $f$ over $R$.
Thus, the tame Riemann existence converse can be stated succinctly as
``tame covers lift over $R$ (uniquely once the branch locus is
fixed).''  See \S\ref{Slifting}.
\end{remark}

\begin{remark}
Assume without loss of generality that $k$ is countable (any morphism
of varieties in characteristic $p$, being given by finitely many equations, can be defined over some finitely
generated extension of $\FF_p$, whose algebraic closure is thus
countable).  If $R$ is a complete discrete valuation ring with residue
field $k$ and fraction field $K$, then it is an easy exercise to show that $|R| =
|K| = |\ol{K}| = |\complex|$, and thus that $\text{char}(R) = 0$ implies $\ol{K} \cong \complex$,
since they are both algebraically closed fields of characteristic zero
with the same cardinality.  

Suppose $\text{char}(R) = 0$ above.  Then Theorem \ref{Ttamelifting}
shows that once the branch locus is fixed, each tame branched cover in
characteristic $p$ gives rise to a branched cover over $R$,
thus a branched cover over $\ol{K} \cong \complex$ via base
change, and thus to a topological branched cover of $X(\complex)$.
This is why we call the theorem a Riemann
existence converse theorem: It shows that any tame algebraic cover in characteristic
$p$ comes from a topological cover over $\complex$.
Example \ref{Ebasicwild} shows that this is not true for wild covers!
\end{remark}

\begin{remark}\label{Rfgmotivation}
Theorem \ref{Ttamelifting} has the following interpretation in terms of fundamental
groups.  Write $U = X \backslash \{x_1, \ldots, x_n\}$ and write $U_{\ol{K}}
= (X_R \backslash \{x_{1,R}, \ldots, x_{n, R} \})
\times_R \ol{K}$.  Let $\pi_1^t(U)$ be the \emph{tame fundamental
group} of $U$ (that is, the automorphism group of the pro-universal tame cover of $X$, \'{e}tale above $U$), 
and let $\pi_1^t(U_{\ol{K}})$ be the \emph{$p$-tame fundamental group} of
  $U_K \times_K \ol{K}$ (same definition, but replace
``tame" with ``ramified of prime-to-$p$ index").  Then there is a
natural surjection $\phi: \pi_1^t(U_{\ol{K}}) \to \pi_1^t(U),$ well
defined up to conjugation. See \cite[Remark 2.4]{Ob:ll} for details on
this surjection.

In particular, when $\text{char}(R) = 0$, this shows that the tame
fundamental group of a characteristic $p$ curve is a \emph{quotient} of the
$p$-tame fundamental group of a characteristic zero curve of the same
genus with the same number of missing points.
\end{remark}

\begin{remark}
The surjection $\phi$ from Remark \ref{Rfgmotivation} descends to an
\emph{isomorphism} $$\phi': \pi_1^{(p')}(U_{\ol{K}}) \to
\pi_1^{(p')}(U),$$ where $\pi_1^{(p')}$ represents the maximal
prime-to-$p$ quotient of $\pi_1$ (\cite[XIII, Corollaire 2.12]{SGA1}).  This could be called the
``prime-to-$p$ Riemann existence theorem'' in characteristic $p$, as
$\phi'$ gives a bijection on finite-index normal subgroups, thus
placing the prime-to-$p$ Galois covers of $X_{\ol{K}}$,
\'{e}tale above $U_{\ol{K}}$, in natural one-to-one correspondence with
the prime-to-$p$ Galois covers of $X$, \'{e}tale above $U$. 
\end{remark}

Of course, we would like to know whether \emph{wild} (i.e., not tame) covers
in characteristic $p$ come from characteristic zero as well.  This
will be our main focus.

\subsection{Lifting of covers of curves}\label{Slifting}
Let $k$ be an algebraically closed field of characteristic $p$.  We formulate the lifting problem precisely:
\begin{question}{\rm \textbf{(The lifting problem for covers of curves)}}\label{Qgloballifting}
Let $G$ be a finite group acting on a smooth, projective curve $Y$
over $k$, so that $f: Y \to X = Y/G$ is a branched $G$-cover of smooth, projective
curves over $k$.  Is there a characteristic zero discrete valuation ring $R$ with residue
field $k$ and a branched 
$G$-cover $f_R: Y_R \to X_R$ of smooth (in particular, flat) relative $R$-curves with special fiber $f$? 
\end{question}
If the answer to the question above is ``yes," then we say that $f$
\emph{lifts to characteristic zero}  (or \emph{lifts over $R$}), and
that $f_R$ is a \emph{lift} of $Y$ (with $G$-action).

In what follows, we will refer to the lifting problem for covers of
curves simply as ``the lifting problem.''
\begin{remark}
It is clearly equivalent to ask whether $Y$ lifts to a curve $Y_R$
with an action of $G$ by $R$-automorphisms whose restriction to the
special fiber is the original $G$-action on $Y$.  This is how the
lifting problem was originally formulated in
\cite{Oo:la}.  One speaks of $(Y, G)$ lifting to characteristic zero.
\end{remark}

\begin{remark}\label{Rstupid}
Of course, one cannot solve the lifting problem by simply taking
equations for $f$, $Y$, $X$, and the $G$-action and lifting the
coefficients to $R$.  There is no reason to expect that you end up with a
$G$-action on $Y_R$ at all in this case.  For instance, suppose $Y =
\proj^1_k$ and the nontrivial element of $G = \ints/2$ acts by sending
$z \to -z$, where $z$ is a coordinate.  
Taking $Y_R = \proj^1_R$ and having the nontrivial element of $G$ send
$z$ to $az$ clearly does not give a $G$-action on $Y_R$ if $a \in R$
is congruent to $-1$ modulo the maximal ideal but is not equal to $\pm
1$.   
\end{remark}

\begin{remark}
In fact, na\"{i}vely lifting as in Remark \ref{Rstupid} does not work,
\emph{even when $G$ is trivial}. For instance, take $Y \subseteq \proj^3_k$ to
be the twisted cubic given by the ideal $(xz - y^2, yw - z^2, xw -
yz)$.  If we lift these equations to characteristic zero, most choices
of coefficients will give a variety $Y_R$ where the generic fiber is
zero-dimensional, in which case $Y_R$ will not be flat over $R$!

The example above may seem silly, since it is easy to lift the
equations correctly (just keep all the coefficients $0$'s and $1$'s).
But in fact there do exist smooth varieties over $k$ that cannot be
lifted to characteristic zero.  The first example of this is a
threefold and is due to Serre (\cite{Se:ev}).
\end{remark}

\begin{remark}\label{Rcurvelifting}
A smooth projective \emph{curve} always lifts over any
\emph{complete} discrete valuation ring $R$ with residue field $k$ (\cite[III, Corollaire 6.10 and
Proposition 7.2]{SGA1}).  Thus Question \ref{Qgloballifting} for tame covers of
curves over $R$ has a positive answer: We first lift $X$ to a curve
$X_R$, we lift the branch points of $f$ to $R$-points of $X_R$ using
the valuative criterion for properness, and
then we apply Theorem \ref{Ttamelifting}.
\end{remark}

In spite of Remark \ref{Rcurvelifting}, the lifting problem does not always have a solution (see Examples
\ref{Ezpzpnonlift}, \ref{Eroquette}, and \ref{Efrobenius}).  However,
one of the major open conjectures in the field, giving a positive
lifting result, has been
recently solved by Obus-Wewers and Pop (\cite{OW:ce}, \cite{Po:oc}).

\begin{theorem}{\rm \textbf{(The Oort conjecture)}} 
The lifting problem for \emph{cyclic} covers of curves has a
solution (for some discrete valuation ring).
\end{theorem}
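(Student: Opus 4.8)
The plan is to reduce the statement, via the local-global principle (Theorem~\ref{Tlocalglobal}), to a purely local question and then to solve that question by induction. If $f \colon Y \to X$ is a cyclic $G$-cover of smooth projective $k$-curves and $y \in Y$ is a closed point, then the decomposition group $D_y \le G$ is cyclic and acts on $\widehat{\mathcal O}_{Y,y} \cong k[[t]]$; the local-global principle says that $f$ lifts over some complete characteristic-zero discrete valuation ring $R$ with residue field $k$ (and we are always free to enlarge $R$) as soon as each of these finitely many local actions lifts. So it suffices to treat the \emph{local lifting problem} for an arbitrary faithful action of a cyclic group on $k[[t]]$. Such a group has the form $\ints/p^n m$ with $p \nmid m$, with wild part $\ints/p^n$ and tame quotient $\ints/m$; lifting the tame subextension first (a totally ramified degree-$m$ cover of the formal disk, which exists by the tame theory --- cf.\ Example~\ref{Elocaltame}) reduces us to lifting a $\ints/p^n$-action over the resulting disk, and the two lifts combine into a lift of the cyclic $\ints/p^n m$-action. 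Hence the whole theorem comes down to the local lifting problem for $G = \ints/p^n$.

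For $G = \ints/p^n$ I would induct on $n$. The base case $n=1$ is classical: an Artin--Schreier $\ints/p$-action on $k[[t]]$ is lifted explicitly by the Sekiguchi--Oort--Suwa and Green--Matignon ``deformation of Artin--Schreier to Kummer,'' which realizes the lift as a $\mu_p$-cover of a disk over $R$ whose $p$ branch points coalesce, in the special fiber, to the single Artin--Schreier branch point --- and, importantly, one has latitude in how these branch points are arranged. For the inductive step one is handed a lift of the $\ints/p^{n-1}$-quotient action and must extend it to a $\ints/p^n$-cover. The efficient language here is that of \emph{Hurwitz trees} (Bouw--Wewers and Obus--Wewers): a lift of a $\ints/p^n$-cover of the disk is encoded by a tree decorated with the ramification and differential data of a semistable model, and the existence of a lift is equivalent to the existence of such a tree whose ``boundary data'' matches the ramification invariants (the upper ramification breaks $u_1 < \cdots < u_n$) of the prescribed special-fiber action. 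Extending the $\ints/p^{n-1}$-lift then means adjoining a top layer to a $\ints/p^{n-1}$-tree, which via Kummer--Artin--Schreier--Witt theory amounts to solving, on each component of a suitable refinement, a differential equation for a form $\omega_n$ with zeros and poles dictated by the $u_i$ --- and one is allowed to choose the lower-level tree, equivalently to reposition the branch locus of the sub-lift, in order to make this solvable.

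The hard part is precisely this last step: producing, for arbitrary upper breaks, a Hurwitz tree realizing them. There are two obstructions to contend with. One is a ``global'' obstruction built from a sum of local differential contributions, in the tradition of Bertin's obstruction; it vanishes automatically when $G$ is cyclic. The other is genuinely combinatorial-analytic --- one needs enough ``room'' on the tree to accommodate the required pole orders of $\omega_n$ --- and this is the point that resisted for decades. Obus--Wewers (\cite{OW:ce}) settle it whenever the ramification filtration is sufficiently simple (e.g.\ essentially one jump, which in particular reproves and extends the cases $n = 1, 2$), and Pop (\cite{Po:oc}) establishes the required existence in full generality. Feeding the resulting local lifts back into the local-global principle yields the global lift, proving the theorem. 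We note that, because one generally must enlarge $R$ along the way, the lift is produced over \emph{some} discrete valuation ring rather than over $W(k)$ itself --- which is exactly what the statement asserts.
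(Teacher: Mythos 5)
Your global-to-local reduction and the further reduction to $G = \ints/p^n$ match the paper's route (Theorem \ref{Tlocalglobal}, with the tame part handled as in Remark \ref{Rgeneralcyclic}), and your base case $n=1$ is Example \ref{Ezp}. The problem is the inductive step. As you describe it, one is ``handed a lift of the $\ints/p^{n-1}$-quotient action and must extend it to a $\ints/p^n$-cover''; but extending an \emph{arbitrary} given lift of the subextension is precisely Conjecture \ref{Coorttower} (liftability in towers), which is open. The Obus--Wewers induction succeeds only when the subextension is lifted in a specially prepared way (branch points of high enough valuation) \emph{and} under a hypothesis on the upper jumps --- ``no essential ramification,'' $u_{i+1} < pu_i + p$ (Theorem \ref{Tcyclicnoessram}). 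Likewise, your claim that Pop ``establishes the required existence in full generality'' of a Hurwitz tree realizing arbitrary upper breaks is not what \cite{Po:oc} does; no direct construction of lifts (or of the corresponding differential Hurwitz trees) for arbitrary jumps is known.

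The missing idea is the equicharacteristic deformation step (the ``Mumford method,'' \S\ref{Smumford}). Pop's actual contribution is Proposition \ref{Pequicharexamples}(i): every local $\ints/p^n$-extension deforms \emph{within characteristic $p$} to one whose generic fiber has no essential ramification, combined with the descent statement (Theorem \ref{Tequicharsetup}) that lifting the generic fiber of such a deformation forces the original extension to lift. Composing this with Theorem \ref{Tcyclicnoessram} is exactly the proof of Theorem \ref{Tmainequichar}(i). Without this detour through an equal-characteristic deformation, your induction stalls precisely where the subject stalled for decades: at ramification filtrations with $u_{i+1} \geq pu_i + p$.
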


In \S\ref{Sglobal}, we discuss a number of basic examples of the
lifting problem.  It turns out that determining whether the lifting problem
can be solved reduces to a \emph{local lifting problem}
on extensions of power series rings (Question \ref{Qlocallifting}).
This reduction is thanks to a \emph{local-global principle}, which we
state and prove in \S\ref{Slocalglobal}.  

The local lifting problem is the main approach to the lifting problem today.  In \S\ref{Sgeneralities}, we define the local
lifting problem, and give some obstructions to solving it.  In
\S\ref{Ssummary}, we summarize the positive results for the
local lifting problem known at the present, which is followed by
\S\ref{Stechniques}, where we give an overview of the
various techniques that are used to construct lifts.  Although,
historically, positive results for the local lifting problem were
discovered before the discovery of systematic obstructions, we feel that it may be
beneficial for the reader to first have a limited set of cases of the
local lifting problem to think about, before seeing what is known.
The interested reader can certainly skip to \S\ref{Ssummary} right
after reading the introduction to \S\ref{Sgeneralities}.

In \S\ref{Sdeformations}, we sketch the deformation-theoretic approach
to the local lifting problem, which aims not only to solve it, but to
understand what the space of solutions looks like (this section
assumes familiarity with general deformation theory, and is written at
a somewhat higher level of sophistication than the rest of the chapter).  We close with
\S\ref{Sopen}, which is a list of open problems.  Appendix
\ref{Salgebraic} includes some algebraic results that are used in
our proofs.

\subsection{A remark on this exposition}
The paper \cite{Ob:ll} is an earlier exposition that I wrote on
the local lifting problem.  In what follows, I have attempted to minimize
duplication of \cite{Ob:ll} and to cite it for proofs whenever
possible.  At the same time, I have striven to keep notation as
identical as possible to that of \cite{Ob:ll}.  
This chapter contains a great deal of material not
present in \cite{Ob:ll}, such as a proof of the
local-global principle, the (differential) Hurwitz tree obstruction, the ``Mumford method'' of using
equicharacteristic deformations to build lifts and its application in
the resolution of the Oort conjecture, all the deformation-theoretic
material, several results of the last few years, and many examples and
open problems.  There is also quite a bit of material in \cite{Ob:ll}
not included here, in particular a detailed account of lifting for
cyclic extensions.  That being said, this
chapter has the same basic structure as \cite{Ob:ll}, and to make it
readable on its own, some repetition is necessary. In some sense, 
this chapter and \cite{Ob:ll} are companion papers that
can be read together if desired. In particular, I will often give a
reference to \cite{Ob:ll}, even when a result is originally from
another source (which I will also cite).

In order to keep the level of exposition relatively basic, we do not
include details on differential/deformation data, although they
are mentioned in \S\ref{Shurwitzobstruction}.

\subsection{Notation and conventions}\label{Snotation} 
Throughout this paper, $p$ represents a (fixed) prime number, $k$ is (unless
otherwise noted) an algebraically closed field of characteristic $p$, and $W(k)$ is
the ring of Witt vectors of $k$, that is, the unique complete discrete valuation ring in characteristic zero with uniformizer $p$
whose residue field is $k$ (see, e.g., \cite[II, \S6]{Se:lf}).  

If $\Gamma$ is a group of automorphisms of a ring $A$, we write $A^{\Gamma}$ for the fixed ring under $\Gamma$.
For a finite group $G$, a \emph{$G$-Galois extension} (or \emph{$G$-extension}) of rings is a finite
extension $A \hookrightarrow B$ (also written $B/A$) of integrally closed integral domains such that the associated extension of fraction fields is 
$G$-Galois.  We do \emph{not} require $B/A$ to be \'{e}tale.
 
If $x$ is a scheme-theoretic point of a scheme $X$, then $\mc{O}_{X,x}$ is the local ring of $x$ in $X$.  
If $R$ is any local ring, then $\hat R$ is the completion of $R$ with respect to its maximal ideal. 
A \emph{$G$-Galois cover} (or \emph{$G$-cover}) is a branched cover
$f: Y \to X$ with an isomorphism $G \cong \Aut(Y/X)$ such that $G$ acts transitively on 
each geometric fiber of $f$.  Note that $G$-covers of affine schemes give rise to $G$-extensions of rings, and vice versa.

Suppose $f: Y \to X$ is a branched cover, with $X$ and $Y$ locally noetherian.  If $x \in X$ and $y \in Y$ are smooth codimension $1$ points such that 
$f(y) = x$, then the \emph{ramification index} of $y$ is the ramification index of the extension of complete local rings 
$\hat{\mc{O}}_{X, x} \to \hat{\mc{O}}_{Y, y}$.  If $f$ is Galois, then
the ramification index of a smooth codimension $1$ point $x \in X$ 
is the ramification index of any point $y$ in the fiber of $f$ over $x$.  If $x \in X$ (resp.\ $y \in Y$) has 
ramification index greater than 1, then it is called a \emph{branch point} (resp.\ \emph{ramification point}).

If $R$ is any ring with a non-archimedean absolute value $| \cdot |$, then $R\{T\}$ 
is the ring of power series $\sum_{i=0}^{\infty} c_i T^i$ such that
$\lim_{i \to \infty} |c_i| = 0$.  Throughout the paper, we normalize 
the valuation on $R$ and $K$ so that $p$ has valuation $1$.

If $X$ is a smooth curve over a complete discrete valuation field $K$ with valuation ring $R$, then a \emph{semistable}
model for $X$ is a relative curve $X_R \to \Spec R$ with $X_R \times_R K \cong X$ and semistable special fiber (i.e.,
the special fiber is reduced with only ordinary double points for singularities).  

If $R$ is a discrete valuation ring with residue field $k$ and
fraction field $K$, and $A$ is an $R$-algebra, we write $A_k$ and
$A_K$ for $A \otimes_R k$ and $A \otimes_R K$, respectively. 

Suppose $S$ is a ring of characteristic zero, with an ideal $I$ such that $S/I$ has characteristic $p$.  If an indeterminate in $S$ is given by a capital
letter, our convention (which we will no longer state explicitly) will be to write its reduction in $S/I$ using the respective lowercase letter.  
For example, if $I \subseteq W(k)[[U]]$ is the ideal generated by $p$, then $W(k)[[U]]/I \cong k[[u]]$, and $u$ is the reduction of $U$. 

The group $D_n$ is the dihedral group of order $2n$.  The symbol
$\zeta_p$ represents a primitive $p$th root of unity.  The genus of a
curve $X$ is written $g(X)$.

\section{Global results}\label{Sglobal}
In this section, $f: Y \to X$ is a branched $G$-Galois cover of smooth
projective curves over $k$, where $G$ is a finite group.  Recall that
the lifting problem asks if there is a characteristic zero discrete valuation ring $R$
with residue field $k$ and a branched $G$-Galois cover $f_R: Y_R \to X_R$ of
smooth relative $R$-curves whose special fiber is $f$.  

\subsection{Tame covers}\label{Stame}
As we have seen in Remark \ref{Rcurvelifting} (based on Theorem
\ref{Ttamelifting}), if $R$ is \emph{complete} and $f$ is furthermore
tamely ramified, then $f$ lifts over $R$.  In particular, $f$ lifts to characteristic zero whenever $|G|$ is not
divisible by $p$.  It is not always easy to write down tame covers and their lifts
explicitly, but one can do this for \emph{cyclic} (or abelian) covers, as shown in
the next example.
 
\begin{example}\label{Eprimetop}
Suppose $f: Y \to \proj^1_k$ is a cyclic $\ints/n$-cover over $k$ corresponding to the
function field embedding 
$$k(t) \to k(t)[z]/(z^n - \prod_{i=1}^r (t - a_i)^{c_i})$$ for some
pairwise distinct $a_i \in k$, where $p \nmid n$ and all $c_i > 0$.  Let $\proj^1_R$ be the standard lift
of $\proj^1_k$ with coordinate $T$ reducing to $t$.  Let $R$ be a
complete characteristic zero discrete valuation ring with residue field $k$.  We claim
that a lift $f_R$ of $f$ can be given by taking the normalization $Y_R$ of $\proj^1_R$ in the function field
$$\mc{K} := K(T)[Z]/(Z^n - \prod_{i=1}^r (T - A_i)^{c_i}),$$ where $A_i$ is any lift
of $a_i$ to $R$.  Here $K = \Frac(R)$ and a generator of the Galois group takes $Z$
to $\zeta_nZ$ (note that $R$, being complete, contains $\zeta_n$).

The map $f_R$ is flat by Proposition \ref{Pflat}.  Since the normalization of $\aff^1_R$ in $\mc{K}$ contains $\Spec R[T,
Z]/(Z^n - \prod_{i=1}^r (T - A_i)^{c_i})$, it is clear that $Y_R
\times_R k$ is \emph{birationally equivalent} to $Y$.  But we must check that it is smooth (the normal
scheme $Y_R$ can, in theory, have singularities in codimension 2!).  The generic
fiber $f_K := f_R \times_R K$ is branched of index $e_i := n/\gcd(n, c_i)$
above $A_i$, so the Riemann-Hurwitz formula gives that the genus $g_{\eta}$ of $Y_R \times_R K$ satisfies
$$2g_{\eta} - 2 = -2n + \sum_{i=1}^r (n - e_i).$$  The same is true
for the genus $g_Y$ of $Y$.  Since $f_R$ is flat, the arithmetic genus $p_a(Y_R
\times_R k) = g_{\eta} = g_Y$ (e.g., \cite[III, Theorem  9.13]{Ha:ag}).  Thus $Y_R \times_R k$ is smooth (see,
e.g., \cite[IV, Exercise 1.8]{Ha:ag}).
\end{example}

\subsection{Wild covers}\label{Swild}
In stark contrast to the case of tame covers, the lifting problem for
wild covers contains a great deal of mystery.  Even in the most basic
example when a wild cover lifts, writing the lift down is less
straightforward than in Example \ref{Eprimetop}.

\begin{example}\label{Ezplift}
Let $f: Y = \proj^1 \to X = \proj^1$ be the Artin-Schreier $\ints/p$-cover over
$k$ given by the equation $z \mapsto z^p - z$, where $z$ is a coordinate
on $\proj^1$.  The Galois action is generated by $z \mapsto z+1$.  Let
$R$ be a complete discrete valuation ring with residue field $k$ containing $\zeta_p$ and let
$\lambda = \zeta_p - 1$.  A lift of $f$ to
characteristic zero is $f_R: Y_R = \proj^1_R \to X_R =
\proj^1_R$, where $f_R$ is given by 
$$Z \to \frac{(1 + \lambda Z)^p - 1}{\lambda^p}$$  
(note that this is defined over $R$, and reduces to $z \mapsto z^p -
z$ over $k$).  The $\ints/p$-Galois action on $Y_R$ is
generated by $Z \mapsto \zeta_pZ + 1$ (which has order $p$ --- the
reader should check this!), and this action reduces to $z
\mapsto z+1$ over $k$.  
\end{example}

\begin{remark}\label{Rbranchcollide}
Notice that the generic fiber of $Y_R$ above has \emph{two}
ramification points (at $Z = -1/\lambda$ and $Z = \infty$), both of
which specialize to the unique ramification point $z = \infty$ of
$Y$.  This phemomenon of a ramification point ``splitting'' into
several ramification points on a lift to characteristic zero happens whenever the
ramification point is wild (indeed, the Riemann-Hurwitz formula
necessitates this, as a wild ramification point of index $e$
contributes \emph{more} than $e-1$ to the degree of the ramification divisor).
\end{remark}

There are also examples of wild covers that do not lift to
characteristic zero, such as the following.

\begin{example}\label{Ezpzpnonlift}
Let $Y \cong \proj^1_k$. The group $G = (\ints/p)^n$ (for any $n$)
embeds into the additive group of $k$, and acts on $Y$ by the additive
action, fixing $\infty$.  Let $f: Y \to X \cong \proj^1$ be the
induced $G$-cover.   If $f_R: Y_R \to X_R$ is a lift of $f$ to
characteristic zero and $K = \Frac(R)$, then flatness of $Y_R \to
\Spec R$ implies that the generic fiber $Y_K$ of $Y_R$ is a genus zero
curve in characteristic zero with $G$-action.  However, the automorphism
group of $Y_K$ embeds into $PGL_2(\ol{K})$, which does not contain 
$(\ints/p)^n$ if $n > 1$ and $p^n \neq 4$.  So the $G$-action on $Y$
cannot lift to characteristic zero in these cases.
\end{example}

Along the same lines, and more simply, a cover
might not lift simply because the automorphism group is too large
for characteristic zero.  The following example is from \cite[\S4]{Ro:aa}.

\begin{example}\label{Eroquette}
Consider the smooth projective model $Y$ of the curve $y^2 = x^p - x$ over
$k$, where $p \geq 5$.  The genus $g_Y$ of $Y$ is $(p-1)/2$.  The
group $G := \Aut(Y)$ is generated by
\begin{eqnarray*}
&\sigma:& x \mapsto x+1, \quad y \mapsto y \\
&\tau:& x \mapsto ax \quad y \mapsto \sqrt{a}y \text{ ($a$ a generator
  of $\FF_p^{\times}$)}\\
&\upsilon:& x \mapsto -\frac{1}{x}, \quad  y \mapsto \frac{y}{x^{(p+1)/2}}
\end{eqnarray*}
This group contains $PGL_2(p)$ as an index $2$ subgroup (considering
only the action on $x$), so $|G| = 2p(p^2-1)$.  If the $G$-cover $f: Y \to
Y/\Aut(Y)$ lifted to characteristic zero, the generic fiber of the
lift would be a genus $g_Y$ curve with at least $|G|$ automorphisms.
Since $|G| > 84(g-1)$, this violates the Hurwitz bound on
automorphisms of curves in characteristic zero (see, e.g., \cite[IV,
Ex.\ 2.5]{Ha:ag}).  Indeed, this is the case even when $G$ is replaced
by its index 2 subgroup $PGL_2(p)$.
\end{example}

Lastly, here is an example from \cite[\S2]{Oo:la} of a wild cover that is non-liftable for reasons
other than the size of the automorphism group.

\begin{example}\label{Efrobenius}
Suppose $p = 5$, and let $G$ be the group of order $20$ with presentation $\langle \sigma,
\tau \,  | \, \sigma^5 =1, \tau^4 =1, \sigma\tau = \tau \sigma^{-1}
\rangle$.  
Let $f: Y \to X = \proj^1$ be the $G$-cover
corresponding to the embedding of function fields $k(t)
\hookrightarrow k(t)[x, y]/(x^4-t, y^5 - y - x^{-2})$, where the
$G$-action is given by 
$$\sigma^*y = y+1, \ \sigma^*x = x, \ \tau^*y = 4y, \ \tau^*x = 3x.$$  If $Z$
corresponds to the subfield $k(t, x)$, then $Z \to X$ is a $\ints/4$-cover of genus zero curves ramified at $x = 0$ and $x = \infty$, and
$Y \to Z$ is an Artin-Schreier $\ints/5$-cover branched only at $x =
0$.  If $P \in Y$ is the point above $x = 0$, then $xy^2$ is a
uniformizing parameter at $P$, and the ramification divisor $D$ of $Y \to
Z$ can be calculated by taking 
$$\frac{dx}{d(xy^2)} = -\frac{dx}{2xy\, dy} = -x^2/y,$$ whose divisor
has $P$-part $12[P]$.  Thus $D = 12[P]$, and the Riemann-Hurwitz
formula shows that the genus of $Y$ is $2$.

Now, suppose that $f$ has a lift $f_R: Y_R \to X_R$ over a discrete valuation ring $R$ in
characteristic zero.  There is an intermediate $\ints/5$-cover $Y_R
\to Z_R$ lifting $Y \to Z$.  By flatness, the generic fiber $Y_K \to
Z_K$ over $K := \Frac(R)$ is a $\ints/5$-cover of a genus $0$ curve by
a genus $2$ curve, and $Z_K \to X_K$ is a $\ints/4$-cover of genus
zero curves.  Since all ramification points of $Y_K \to Z_K$
have ramfication index $5$, the Riemann-Hurwitz formula yields 
$$2(2) - 2 = 5(-2) + 4r,$$ where $r$ is the number of branch 
points.  Thus $r = 3$, and these three points are permuted by the
$\ints/4$-action on $Z_K$.  
Since $Z_K \to X_K$ is a cover of genus zero curves, this action is
free apart from two fixed points.  So no set of three points is stable
under this action, yielding a contradiction.
\end{example}

The examples above motivate the following obstruction to lifting,
known as the \emph{Katz-Gabber-Bertin} (or \emph{KGB}) obstruction
(cf.\ \cite[\S1]{CGH:ll}, where the definition is given in a slightly
different context).

\begin{definition}{\textbf{(The KGB Obstruction)}}\label{DglobalKGB}
If $f: Y \to X$ is a branched $G$-cover of smooth projective curves
over $k$, then there is \emph{KGB obstruction} to lifting $f$ if there exists
no curve $C$ in characteristic zero with faithful $G$-action such that, for all $H \leq G$, the
genus of $C/H$ equals the genus of $Y/H$.  If there does exist such a
curve, we say that \emph{the KGB obstruction vanishes}.
\end{definition}

\begin{remark}
The KGB obstruction above motivates the \emph{local KGB obstruction}
(Definition \ref{DKGB}), which will be the form we primarily use. 
\end{remark}

By flatness, it is clear that if $f$ has a KGB obstruction to lifting,
then it does not lift to characteristic zero.  Furthermore, the KGB
obstruction explains the failure of lifting in Examples
\ref{Ezpzpnonlift}, \ref{Eroquette}, and \ref{Efrobenius}.  

\begin{remark}\label{Robstructions}
In general, obstructions to the lifting problem come from looking at
all conceivable lifts of characteristic $p$ branched covers with certain
properties, seeing what properties the lifts would have, and attempting to encode
these properties in an abstract structure.  There will then be an
obstruction to lifting if this abstract structure can't exist.  As we
have seen, the KGB obstruction rests on the simple observation that a
lift of a curve with a given genus has the same genus.  The
abstract structure in this case can be viewed as a function from subgroups $H \subseteq G$ to integers (representing the
genus of $Y/H$).  This function can exist only when it obeys the
constraints given by the Riemann-Hurwitz formula in characteristic zero. 
\end{remark}

\subsection{Oort groups and the Oort conjecture}\label{Soort}

Remark \ref{Rcurvelifting} has the consequence that every $G$-Galois
cover $f: Y \to X$ of smooth projective curves over $k$, where $p \nmid |G|$, lifts to
characteristic zero.  This motivates the natural question of which
groups $G$ have the property that all $G$-covers lift.  After \cite{CGH:og}, we call a finite
group $G$ with this property an \emph{Oort group for $p$} (we
sometimes suppress $p$ when it is implicit).  Thus prime-to-$p$
groups are Oort groups.  By the examples of \S\ref{Swild},
$(\ints/p)^n$ is not an Oort group when $n > 1$ and $p^n > 4$, the
group $PGL_2(p)$ is not an Oort group for $p > 3$, and the group of
order $20$ with presentation $\langle \sigma,
\tau \,  | \, \sigma^5 =1, \tau^4 =1, \sigma\tau = \tau \sigma^{-1}
\rangle$ is not an Oort group for $5$.  From this, one can easily obtain more
examples of non-Oort groups (for instance, it is an exercise
to show that any direct product of a non-Oort group with another group
will be a non-Oort group).  

On the other hand, it is generally \emph{not} easy to prove that a group is an Oort
group.   The first major result for a group with
order divisible by $p$ is due to Sekiguchi-Oort-Suwa.

\begin{theorem}[\cite{SOS:ask}]\label{Tzplifting}
If $G$ is a cyclic group of order $mp$, with $p \nmid m$, then $G$ is
an Oort group.
\end{theorem}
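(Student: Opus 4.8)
The plan is to reduce the global statement to the local lifting problem (Question~\ref{Qlocallifting}) by means of the local-global principle (Theorem~\ref{Tlocalglobal}), and then to solve the local problems by explicit construction. By the local-global principle, a $G$-cover $f\colon Y\to X$ lifts over a complete discrete valuation ring $R$ with residue field $k$ provided that, for each ramification point $y$ of $f$, the extension $\hat{\mc O}_{Y,y}/\hat{\mc O}_{X,f(y)}$ with its decomposition group lifts over $R$. Since $G=\ints/mp$ is cyclic, each decomposition group is cyclic of order $d\mid mp$; write $d=d'p^{\varepsilon}$ with $d'\mid m$, $p\nmid d'$, and $\varepsilon\in\{0,1\}$. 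If $\varepsilon=0$ the local extension is tame and lifts over every complete $R$ by Grothendieck's theorem (Theorem~\ref{Ttamelifting}; cf.\ Example~\ref{Elocaltame}). Since there are only finitely many local extensions to handle and one is always free to enlarge $R$, it remains to lift, over one sufficiently large $R$, a wild local extension with cyclic Galois group $\ints/d'p$.

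So fix such an extension $k[[z]]/k[[t]]$ with Galois group $\ints/d'p$. First I would pin down its shape: the subextension fixed by the $p$-Sylow subgroup $\ints/p$ is a tame cyclic $\ints/d'$-extension of $k[[t]]$, hence --- $k$ being algebraically closed --- isomorphic to $k[[s]]/k[[t]]$ with $t=s^{d'}$ and $\ints/d'$ acting by $s\mapsto\zeta_{d'}s$; the top extension $k[[z]]/k[[s]]$ is Artin--Schreier, say $z^p-z=f(1/s)$ with $f$ a polynomial in $1/s$ of prime-to-$p$ degree. Putting $f$ in Artin--Schreier-reduced form and using that $\ints/d'$ centralizes the $\ints/p$-action, a short computation modulo the image of $g\mapsto g^p-g$ forces every exponent appearing in $f$ to be divisible by $d'$; hence $f(1/s)=\tilde f(1/t)$ with $\tilde f$ a polynomial in $1/t$ of prime-to-$p$ degree. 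Thus the given extension is, up to isomorphism, the compositum over $k[[t]]$ of the tame extension $t=s^{d'}$ and the $\ints/p$-extension $z^p-z=\tilde f(1/t)$, and these are linearly disjoint since their degrees are coprime.

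This reduces everything to lifting the single $\ints/p$-extension $z^p-z=\tilde f(1/t)$ of $k[[t]]$ --- the local Oort conjecture for $\ints/p$. For this I would invoke the Sekiguchi--Oort--Suwa theory of deformations of Artin--Schreier covers to Kummer covers \cite{SOS:ask}: over a suitably ramified finite extension $R$ of $W(k)$ there is a smooth affine group scheme $\mc G$ with generic fiber isomorphic to $\ints/p$ (via $\zeta_p\in R$, in the spirit of the explicit lift in Example~\ref{Ezplift}) and special fiber $\ints/p$, together with a deformation of the given Artin--Schreier $\ints/p$-torsor to a $\mc G$-torsor over a lift of $\Spec k[[t]]$. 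Taking $R$ also to contain $\zeta_{d'}$, one recovers the wild local $\ints/d'p$-lift either by forming the compositum of this $\ints/p$-lift with the Kummer cover $S^{d'}=T$, or equivalently by deforming a torsor under $\mc G\times_R\mu_{d'}$. Flatness of the resulting model, and the fact that its special fiber is exactly $k[[z]]/k[[t]]$, then follow from the standard comparison of conductors and arithmetic genus, exactly as in Example~\ref{Eprimetop} (using Proposition~\ref{Pflat}).

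The genuinely hard step is the construction of $\mc G$ and of the deforming torsor, and the verification that the torsor's reduction is the prescribed Artin--Schreier extension with the correct conductor --- concretely, that an explicit degeneration of a degree-$p$ Kummer equation (generalizing $Z\mapsto((1+\lambda Z)^p-1)/\lambda^p$ with $\lambda=\zeta_p-1$) specializes to $z^p-z=\tilde f(1/t)$, the additional characteristic-zero branch points all collapsing onto $t=0$ as in Remark~\ref{Rbranchcollide}. Granting this, one assembles the local lifts --- all over a common $R$ obtained by taking the compositum of the finitely many rings needed --- and the local-global principle then produces the desired lift $f_R\colon Y_R\to X_R$, so $\ints/mp$ is an Oort group.
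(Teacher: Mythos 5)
Your proposal is correct and follows exactly the route the paper itself lays out for this theorem: reduce to the local lifting problem via Theorem \ref{Tlocalglobal}, split the cyclic inertia into its tame part and its $\ints/p$ part (abelianness forcing the reduced Artin--Schreier function on $k((s))$ to descend to $k((t))$), and lift by composing the explicit Kummer equation of Example \ref{Ezp} with $S^{d'}=T$, as indicated in Remark \ref{Rgeneralcyclic}. The only slip is cosmetic: the verification that the compositum of birational lifts is an honest lift should invoke the different criterion (Proposition \ref{Pdifferent}), as in Example \ref{Ezpzp}, rather than the arithmetic-genus comparison of Example \ref{Eprimetop}, which is the global tame tool.
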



Around the same time, Oort (\cite[\S7]{Oo:la}) made the statement that 
``it seems reasonable to expect that [lifting] is possible for every
automorphism of an algebraic curve."  This is
equivalent to saying that all cyclic groups are Oort groups (for all
$p$).  This statement has come to be known as the \emph{Oort conjecture},
and has driven much of the progress to date in the lifting problem.
The Oort conjecture was recently proven by the combined work of
Obus-Wewers (\cite{OW:ce}) and Pop (\cite{Po:oc}).  The proof makes
heavy use of the \emph{local-global principle}, which is the subject
of the next section.  In fact, the local-global principle underlies virtually all progress on the lifting
problem since \cite{SOS:ask}.  Thus, we will discuss it before saying anything further
about Oort groups.

\section{The local-global principle}\label{Slocalglobal}
In this section, $R$ is a \emph{complete} characteristic zero discrete valuation ring with
residue field $k$.

Suppose a finite group $G$ acts on a smooth, projective curve $Y/k$,
giving rise to a $G$-cover $f: Y \to X := Y/G$.  
Let $y \in Y$ be a closed point, and let $I_y \leq G$ be the inertia group of $y$.  The
$G$-action on $Y$ induces an $I_y$-action on the complete local
ring $\hat{\mc{O}}_{Y, y}$, which is isomorphic to a power series ring
$k[[z]]$, since $Y$ is smooth (Lemma \ref{Lcohen}).  The fixed ring
$\hat{\mc{O}}_{Y,y}^{I_y}$ is the complete local ring $\hat{\mc{O}}_{X, f(y)}$, so is also a power series ring in one variable, say
$k[[t]]$.

Now, suppose further that $f: Y \to X := Y/G$ lifts to a
characteristic zero $G$-cover $f_R: Y_R \to X_R$.  We identify the
special fiber of $f_R$ with $f$.  Then each $y
\in Y$ as above gives rise to an extension of complete local rings $\hat{\mc{O}}_{Y_R,
  y}/\hat{\mc{O}}_{X_R, f(y)}$.  The group $I_y$ acts on
$\hat{\mc{O}}_{Y_R, y}$, this action lifts the action of $I_y$ on
$\hat{\mc{O}}_{Y, y}$, and $\hat{\mc{O}}_{X_R, f(y)} =
\hat{\mc{O}}_{Y_R, y}^{I_y}$.  In fact, there exist isomorphisms $\hat{\mc{O}}_{Y_R,
  y} \cong R[[Z]]$ and $\hat{\mc{O}}_{X_R, f(y)} \cong R[[T]]$, where
$Z$ and $T$ can be chosen to be any lifts of $z$ and $t$ (Lemma \ref{Llocalring}). Thus, it makes sense to say that the
$I_y$-extension $R[[Z]]/R[[T]]$ is a \emph{lift of the $I_y$-extension
$k[[z]]/k[[t]]$ to characteristic zero}.  In other words, {\bf any lift of
the $G$-cover $f$ over $R$ gives rise to a lift
of the $I_y$-extension $\hat{\mc{O}}_{Y, y}/\hat{\mc{O}}_{X, f(y)}$
over $R$}.

Amazingly, the converse of the above statement holds as well!
This is called the \emph{local-global principle} for lifting. 

\begin{theorem}{\rm \textbf{(Local-global principle)}}\label{Tlocalglobal}
Let $f: Y \to X$ be a $G$-cover of smooth, projective, $k$-curves.
For each closed point $y \in Y$, let $I_y \leq G$ be the inertia group.
If, for all $y$, the $I_y$-extension $\hat{\mc{O}}_{Y, y} / \hat{\mc{O}}_{X, f(y)}$ lifts over
$R$, then $f$ lifts over $R$.
\end{theorem}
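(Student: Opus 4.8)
The plan is to build the lift $f_R: Y_R \to X_R$ by gluing together a lift of the ``generic'' (tame or étale) part of $f$ with the prescribed local lifts at the ramification points, all over a fixed smooth lift $X_R$ of the base curve $X$. First I would choose a smooth projective lift $X_R$ of $X$ over $R$, which exists by Remark~\ref{Rcurvelifting}, and lift each branch point $x_i$ of $f$ to a section $x_{i,R}$ of $X_R \to \Spec R$ using the valuative criterion of properness. Over the tame (indeed, away from the wild branch locus one can arrange étale) open complement $U_R = X_R \setminus \{x_{1,R}, \dots, x_{m,R}\}$, the restriction of $f$ lifts to a $G$-cover $f_R^\circ: Y_R^\circ \to U_R$ by Grothendieck's tame lifting (Theorem~\ref{Ttamelifting}), and this lift is unique once the branch locus is fixed. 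Simultaneously, at each wild branch point $x_i$ the hypothesis supplies, for $y$ lying over $x_i$, an $I_y$-equivariant lift $R[[Z]]/R[[T]]$ of $\hat{\mc O}_{Y,y}/\hat{\mc O}_{X,x_i}$; inducing this $I_y$-extension up to $G$ via the disjoint union $G \times_{I_y} \Spec R[[Z]]$ gives a local $G$-cover of $\Spec \hat{\mc O}_{X_R, x_{i,R}} \cong \Spec R[[T]]$ whose special fiber is (the completion of) $f$ near the fiber over $x_i$.

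The key step is then to glue these two kinds of data along the punctured formal neighborhoods. On the overlap $\Spec R[[T]] \setminus \{T = 0\} \otimes_R K$ — or more precisely over the field of fractions of $R[[T]]$, or over a small punctured disk — both $f_R^\circ$ and the induced local cover restrict to tame $G$-covers lifting the \emph{same} tame $G$-cover of the punctured formal neighborhood of $x_i$ in $X$. By the uniqueness clause in Theorem~\ref{Ttamelifting} (tame covers over $R$ are determined by their special fiber once the branch locus is fixed), these two restrictions are canonically isomorphic, and one checks the isomorphism is $G$-equivariant. This produces patching isomorphisms over each punctured formal disk, and by formal patching (Grothendieck's existence theorem / formal GAGA, in the form used for constructing covers — e.g.\ the Ferrand–Raynaud or Harbater–style formal gluing of $G$-covers) one obtains a $G$-cover $f_R: Y_R \to X_R$ restricting to $f_R^\circ$ over $U_R$ and to the prescribed local covers over each $\Spec \hat{\mc O}_{X_R, x_{i,R}}$.

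It remains to verify that $Y_R$ is what we want: that $f_R$ is flat (hence $Y_R \to \Spec R$ is flat), that its special fiber is isomorphic to $f$ as a $G$-cover, and that $Y_R$ is a \emph{smooth} relative curve. Flatness and the identification of the special fiber follow from the construction, since the special fibers of all the glued pieces are the corresponding pieces of $f$ and patching is compatible with base change to $k$. For smoothness of $Y_R$: the generic fiber $Y_K$ is a smooth curve (it is finite over the smooth curve $X_K$ and normal), and one computes, using Riemann–Hurwitz on $Y_K \to X_K$ together with the conductor/different of each local lift $R[[Z]]/R[[T]]$, that $g(Y_K)$ equals the arithmetic genus of the special fiber $Y$; since $f_R$ is flat, $p_a(Y_R \otimes_R k) = g(Y_K)$, forcing the special fiber to be smooth (cf.\ the genus argument in Example~\ref{Eprimetop}), so $Y_R$ is a smooth relative $R$-curve.

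The hard part will be the patching step: one must ensure that the formal gluing is being performed in a category where $G$-covers can genuinely be reconstructed from compatible data on a Zariski-open part and formal neighborhoods of the remaining points — this is exactly the content of formal patching for covers, and it requires $R$ to be complete (which is our standing assumption in this section). A secondary subtlety is matching the $G$-action across the glue: the local lifts are only given as $I_y$-extensions, so one must carefully keep track of the induced $G$-structure and check that the tame uniqueness isomorphism on the overlap respects it, rather than merely being an abstract isomorphism of covers. For a detailed treatment along these lines see \cite[Theorem~3.4]{Ob:ll} and the references therein.
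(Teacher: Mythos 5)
Your proposal is correct and follows essentially the same route as the paper: a formal \'{e}tale lift over the complement of the branch locus, the induced $G$-covers $\Ind_{I_y}^G R[[Z]]$ over the formal disks at the branch points, a gluing isomorphism over the boundary $\Spec R[[T]]\{T^{-1}\}$ coming from uniqueness of unramified lifting over a complete base (which the paper carries out explicitly via Hensel's lemma in Proposition \ref{Ppatching}, including the $G$-equivariance check you flag), and algebraization by Grothendieck's Existence Theorem. The only substantive difference is your closing Riemann--Hurwitz/genus computation, which is redundant: the patching construction identifies the special fiber of $Y_R$ with $Y$ itself, so smoothness of the special fiber (and hence of $Y_R$ over $R$) is automatic.
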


\begin{remark}
If $y$ is \emph{unramified} in $f$ above, then $I_y$ is trivial, so the
extension $\hat{\mc{O}}_{Y, y} / \hat{\mc{O}}_{X, f(y)}$ lifts
automatically.  Thus, in light of Theorem \ref{Tlocalglobal}, one need
only check the (finitely many) ramified points $y \in Y$ in order to
show that $f$ lifts to characteristic zero.
\end{remark}

Our proof of the local-global principle uses \emph{formal patching} 
in the spirit of \cite[\S1.2]{Sa:fl} and \cite[\S3]{He:thesis} (see \cite{Ha:pg} for a detailed introduction to patching,
and \cite[II.9]{Ha:ag} for an introduction to formal schemes).  The idea
is as follows: Suppose we are given a branched Galois cover $f: Y \to X$ and lifts of
the Galois covers $\hat{\mc{O}}_{Y, y} / \hat{\mc{O}}_{X, f(y)}$ over
$R$ for all ramification points $y$.  Let $U \subseteq X$ and $V
\subseteq Y$ be the complements of the branch and ramification loci,
respectively.  The cover $f: V \to U$ is \'{e}tale, and thus admits a
lift to a formal scheme over $R$ due to Grothendieck's theory of \'{e}tale
morphisms.  The individual Galois covers $\hat{\mc{O}}_{Y, y} /
\hat{\mc{O}}_{X, f(y)}$ admit lifts over $R$ by assumption.  These can
be ``patched'' together to create a Galois branched cover of proper
formal schemes $\mc{F}: \mc{Y} \to \mc{X}$ over $R$.  Grothendieck's Existence
Theorem (also known as ``formal GAGA'') shows that $\mc{F}$ is
actually the formal completion of a $G$-Galois branched cover of $R$-curves $f_R:
Y_R \to X_R$, which is the lift we seek.

The rest of \S\ref{Slocalglobal} will be devoted to the details of
this proof.

\subsection{Preliminaries and \'{e}tale lifting}\label{Sformal}
We start with two basic lemmas on formal lifting.

\begin{lemma}\label{Lsinglecurveformallift}
Let $X$ be a smooth curve over $k$ and let $R$ be a complete discrete valuation ring with
residue field $k$.  There exists a smooth formal
$R$-curve $\mc{X}$ whose special fiber is $X$.  Furthermore, if $U
\subset X$ is an open subscheme, there is a smooth formal $R$-curve
$\mc{U} \subseteq \mc{X}$ whose special fiber is $U$.
\end{lemma}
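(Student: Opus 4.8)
The plan is to construct $\mathcal{X}$ as the inductive limit of a tower of infinitesimal deformations of $X$, the point being that all obstructions vanish because $X$ is one-dimensional. Since $R$ is complete, write $R_n = R/\mathfrak{m}^{n+1}$, so that $R = \varprojlim_n R_n$ and $R_0 = k$. I would build, by induction on $n$, smooth $R_n$-schemes $X_n$ with $X_0 = X$, equipped with isomorphisms $X_n \times_{R_n} R_{n-1} \cong X_{n-1}$. Given $X_n$, the obstruction to the existence of such an $X_{n+1}$ is a class in $H^2(X, T_{X/k}) \otimes_k (\mathfrak{m}^{n+1}/\mathfrak{m}^{n+2})$, where $T_{X/k}$ is the tangent sheaf; this is the standard obstruction class in the deformation theory of smooth schemes. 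As $X$ is a curve, Grothendieck vanishing (\cite[III, Theorem 2.7]{Ha:ag}) gives $H^2(X, T_{X/k}) = 0$, so the obstruction vanishes and the tower exists. (Each $X_n$ has the same underlying topological space as $X$, being a nilpotent thickening.) Setting $\mathcal{X} := \varinjlim_n X_n$, with structure sheaf $\varprojlim_n \mathcal{O}_{X_n}$, produces a Noetherian formal scheme over $\Spf R$ with special fiber $X_0 = X$ (see \cite[II.9]{Ha:ag} for the formal-scheme formalism), and it is a smooth formal $R$-curve because smoothness of $\mathcal{X} \to \Spf R$ is tested on the truncations $X_n \to \Spec R_n$, which are smooth of relative dimension one.

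For the second assertion, let $U \subseteq X$ be open. Since each $X_n$ has underlying space $|X|$, the open set $U$ cuts out an open subscheme $U_n \subseteq X_n$, and these assemble to an open formal subscheme $\mathcal{U} := \varinjlim_n U_n \subseteq \mathcal{X}$. An open formal subscheme of a smooth formal $R$-curve is again a smooth formal $R$-curve, and the special fiber of $\mathcal{U}$ is visibly $U$; thus $\mathcal{U} \subseteq \mathcal{X}$ is the required object, and the containment is automatic.

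The only input beyond formal bookkeeping is the vanishing of the deformation obstruction, which for curves is immediate; so I expect no genuine obstacle. Alternatively, when $X$ is projective one can bypass deformation theory altogether: lift $X$ to a smooth projective relative $R$-curve $X_R$ using the classical fact quoted in Remark \ref{Rcurvelifting}, let $\mathcal{X}$ be the formal completion of $X_R$ along its special fiber, and then reduce the general case to this one by writing $X = \bar X \setminus \{x_1, \dots, x_m\}$ for a smooth projective completion $\bar X$ and passing to the corresponding open formal subscheme — with the ``furthermore'' statement handled exactly as above, since $U$ is then also open in $\bar X$. Either way the lemma is essentially a packaging of standard facts about formal schemes together with the triviality of obstructions on curves.
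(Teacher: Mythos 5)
Your argument is correct, but your primary route is genuinely different from the paper's. The paper simply takes a smooth projective completion $\ol{X}$ of $X$, lifts it to a smooth projective $R$-curve $\ol{X}_R$ by the result quoted in Remark \ref{Rcurvelifting}, lifts the finitely many points of $\ol{X} \setminus X$ to $R$-points by Hensel's lemma, removes them to get an algebraic model $X_R$, and defines $\mc{X}$ (and likewise $\mc{U}$) as the formal completion along the special fiber; your closing alternative is essentially this proof, with the minor cosmetic difference that you pass directly to the open formal subscheme over $X$ (resp.\ $U$) instead of first deleting lifted $R$-points. Your main argument instead builds the compatible system $X_n/R_n$ by infinitesimal deformation theory, using that the obstruction for each square-zero extension $R_{n+1} \to R_n$ lies in $H^2(X, T_{X/k}) \otimes_k (\mf{m}^{n+1}/\mf{m}^{n+2})$, which vanishes by Grothendieck vanishing on a one-dimensional noetherian space, and then takes $\mc{X} = \varinjlim X_n$. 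This is sound: it needs no compactification and no algebraization (the lemma only asks for a formal lift, so the projectivity input of Remark \ref{Rcurvelifting} --- which ultimately rests on Grothendieck existence --- is not required), and the handling of $\mc{U}$ by restriction to the open set, which is open in every nilpotent thickening $X_n$, is the same in spirit as what the paper does. What the paper's route buys is elementariness relative to its stated Hartshorne-level prerequisites (it quotes the projective lifting result as a black box and otherwise uses only Hensel's lemma and formal completion) and, incidentally, an algebraic model $X_R$ of which $\mc{X}$ is the completion; what your route buys is a self-contained construction working directly on the possibly non-proper curve, at the cost of invoking the obstruction-class formalism for deformations of smooth schemes, for which you should cite a precise reference (e.g.\ the smooth lifting theory in \cite{SGA1}, III) and note that the \v{C}ech-style obstruction argument uses separatedness of $X$, which holds here since $X$ is a curve in the paper's sense.
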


\begin{proof}
Let $\ol{X}$ be a smooth projective completion of $X$.  By Remark
\ref{Rcurvelifting}, there is a lift of $\ol{X}$ to a smooth projective
curve $\ol{X}_R$ over $R$.  By Hensel's lemma, we can lift each of the
points of $\ol{X} \backslash X$ to an $R$-point of $\ol{X}_R$.  Let
$X_R$ be the affine $R$-curve given by the complement of these
$R$-points.  Now take $\mc{X}$ to be the formal completion of $X_R$ at
$X$.  This process suffices as well for the construction of $\mc{U}$.
\end{proof}

\begin{lemma}[{\cite[I, Corollaire 8.4]{SGA1}}]\label{Letalelifting}
Let $f: Y \to X$ be an \'{e}tale cover of smooth $k$-curves and let
$R$ be a complete discrete valuation ring with residue field $k$.  Let
$\mc{X}$ be a smooth formal $R$-curve with special fiber $X$.  Then
there is a unique smooth formal $R$-curve $\mc{Y}$ with a finite map $\mc{F}: \mc{Y}
\to \mc{X}$ such that $\mc{F}$ has special fiber $f$.  If $f$ is
$G$-Galois, then so is $\mc{F}$, and the $G$-action on $\mc{Y}$ lifts
that on $Y$.
\end{lemma}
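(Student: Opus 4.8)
The plan is to build $\mc{Y}$ as a formal scheme by lifting $f$ one infinitesimal step at a time, using the fact that \'{e}tale morphisms are rigid under nilpotent thickenings, and then passing to the limit. Write $\mc{X}_n := \mc{X} \otimes_R R/\mf{m}^{n+1}$, so that $\mc{X}_0 = X$ and $\mc{X}$ is the direct limit of the chain of thickenings $\mc{X}_0 \hookrightarrow \mc{X}_1 \hookrightarrow \cdots$. Each closed immersion $\mc{X}_n \hookrightarrow \mc{X}_{n+1}$ is defined by the ideal $\mf{m}^{n+1}\mc{O}_{\mc{X}_{n+1}}$, whose square lies in $\mf{m}^{n+2}\mc{O}_{\mc{X}_{n+1}} = 0$; hence it is a square-zero thickening.

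First I would construct the system of finite level-$n$ lifts. Set $Y_0 := Y$, $f_0 := f$. Inductively, suppose we have an \'{e}tale cover $f_n : Y_n \to \mc{X}_n$ restricting to $f$ on $\mc{X}_0$. By the infinitesimal lifting property of \'{e}tale morphisms (equivalently, the topological invariance of the small \'{e}tale site under square-zero thickenings, which is exactly the content of \cite[I, Corollaire 8.4]{SGA1}), there is a \emph{unique} \'{e}tale $\mc{X}_{n+1}$-scheme $f_{n+1}: Y_{n+1} \to \mc{X}_{n+1}$ together with an isomorphism $Y_{n+1}\times_{\mc{X}_{n+1}} \mc{X}_n \cong Y_n$ of \'{e}tale $\mc{X}_n$-schemes; moreover $Y_{n+1}$ is finite over $\mc{X}_{n+1}$ because $Y_n$ is finite over $\mc{X}_n$ and finiteness descends through a thickening with nilpotent ideal (the pushforward is a coherent $\mc{O}_{\mc{X}_{n+1}}$-module by Nakayama). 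The underlying topological space of $Y_{n+1}$ equals that of $Y_n$, so the $Y_n$ assemble into a Noetherian formal $R$-scheme $\mc{Y} := \varinjlim Y_n$ with an adic morphism $\mc{F} := \varinjlim f_n : \mc{Y} \to \mc{X}$ whose special fiber is $f$. Finiteness of $\mc{F}$ follows since $\mc{F}_*\mc{O}_{\mc{Y}} = \varprojlim (f_n)_*\mc{O}_{Y_n}$ is a coherent $\mc{O}_{\mc{X}}$-module (being the inverse limit of a compatible system of coherent sheaves with surjective transition maps). Since $\mc{F}$ is \'{e}tale and $\mc{X}$ is a smooth formal $R$-curve, the composite $\mc{Y} \to \Spf R$ is smooth of relative dimension $1$, i.e.\ $\mc{Y}$ is a smooth formal $R$-curve. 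Uniqueness of $(\mc{Y}, \mc{F})$ is immediate from the uniqueness at each finite level.

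For the Galois statement, suppose $f$ is $G$-Galois, and let $\sigma \in G = \Aut(Y/X)$. Applying the uniqueness in the inductive step to the \'{e}tale cover $f_n$ and the automorphism already lifted at level $n-1$, one obtains for each $n$ a unique $\mc{X}_n$-automorphism $\sigma_n$ of $Y_n$ lifting $\sigma$. Uniqueness forces $(\sigma\tau)_n = \sigma_n\tau_n$, $(\sigma^{-1})_n = \sigma_n^{-1}$, and $\mathrm{id}_n = \mathrm{id}_{Y_n}$, so $G$ acts compatibly on the $Y_n$ and hence on $\mc{Y}$ over $\mc{X}$, lifting the action on $Y$. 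Finally, $\mc{F}_*\mc{O}_{\mc{Y}}$ is locally free of rank $|G|$ over $\mc{O}_{\mc{X}}$ (its degree equals that of $f$) and carries a $G$-action; its ring of invariants is $\mc{O}_{\mc{X}}$ because this holds after reduction modulo $\mf{m}$ (there $f$ is $G$-Galois) and lifts by flatness and Nakayama, so $\mc{F}$ is $G$-Galois.

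The main obstacle — or rather the one point demanding genuine care rather than bookkeeping — is the transition from the compatible family of finite \'{e}tale level-$n$ lifts to an honest Noetherian formal scheme $\mc{Y}$ with $\mc{F}$ \emph{finite} and \emph{adic}: one must check that $\varprojlim (f_n)_*\mc{O}_{Y_n}$ is coherent over $\mc{O}_{\mc{X}}$ and that its formation is compatible with the formal structure, which is where the Noetherian hypotheses and the exactness of inverse limits of surjective systems of coherent sheaves are used. Everything else is a formal consequence of the infinitesimal rigidity of \'{e}tale morphisms.
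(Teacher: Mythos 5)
The paper does not actually prove this lemma---it is quoted verbatim from SGA1---and your argument is precisely the standard one underlying that citation: lift the finite \'{e}tale cover uniquely through each square-zero thickening $\mc{X}_n \hookrightarrow \mc{X}_{n+1}$ via the invariance of the \'{e}tale site, assemble the levels into a finite adic morphism of Noetherian formal schemes, and use the uniqueness of the lifts to transport the $G$-action. Your proof is correct; the one step you compress slightly is that finiteness of $Y_{n+1} \to \mc{X}_{n+1}$ needs affineness first (which, like properness, depends only on the reduction since the thickening is a homeomorphism) before Nakayama for the nilpotent ideal gives module-finiteness of the pushforward.
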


\subsection{A patching result}\label{Spatching}

\begin{prop}[cf.\ {\cite[Proposition 4.1]{He:thesis}}]\label{Ppatching}
Let $R$ be a complete discrete valuation ring with residue field $k$ and uniformizer
$\pi$.  Let $\mc{X}$ be a smooth formal affine $R$-curve with special
fiber $X$.  Suppose $f: Y \to X$ is a finite, dominant, separable
morphism.  Let $x \in X$ be a closed point, let $U = X \backslash \{x\}$, let $V =
f^{-1}(U)$, and assume that $f|_V$ is \'{e}tale.  Let $\mc{U}
\subseteq \mc{X}$ be a formal lift of $U$ over $R$
as in Lemma \ref{Lsinglecurveformallift}, and let $\Phi: \mc{V} \to
\mc{U}$ be the lift of $f|_V: V \to U$ guaranteed by Lemma
\ref{Letalelifting}.  Lastly, let $A$ be a finite normal $\hat{\mc{O}}_{\mc{X},
  x}$-algebra such that there is an isomorphism $A/\pi \stackrel{\sim}{\to} \prod_{y
  \in f^{-1}(x)} \hat{\mc{O}}_{Y, y}$.

\begin{enumerate}[(i)]
\item There is a smooth formal $R$-curve $\mc{Y}$ containing $\mc{V}$
and a finite cover $\mc{F}: \mc{Y} \to \mc{X}$ such that
$\mc{F}|_{\mc{V}} = \Phi$, that the special fiber of $\mc{F}$ is $f$,
and that the restriction of $\mc{F}$ above $\Spec \hat{\mc{O}}_{\mc{X}, x}$
gives the extension $A / \hat{\mc{O}}_{\mc{X}, x}$.  

\item If $f$ is $G$-Galois, $G$ acts on $A$ with $A^G \cong
\hat{\mc{O}}_{\mc{X}, x}$, and the morphism $A \to \prod_{y \in
  f^{-1}(x)} \hat{\mc{O}}_{Y, y}$ is $G$-equivariant, then $\mc{F}$ is
$G$-Galois, and the $G$-Galois action on $\mc{Y}$ lifts that on $Y$.
\end{enumerate}
\end{prop}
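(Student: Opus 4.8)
The plan is to glue the étale lift $\Phi\colon\mc V\to\mc U$ and the local extension $A/\hat{\mc O}_{\mc X,x}$ along their common "overlap" to produce the formal cover $\mc F\colon\mc Y\to\mc X$. The technical engine is Beauville--Laszlo-style faithfully flat descent for modules (or, equivalently, the patching of coherent sheaves on $\mc X$): a coherent sheaf on the affine formal curve $\mc X$ is the same datum as a coherent sheaf on $\mc U$, a finite module over $\hat{\mc O}_{\mc X,x}$, and an isomorphism between the two after passing to the "punctured disk" $\Spec K_x$, where $K_x$ is the fraction field of $\hat{\mc O}_{\mc X,x}$ (more precisely the appropriate localization/completion away from the closed point). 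So first I would set up this gluing datum. The module $B_{\mc V}:=\Phi_*\mc O_{\mc V}$ is a coherent sheaf of algebras on $\mc U$; the module $A$ is a coherent sheaf of algebras on $\Spec\hat{\mc O}_{\mc X,x}$. On the overlap, both $\mc V$ and $\Spec A$ restrict to finite étale covers (for $\mc V$ by Lemma \ref{Letalelifting}, and for $A$ because $A/\pi\cong\prod_{y\in f^{-1}(x)}\hat{\mc O}_{Y,y}$ is étale over the punctured special fiber since $f|_V$ is étale, so $A$ is étale over $\Spec K_x$ by the local criterion together with flatness). By uniqueness of étale lifts (Lemma \ref{Letalelifting} applied over the base $\Spec K_x$, or directly Grothendieck's étale lifting since $\hat{\mc O}_{\mc X,x}$ is $\pi$-adically complete), these two restrictions are canonically isomorphic as algebras, and this canonical isomorphism is the gluing datum.

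Next I would invoke the patching/descent theorem to obtain a coherent $\mc O_{\mc X}$-algebra $\mc B$ with $\mc B|_{\mc U}\cong B_{\mc V}$ and $\mc B_x\cong A$. Set $\mc Y:=\Spec_{\mc X}\mc B$ and let $\mc F\colon\mc Y\to\mc X$ be the structure map; it is finite by construction. One then checks the asserted properties. Flatness of $\mc B$ over $\mc O_{\mc X}$ holds because it can be checked on the pieces $B_{\mc V}$ and $A$, both of which are flat (the first because $\Phi$ is étale, the second by hypothesis that $A$ is a finite normal $\hat{\mc O}_{\mc X,x}$-algebra — here I would use Proposition \ref{Pflat} from the appendix). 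The special fiber: $\mc B/\pi$ glues $B_{\mc V}/\pi=f_*\mc O_V$ on $U$ with $\prod_{y\in f^{-1}(x)}\hat{\mc O}_{Y,y}$ at $x$ along the identity overlap, which is exactly the gluing datum reconstructing $f_*\mc O_Y$; hence $\mc Y\times_R k\cong Y$ and the special fiber of $\mc F$ is $f$. Smoothness of $\mc Y$: it is flat over $R$ with smooth special fiber $Y$, so it is smooth over $R$; equivalently, $\mc Y$ is regular because it is so along $\mc V$ (lift of a smooth curve) and along $\Spec A$ ($A$ is normal and two-dimensional, hence regular, being finite flat over the regular two-dimensional ring $\hat{\mc O}_{\mc X,x}$ with the right reduction). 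Normality of $A$ is what guarantees $\mc Y$ has no spurious singularities in the fiber over $x$. Finally the restriction of $\mc F$ over $\Spec\hat{\mc O}_{\mc X,x}$ is $\Spec A\to\Spec\hat{\mc O}_{\mc X,x}$ by construction, proving (i).

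For part (ii), I would run the same construction $G$-equivariantly. The $G$-action on $\mc V$ lifting that on $V$ comes from the "if $f$ is $G$-Galois" clause of Lemma \ref{Letalelifting}; the $G$-action on $A$ is given by hypothesis; and the identifications on the overlap are $G$-equivariant because the overlap isomorphism is \emph{canonical} (unique étale lift) and both sides carry compatible $G$-actions, so uniqueness forces equivariance. Thus the gluing datum lives in the category of $\mc O_{\mc X}$-algebras with $G$-action, and $\mc B$ inherits a $G$-action; equivalently $\mc Y$ carries a $G$-action over $\mc X$ lifting the one on $Y$. To see $\mc F$ is $G$-Galois one checks $\mc B^G\cong\mc O_{\mc X}$: taking $G$-invariants is left exact and can again be tested on the two pieces, where $B_{\mc V}^G=\mc O_{\mc U}$ (as $\Phi$ is $G$-Galois) and $A^G=\hat{\mc O}_{\mc X,x}$ by hypothesis, and these glue to $\mc O_{\mc X}$; since $\mc F$ has the expected degree $|G|$ and its special fiber $f$ is $G$-Galois with $G$ acting transitively on geometric fibers, $\mc F$ is $G$-Galois and $G$ acts transitively on its geometric fibers.

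The main obstacle I expect is verifying that the overlap identification between $\Phi|_{\text{overlap}}$ and $(\Spec A)|_{\text{overlap}}$ is well-defined, canonical, and $G$-equivariant — i.e., pinning down precisely what "the overlap" is in the formal-scheme setting (the right base is $\Spec$ of the appropriate localization of $\hat{\mc O}_{\mc X,x}$ away from the closed point, which is not $\pi$-adically complete, so one must be careful about which version of étale lifting / rigidity applies) and checking that finite étale covers of it are rigid enough to force the gluing datum to be unique. Once that rigidity is in hand, everything else is bookkeeping with faithfully flat descent and the standard fact that flatness, normality/regularity, and formation of invariants can be checked locally.
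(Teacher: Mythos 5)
Your proposal is correct and follows essentially the same route as the paper: patching the \'{e}tale lift $\Phi$ and the local algebra $A$ over the ``punctured formal neighborhood'' $R[[T]]\{T^{-1}\}$ of $x$, with the gluing isomorphism pinned down by rigidity of finite \'{e}tale covers of that complete discrete valuation ring (the paper realizes this isomorphism explicitly via Hensel's lemma and monic lifts $P_i$ of the minimal polynomials $p_i$). The only content you black-box is the patching theorem itself, which the paper proves by hand as the kernel of the difference map $\theta: \mc{O}(\mc{V}) \times A \to B$, with surjectivity of $\theta$ resting on the existence of rational functions on the affine curve $V$ with prescribed principal parts.
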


\begin{proof}
We adapt the proof of Henrio.
Write $f^{-1}(x) = \{y_1, \ldots, y_n\}$.  Fix isomorphisms
$\hat{\mc{O}}_{Y, y_i} \cong k[[z_i]]$ and $\hat{\mc{O}}_{X, x} \cong
k[[t]]$. Then 
$$\hat{\mc{O}}_{Y, y_i} \cong k[[z_i]] \cong \hat{\mc{O}}_{X,
  x}[z_i]/p_i(z_i),$$
where $p_i$ is a separable monic Eisenstein polynomial with coefficients
in $\hat{\mc{O}}_{X, x} \cong k[[t]]$.   

Fix an isomorphism $\hat{\mc{O}}_{\mc{X}, x}
\cong R[[T]]$, with $T$ lifting $t$ (Lemma \ref{Llocalring}).  This fixes an isomorphism
$(\hat{\mc{O}}_{\mc{X}, x})^{\wedge}_{(\pi)} \cong R[[T]]\{T^{-1}\}$ of
complete discrete valuation rings.  Now, $B := A \otimes_{R[[T]]} R[[T]]\{T^{-1}\}$ is finite over
$R[[T]]\{T^{-1}\}$, and reducing modulo $\pi$ yields $B/\pi \cong \prod_{i=1}^n
k((z_i)) \cong \prod_{i=1}^n k((t_i))[z_i]/p_i (z_i),$ where $n = |f^{-1}(x)|$.  Let $P_i \in
R[[T]]\{T^{-1}\}[S]$ be a monic polynomial lifting $p_i$.  By
Hensel's lemma, $B$ is a product of finite $R[[T]]\{T^{-1}\}$-algebras $B_i$
($1 \leq i \leq n$) where $B_i$ contains a root $Z_i$ of $P_i$ lifting
$z_i$ and $B_i/\pi \cong k((z_i))$ as a $k((t))$-algebra.  One obtains homomorphisms
$$u_i: R[[T]]\{T^{-1}\}[Z_i]/P_i(Z_i) \to B_i.$$  This homomorphism is an
isomorphism modulo $\pi$, and is thus an isomorphism by Remark
\ref{Risom}. Note that, under the assumptions in (ii), $G$ acts on $B$ by
$R[[T]]\{T^{-1}\}$-homomorphisms.

On the other hand, let $C$ be the finite $R[[T]]\{T^{-1}\}$-algebra
given by $\mc{O}(\mc{V}) \otimes_{\mc{O}(\mc{U})} R[[T]]\{T^{-1}\}$, where 
$\mc{O}(\mc{U}) \subseteq R[[T]]\{T^{-1}\}$ is the natural inclusion.
Then $$C/\pi \cong \mc{O}(V) \otimes_{\mc{O}(U)} k((t)) \cong
\prod_{i=1}^n k((z_i)).$$ Proceeding as with $B$, we can write $C =
\prod_{i=1}^n C_i$, with isomorphisms
$$w_i: R[[T]]\{T^{-1}\}[Z_i]/P_i(Z_i) \to C_i.$$ Note that, under the
assumptions in (ii), Lemma \ref{Letalelifting} shows that $\mc{V} \to
\mc{U}$ is $G$-Galois, and thus that $G$ acts on $C$ by
$R[[T]]\{T^{-1}\}$-homomorphisms.

The maps $\mu_i := u_i \circ w_i^{-1}: C_i \to B_i$ are
$R[[T]]\{T^{-1}\}$-isomorphisms, giving rise to a product isomorphism
$\mu: C \to B$.  Then, $\mu$ modulo $\pi$ is the identity
on $\prod_{i=1}^n k((z_i))$.  Thus $\mu$ is $G$-equivariant, since roots of $P_i$ are
in one-to-one correspondence with roots of $p_i$.  

We can define an $\mc{O}(\mc{X})$-module homomorphism
$\theta: \mc{O}(\mc{V}) \times A \to B$ by 
$$\theta(h, g) = \mu(h' \otimes 1) - g \otimes 1.$$
Since one can find a rational function on an affine curve (namely, $V$) with
specified principal part at finitely many points (exercise!), the map
$\theta$ is surjective modulo $\pi$.  By Remark \ref{Risom},
$\theta$ is surjective.  Under the assumptions in (ii), $\theta$ is
clearly $G$-equivariant.

Let $\mc{A} = \ker(\theta)$, which is an
$\mc{O}(\mc{X})$-algebra.  We claim that we can take $\mc{Y} = \Spf
\mc{A}$.  Since $\mc{A}/\pi \cong \mc{O}(Y)$ is a finite
$\mc{O}(X)$-algebra, $\mc{A}$ is a finite $\mc{O}(\mc{X})$-algebra
(this follows from Lemma \ref{Lhomological} applied to a lift of a
presentation for $\mc{A}/\pi$).  Thus we obtain a finite morphism
$\mc{F}: \mc{Y} \to \mc{X}$.  Furthermore, we have $\mc{A}
\otimes_{\mc{O}(\mc{X})} \mc{O}(\mc{U}) \cong \mc{O}(\mc{V})$, using
Remark \ref{Risom} and the fact that the isomorphism holds
modulo $\pi$ (where it simply expresses the fact that $Y \otimes_X U
\cong V$).  Geometrically, this means that
$\mc{F}|_{\mc{V}} = \Phi$.  Similarly, $\mc{A} \otimes_{\mc{O}(\mc{X})}
R[[T]] \cong A$ (again, looking modulo $\pi$, this just says that $Y
\times_X \Spec \hat{\mc{O}}_{X, x} = \prod_{i=1}^n \hat{\mc{O}}_{Y,
  y_i}$).  The claim is proved, proving (i).
Under the assumptions of (ii), since $\mu$ is
$G$-equivariant, $G$ acts on $\mc{A}$ and the action restricts to that
on $\mc{O}(Y)$.  This proves (ii).
\end{proof}

\subsection{Proof of Theorem \ref{Tlocalglobal}}\label{Sprooflocalglobal}
Let $x_1, \ldots, x_n$ be a finite set of closed points of $x$ containing the branch locus of
$f$, with $n \geq 2$.  Let $U = X \backslash \{x_1, \ldots, x_n\}$,
let $V = f^{-1}(U)$, let $X_i = U \cup \{x_i\} \subseteq X$, and let $Y_i
= f^{-1}(X_i)$.  Note that $X_i$ and $Y_i$ are affine since $n \geq
2$.  Let $X_R$ be a smooth lift of $X$ over $R$, and let $\mc{X}$ be
the formal completion of $X_R$ at the uniformizer $\pi$ of $R$.  Let $\mc{X}_i \subseteq \mc{X}$ (resp.\ $\mc{U}
\subseteq\mc{X}$) be the lift of $X_i$ (resp.\ $U$)
guaranteed by Lemma \ref{Lsinglecurveformallift}, and let $\Phi: 
\mc{V} \to \mc{U}$ be the $G$-cover of formal curves guaranteed by
Lemma \ref{Letalelifting}.  Lastly, let $A_i$ be the $R[[T]]$-algebra $\Ind_{I_{y_i}}^G R[[Z]]$,
where $R[[Z]]/R[[T]]$ is a lift of the $I_{y_i}$-extension
$\hat{\mc{O}}_{Y, y_i}/\hat{\mc{O}}_{X, x_i}$ for some point $y_i$
lying above $x_i$.  

We apply Proposition \ref{Ppatching} with $f|_{Y_i}: Y_i \to X_i$ in
the role of $f: V \to U$ and with $A_i$ in the
role of $A$.  The assumptions of Proposition \ref{Ppatching}(ii) are
satisfied, and we obtain $G$-Galois covers $\mc{F}_i: \mc{Y}_i \to
\mc{X}_i$, all of which restrict to $\Phi$ on $V$.  Now, a $G$-Galois
cover of $\mc{X}_i$ corresponds to a coherent sheaf $\mc{B}_i$ of
$G$-Galois $\mc{O}_{\mc{X}_i}$-algebras.  Using (Zariski!)
gluing, the sheaves $\mc{B}_i$ glue to a coherent sheaf
$\mc{B}$ of $\mc{O}_{\mc{X}}$-algebras on the proper formal scheme $\mc{X}$.  By Grothendieck's Existence
Theorem (\cite[Th\'{e}or\`{e}me 3]{gfga}), $\mc{B}$ is the $\pi$-adic completion
of a coherent sheaf $B_R$ of $G$-Galois $\mc{O}_{X_R}$-algebras
(technically, \cite{gfga} only gives the result for sheaves of \emph{modules},
but since Grothendieck's Existence Theorem in fact gives an equivalence of \emph{tensor} categories of
coherent sheaves on $\mc{X}$ and $X_R$, one gets the same result for
sheaves of \emph{algebras}, as well as for sheaves of $G$-Galois algebras.  See
\cite[General Principle 2.2.4]{Ha:pg}  and the surrounding discussion
for details.  The sheaf $B_R$ gives rise to a $G$-cover $f_R: Y_R \to X_R$ of proper,
smooth $R$-curves whose special fiber is $f$.  This is the lift we seek. \qed

\begin{remark}\label{Rbertinmezard}
Bertin and M\'{e}zard gave an alternate proof of Theorem
\ref{Tlocalglobal} that relies on deformation theory (see
\S\ref{Sdeflocalglobal} below and \cite[\S3]{BM:df}).
There is also a proof by Green and Matignon (\cite[III]{GM:lg}).
\end{remark}


\section{The local lifting problem and its obstructions}\label{Sgeneralities}
In this section, $m$ represents a natural number prime
to $p$.  In light of the local-global principle (Theorem
\ref{Tlocalglobal}), the following question, known as the \emph{local
  lifting problem}, is the key to solving the lifting problem.

\begin{question}\label{Qlocallifting}
Suppose $G$ is a finite group, and $k[[z]]/k[[t]]$ is a $G$-Galois
extension.  Does there exist a
characteristic zero discrete valuation ring
$R$ with residue field $k$ and a $G$-Galois extension $R[[Z]]/R[[T]]$ such that the $G$-action on $R[[Z]]$ reduces to the given $G$-action on $k[[z]]$?
\end{question}
If such a lift exists, we say that $R[[Z]]/R[[T]]$ is a \emph{lift} of $k[[z]]/k[[t]]$ over $R$, or that $k[[Z]]/k[[t]]$ \emph{lifts to characteristic zero} (or
\emph{lifts over $R$}).\\

\begin{remark}\label{Rsolvable}
As one sees in \S\ref{Sramification}, any group $G$ for which
there exists a faithful local $G$-extension is of the form $P \rtimes
\ints/m$, where $P$ is a $p$-group.  We will call such a group
\emph{cyclic-by-$p$}.  
In particular, cyclic-by-$p$ groups are \emph{solvable}.  This is
one of the benefits of the local-global principle --- it converts a
problem that potentially deals with all finite groups into a problem
dealing only with solvable groups.
\end{remark}

An $G$-extension $k[[z]]/k[[t]]$ will be known as a \emph{local
  $G$-extension}.  Since the lifting problem does not always have a solution, neither does the local lifting problem.  So we ask:
 \emph{Can we find necessary and sufficient criteria for a local $G$-extension to lift to characteristic zero?  Over some particular $R$?} 

A cyclic-by-$p$ group for which every local $G$-extension lifts to characteristic zero
is called a \emph{local Oort group} for $k$.  If there exists a local
$G$-extension lifting to characteristic zero, $G$ is called a
\emph{weak local Oort group}.  The question of whether a group $G$ is
a weak local Oort group has been called ``the inverse Galois problem''
for the local lifting problem (\cite{Ma:pg}).

\begin{example}\label{Elocaltame}
The group $\ints/m$ is a local Oort group for all $p \nmid m$.  This is because, up to a
change of variable, every local $\ints/m$-extension is $k[[z]]/k[[t]]$
where $z^m = t$ and the Galois action is generated by $z \mapsto
\zeta_m z$.  The lift is simply $R[[Z]]/R[[T]]$ with Galois action
generated by $Z \mapsto \zeta_mZ$.
\end{example}

\begin{example}\label{Enegativezpzp}
By the local-global principle, every $G$-cover that does not lift to
characteristic zero yields a local $H$-extension that does not lift to
characteristic zero, where $H$ is the inertia group of some
ramification point.  For instance, let $\iota: G = (\ints/p)^n \to k^+$ be
an embedding. Example \ref{Ezpzpnonlift} gives
rise to the $(\ints/p)^n$-action on $k[[u]]$ such that $\sigma \in G$
sends $u$ to $u/(1 + \iota(\sigma) u)$ (here, $u = 1/z$, where $z$ is
the coordinate on $\proj^1$).  Let $k[[t]] = k[[u]]^G$.  Then, for $n
> 1$ and $p^n > 4$, the local $G$-extension $k[[u]]/k[[t]]$ does not
lift to characteristic zero.  In particular, $(\ints/p)^n$ is not a
local Oort group (although it is a weak local Oort group, by
\cite{Ma:pg}).
\end{example}

\subsection{The (local) KGB obstruction}\label{SlocalKGB}
The KGB obstruction (Definition \ref{DglobalKGB}) has a local
counterpart called the \emph{local KGB obstruction} (often, one
drops the word ``local,'' which should not cause much confusion).  This
is the main tool used to show that local $G$-extensions do not lift to
characteristic zero.

Let $k[[z]]/k[[t]]$ be a local $G$-extension, where $G \cong P \rtimes
\ints/m$, with $P$ a $p$-group and $p \nmid m$.  A theorem of Harbater (\cite{Ha:mp})
in the case $m=1$ and Katz and Gabber
(\cite[Theorem 1.4.1]{Ka:lg}) in the general case
states that there exists a unique $G$-cover $Y \to \proj^1_k$ that is \'{e}tale outside $t \in \{0, \infty\}$, tamely ramified of 
index $m$ above $t = \infty$, and totally ramified above $t = 0$ such
that the $G$-extension of complete local rings at $t=0$ is given by $k[[z]]/k[[t]]$.  This is called the 
\emph{Harbater-Katz-Gabber (HKG) cover associated to $k[[z]]/k[[t]]$}.  By Theorem \ref{Tlocalglobal} and
Example \ref{Elocaltame}, the $G$-cover $f: Y \to \proj^1_k$ lifts to
characteristic zero if and only if the extension $k[[z]]/k[[t]]$ does.  

\begin{definition}\label{DKGB}
In the context above, we say that a local $G$-extension has a
\emph{(local) KGB obstruction to lifting} if the associated HKG-cover does.
\end{definition} 

\begin{remark}
One can also formulate the local KGB obstruction in a purely local
way (not using HKG-covers), using the different and Proposition
\ref{Pdifferent} below as a replacement for the fact that the genus of
a lift of a curve equals the genus of the original curve.  We leave
this as an exercise.   
\end{remark} 

\begin{remark}
The \emph{Bertin obstruction} of \cite{Be:ol} is strictly weaker than
the local KGB obstruction, so we
do not discuss it further.  That the local KGB obstruction is at least
as strict as the Bertin obstruction is proven in
\cite[Theorem 4.2]{CGH:ll}.  An example of a local
$\ints/3 \times \ints/3$-extension with vanishing Bertin obstruction
but non-vanishing local KGB obstruction is given in \cite[Example B.2]{CGH:ll}. 
\end{remark}

Clearly, if $k[[z]]/k[[t]]$ lifts to characteristic zero, its local KGB
obstruction vanishes.  If $G$ is cyclic-by-$p$ and
the local KGB obstruction vanishes for \emph{all} local
$G$-extensions, then $G$ is called a \emph{local KGB group} for $p$.
If the local KGB obstruction vanishes for \emph{some} local
$G$-extension, then $G$ is called an \emph{weak local KGB group} for $p$.

The following classification of the local KGB groups is due to Chinburg, Guralnick, and Harbater.

\begin{theorem}[\cite{CGH:ll}, Theorem 1.2]\label{TKGB}
The local KGB groups for $k$ consist of the cyclic groups, the dihedral group $D_{p^n}$ for any $n$, the group $A_4$ (for $\cf(k) = 2$),
and the generalized quaternion groups $Q_{2^m}$ of order $2^m$ for $m \geq 4$ (for $\cf(k) = 2$).
\end{theorem}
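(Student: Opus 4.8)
The statement is a classification, so the proof naturally splits into two halves: showing that each listed group actually is a local KGB group, and showing that no other cyclic-by-$p$ group is. I would organize the argument around the local KGB obstruction as reformulated via the different (the ``purely local'' formulation mentioned in the remark after Definition \ref{DKGB}): a local $G$-extension $k[[z]]/k[[t]]$ lifts only if there exists a genus-$g$ curve $C$ in characteristic zero with faithful $G$-action whose quotients $C/H$ have the genera forced by the HKG-cover $Y\to\proj^1_k$ — equivalently, whose ramification data over $C\to C/G$ are compatible, via Riemann–Hurwitz in characteristic zero, with the conductors computed from the different of $k[[z]]/k[[t]]$. The key numerical input is that a wildly ramified point of the HKG-cover contributes strictly more than $e-1$ to the ramification divisor (Remark \ref{Rbranchcollide}), so lifting ``spreads out'' the wild ramification into several tame points in characteristic zero, and the total contribution is constrained by Riemann–Hurwitz for $C$ and for each $C/H$.

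First I would handle the cyclic case, which is immediate: cyclic groups are local Oort groups by the Oort conjecture (proven in \cite{OW:ce}, \cite{Po:oc}), hence a fortiori local KGB groups, since an actual lift supplies the required characteristic-zero curve. For the dihedral groups $D_{p^n}$ I would appeal to the known lifting results for $D_{p^n}$-extensions summarized in the survey (so again the obstruction vanishes because genuine lifts exist), or, if one wants a self-contained argument, construct the required characteristic-zero curve directly: take a $\ints/p^n$-cover realizing the appropriate conductor and glue on the involution. The two sporadic $2$-group families, $A_4$ and $Q_{2^m}$ for $m\geq 4$ in characteristic $2$, are the delicate ``existence'' cases: here one does not (in general) have a lift, so one must exhibit by hand a curve $C/\complex$ (or over $\ol{K}$) with a faithful $A_4$- (resp.\ $Q_{2^m}$-) action whose subquotient genera match every possible HKG-cover for that group. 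Because $A_4$ has only the subgroups $1,\ints/2,\ints/3,V_4,A_4$ and $Q_{2^m}$ has a very restricted, totally ordered-above-the-center subgroup lattice, the list of genus conditions to satisfy is short, and one builds $C$ as an explicit superelliptic or fiber-product curve with enough free parameters (branch points) to hit the target Riemann–Hurwitz data. The main obstacle is precisely this step: for each conductor that can occur in a local $A_4$- or $Q_{2^m}$-extension in characteristic $2$, one must produce a matching characteristic-zero configuration, and verifying that the Riemann–Hurwitz constraints for $C$ and all $C/H$ are simultaneously solvable is where the real work lies.

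For the converse — that nothing else is a local KGB group — I would argue by a reduction-to-small-cases strategy. A cyclic-by-$p$ group $G = P\rtimes\ints/m$ that is a local KGB group has the property that every subquotient $G'$ arising as an inertia group in an HKG-cover is again a local KGB group (since the HKG-cover of a subquotient extension is dominated by that of $G$, and genus compatibility is inherited); so it suffices to rule out a finite list of ``minimal bad'' groups and then invoke the classification of finite groups all of whose relevant subquotients are cyclic, dihedral $2$-groups, $A_4$, or generalized quaternion. Concretely: (i) $(\ints/p)^2$ is not a local KGB group for $p^2>4$ by Example \ref{Enegativezpzp} and Remark \ref{Robstructions} — its HKG-cover has genus too large relative to the forced $PGL_2$-type quotient behavior — and more generally any non-cyclic abelian $p$-group with a $(\ints/p)^2$ is excluded; (ii) for $p$ odd, any $p$-group $P$ that is not cyclic contains either $(\ints/p)^2$ or a noncyclic group of order $p^3$, and one checks directly, via the different/Riemann–Hurwitz bound, that the latter also fails; (iii) for $p=2$ one must be more careful, since $D_{2^n}$ and $Q_{2^n}$ survive, so the argument here is the group-theoretic fact that a $2$-group with cyclic center and no $(\ints/2)^2 \times \ints/2$-type obstruction is cyclic, dihedral, semidihedral, or generalized quaternion, and one then eliminates the semidihedral groups (and $D_{2^n}$ for the wrong reason) by an explicit HKG-genus computation showing the Riemann–Hurwitz data cannot be matched in characteristic zero; (iv) finally one deals with the prime-to-$p$ part $\ints/m$ acting on $P$ — the action must be such that the semidirect product still has all subquotients on the allowed list, which for $m>1$ forces $P$ cyclic and $G$ dihedral of the stated form. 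Throughout, each elimination is a finite computation of genera from conductors using Proposition \ref{Pdifferent} together with the characteristic-zero Riemann–Hurwitz formula; the structural skeleton is the induction on subquotients plus the group-theoretic classification of $p$-groups with restricted subgroup lattices. The genuinely hard part, as above, is not the converse (which is bookkeeping once the framework is set up) but the positive construction for $A_4$ and $Q_{2^m}$ in characteristic $2$.
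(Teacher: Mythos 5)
Your outline has the right overall shape (two directions, genus bookkeeping via Riemann--Hurwitz, reduction to a finite list of bad groups), but two steps would fail as written. First, in the positive direction you propose to handle $D_{p^n}$ by citing ``known lifting results for $D_{p^n}$-extensions summarized in the survey.'' No such results exist for general $n$: as recorded in \S\ref{Slocaloort}, $D_{p^n}$ is known to be local Oort only for $n=1$ (together with $D_4$ and $D_9$), and the general case is exactly the open strong Oort conjecture (Question \ref{Sstrongoort}). So for $D_{p^n}$ --- just as for $Q_{2^m}$, which is provably \emph{not} local Oort --- the vanishing of the KGB obstruction must be established by directly exhibiting a characteristic-zero curve with the prescribed subquotient genera; your ``self-contained'' fallback is not an optional alternative but the only available route, and it is where the real work lies. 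Relatedly, your item (iv) is false as stated: $A_4 = (\ints/2)^2 \rtimes \ints/3$ has non-cyclic $p$-Sylow subgroup and $m=3>1$, yet it is on the list, and cyclic groups of order $p^nm$ also have $m>1$ without being dihedral.

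Second, and more structurally, your reduction in the negative direction runs through ``subquotients arising as inertia groups,'' justified by the claim that genus compatibility is inherited by subextensions. The paper's reduction (following Chinburg--Guralnick--Harbater) goes through \emph{quotients}: if $G$ is a local KGB group then so is every quotient $G/H$, because an arbitrary local $G/H$-extension can be extended \emph{upward} to a local $G$-extension (\cite[Lemma 2.10]{CGH:og}), the HKG-cover of the quotient extension is literally the $H$-quotient of the HKG-cover of the extended extension, and a characteristic-zero curve witnessing vanishing for the latter descends to one for the former. The analogous statement for \emph{subgroups} is not justified by your argument: given an arbitrary local $H$-extension with $H \leq G$, one would have to realize it as $k[[z]]/k[[z]]^H$ inside some local $G$-extension (an embedding problem, not the content of the extension lemma), and even then the HKG-cover of the $H$-subextension is a different curve from the HKG-cover of the $G$-extension, so the genus conditions do not transfer as you assert. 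Without the quotient formulation, the finite list of minimal bad groups (e.g.\ $\ints/p \times \ints/p$ for $p$ odd, which your Example \ref{Enegativezpzp} reference does correctly exclude) cannot be leveraged to eliminate all remaining non-abelian cyclic-by-$p$ groups, which is the crux of \cite[\S11--12]{CGH:ll}.
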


\begin{proof}[Sketch of proof.]
We briefly
outline the \emph{negative} direction
(i.e., that there are no local KGB groups aside from the ones on the list).  
The first observation is that if $G$ is a local KGB group for $p$, then any
quotient of $G$ is as well.  This is because any local $G/H$-extension
can be extended to a local $G$-extension (\cite[Lemma 2.10]{CGH:og}), and if the local
$G/H$-extension has nontrivial local KGB obstruction, then the $G$-extension
clearly has one too.  Thus, to show that a group is not local KGB,
it suffices to show it has a quotient that is not local KGB.  There is an
explicit list of types of groups that can be shown not to be local KGB (this
list includes, for example, $\ints/p \times \ints/p$ for $p$ odd), and
it can be further shown that any cyclic-by-$p$-group either has a quotient on this list, or is one of the groups in Theorem
\ref{TKGB} (see \cite[\S11-12]{CGH:ll}).  This completes the proof. 
\end{proof}

For examples illustrating the KGB obstruction for $\ints/p^n \rtimes
\ints/m$ and additional examples for $\ints/p \times
\ints/p$, see \cite[Propositions 5.8,
5.9]{Ob:ll}.

Since any local Oort group is a local KGB group, the search for local
Oort groups is restricted to the groups in Theorem \ref{TKGB}.  
The generalized quaternion groups were shown \emph{not} to be local Oort groups in \cite{BW:ac}.  The obstruction developed in \cite{BW:ac} 
to show this is called the \emph{Hurwitz tree obstruction}, and will
be discussed in \S\ref{Shurwitzobstruction} (see specifically Example \ref{Ebrewiswewers}).  

\subsubsection{Global consequences}\label{SKGBconsequences}

Suppose $G$ is an \emph{arbitrary} finite group and fix a prime $p$.  Let us revisit the
question of whether $G$ is an Oort group (for $p$).  By
the local-global principle, it is clear that if every cyclic-by-$p$
subgroup of $G$ is local Oort, then $G$ is an Oort group.  In fact,
the converse is true as well.

\begin{prop}[{\cite[Theorem 2.4]{CGH:og}}]\label{Plocalglobaloort}
If $G$ is a finite group, then it is an Oort group for $p$ if and only if
every cyclic-by-$p$ subgroup of $G$ is a local Oort group.
\end{prop}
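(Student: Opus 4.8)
The plan is to prove the two implications separately. The reverse implication is essentially immediate from the machinery already in place, so the real content is in the forward implication, which requires a construction. Throughout I will use the ``easy'' local-to-global observation recorded at the start of \S\ref{Slocalglobal}: any lift of a $G$-cover $f$ over $R$ restricts, at each ramification point $y$, to a lift over $R$ of the $I_y$-extension $\hat{\mc{O}}_{Y,y}/\hat{\mc{O}}_{X,f(y)}$.

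\textbf{($\Leftarrow$)} Suppose every cyclic-by-$p$ subgroup of $G$ is a local Oort group, and let $f : Y \to X$ be any $G$-cover of smooth projective $k$-curves. At each ramification point $y$, the inertia group $I_y$ acts faithfully on $\hat{\mc{O}}_{Y,y} \cong k[[z]]$ with fixed ring $\hat{\mc{O}}_{X,f(y)} \cong k[[t]]$, so $I_y$ is cyclic-by-$p$ (Remark \ref{Rsolvable}) and hence local Oort; thus the local $I_y$-extension $\hat{\mc{O}}_{Y,y}/\hat{\mc{O}}_{X,f(y)}$ lifts over some complete discrete valuation ring with residue field $k$. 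Since $Y$ has only finitely many ramification points, and every complete characteristic-zero discrete valuation ring with residue field $k$ is a finite extension of $W(k)$, I would pass to a common such ring $R$ (lifting being preserved under this finite base change), over which all of these local extensions lift simultaneously. The local--global principle (Theorem \ref{Tlocalglobal}) then yields a lift of $f$ over $R$, so $G$ is an Oort group.

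\textbf{($\Rightarrow$)} Suppose $G$ is an Oort group for $p$, let $H \leq G$ be cyclic-by-$p$, and let $E = k[[z]]/k[[t]]$ be a local $H$-extension; the goal is to lift $E$ to characteristic zero. The crux is to produce a \emph{connected} $G$-cover $f : Y \to X$ of smooth projective $k$-curves together with a ramification point $y \in Y$ such that $I_y = H$ (up to conjugacy in $G$) and $\hat{\mc{O}}_{Y,y}/\hat{\mc{O}}_{X,f(y)} \cong E$. Granting such an $f$: since $G$ is an Oort group, $f$ lifts over some $R$, and by the local-to-global observation above this lift restricts to a lift of $\hat{\mc{O}}_{Y,y}/\hat{\mc{O}}_{X,f(y)} \cong E$ over $R$. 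Hence $E$ lifts.

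It remains to construct $f$, which I expect to be the main obstacle. I would begin with the Harbater--Katz--Gabber $H$-cover $\psi : Z \to \proj^1_k$ attached to $E$ (\cite[Theorem 1.4.1]{Ka:lg}), which is \'etale away from $\{0,\infty\}$ and realizes $E$ at a point over $0$; but this is only an $H$-cover, and one cannot enlarge the structure group to $G$ by naive induction, since $\Ind_H^G Z$ breaks into $[G:H]$ connected components. Instead I would invoke formal/rigid patching: choose auxiliary points $x_1, \dots, x_s \in \proj^1_k \backslash \{0,\infty\}$ and tame cyclic inertia data at them whose generators, together with $H$, generate $G$, then apply Harbater's realization results for Galois covers of curves in characteristic $p$ (\cite{Ha:ac}, \cite{Ha:pg}; cf.\ the proof of \cite[Theorem 2.4]{CGH:og}) to obtain a connected $G$-cover of $\proj^1_k$ branched exactly over $\{0, x_1, \dots, x_s, \infty\}$ whose local extension at a point over $0$ is precisely $E$, with inertia $H$ there. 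The only points needing care are the (easily arranged) admissibility of the prescribed ramification data — a purely group-theoretic generation condition, met by adding enough tame branch points — and the bookkeeping ensuring that the patching leaves the distinguished branch point's local extension untouched; both are routine once the patching apparatus is in place.
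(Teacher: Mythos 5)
Your proposal is correct and follows essentially the same route as the paper: the ``if'' direction is the local--global principle, and the ``only if'' direction realizes the given local $H$-extension at a ramification point of a connected global $G$-cover, lifts that cover, and restricts. The global realization step that you sketch via HKG-covers and patching is exactly what the paper outsources to \cite[Lemma 2.5]{CGH:og}, and your extra care in passing to a common complete discrete valuation ring in the ``if'' direction is legitimate (and glossed over in the paper).
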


\begin{proof}
The ``if'' direction follows from the local-global principle.  To
prove the ``only if'' direction, let $I \leq G$ be a cyclic-by-$p$
subgroup, and let $k[[z]]/k[[t]]$ be a local $I$-extension.  By
\cite[Lemma 2.5]{CGH:og}, there is a $G$-Galois cover $f: Y \to
\proj^1$ such that there is a point $x \in \proj^1$ and a point $y \in
Y$ above $X$ for which the
$I$-Galois extension $\hat{\mc{O}}_{Y, y} / \hat{\mc{O}}_{\proj^1, x}$
is isomorphic (as an $I$-extension) to $k[[z]]/k[[t]]$.  Since $G$ is
an Oort group, $f$ lifts to characteristic zero.  By the easy
direction of the local-global principle, $k[[z]]/k[[t]]$ lifts to
characteristic zero as well.  So $I$ is a local Oort group.
\end{proof}

The following example is contained in \cite[Corollary 1.4]{Ob:A4}. 
\begin{example}\label{EA5oort}
The group $A_5$ is an Oort group for every prime.  Indeed, its only
cyclic-by-$p$ groups (for any $p$) are $\ints/2$, $\ints/3$,
$\ints/5$, $\ints/2 \times \ints/2$, $S_3$, $A_4$, and $D_5$, all of
which are local Oort groups for their respective primes (see \S\ref{Ssummary}).
\end{example}

In light of Proposition \ref{Plocalglobaloort}, classifying the 
Oort groups for $p$ consists of two parts: classifying the local
Oort groups for $p$, and classifying the groups whose cyclic-by-$p$
subgroups are on this list.  Following
Chinburg-Guralnick-Harbater (\cite{CGH:go}), we call a group $G$ an
\emph{O-group} for $p$ if every cyclic-by-$p$ subgroup of $G$ is either
cyclic, $D_{p^n}$, or $A_4$ if $p = 2$.  Since these are the only
cyclic-by-$p$ groups that can be local Oort groups, Proposition
\ref{Plocalglobaloort} shows that a group $G$ can be an Oort group only if
it is an O-group.  In particular, if $D_{p^n}$ is shown to be
local Oort for all $p$ and all $n$, then the list of O-groups is the
same as the list of Oort groups.  The list of all O-groups has been
computed by Chinburg-Guralnick-Harbater.

\begin{theorem}[{\cite[Theorems 2.4, 2.6]{CGH:go}}]\label{TOgroups}
If $p$ is an \emph{odd} prime, then $G$ is an O-group for $p$ if and
only if a $p$-Sylow subgroup $P$ of $G$ is cyclic and either
\begin{itemize}
\item The normalizer $N_G(P)$ and centralizer $Z_G(P)$ of $P$ in $G$
  are equal, or,
\item $|N_G(P)/Z_G(P)| = 2$, the order $p$ subgroup $Q \leq P$ has
  abelian centralizer, and every element of $N_G(Q) \backslash Z_G(Q)$
  acts as an involution inverting $Z_G(Q)$.
\end{itemize}

If $p = 2$, then $G$ is an O-group if and only if $P$ is cyclic or $P$
is dihedral, with $Z_G(K) = K$ for all elementary abelian subgroups $K$ of
order $4$.
\end{theorem}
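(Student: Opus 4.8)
The statement is one of pure finite group theory; the task is to translate the condition ``every cyclic-by-$p$ subgroup of $G$ is cyclic, $D_{p^n}$, or (when $p=2$) $A_4$'' into the displayed local conditions on a Sylow $p$-subgroup $P$, its normalizer, and its centralizer, and then to reverse the translation. The first step is a convenient parametrization: by Schur--Zassenhaus, a subgroup $I\le G$ is cyclic-by-$p$ if and only if $I = Q\rtimes\langle g\rangle$ for some $p$-subgroup $Q\le G$ (necessarily contained in a conjugate of $P$) and some $p'$-element $g\in N_G(Q)$. Thus $G$ is an O-group for $p$ exactly when $Q\rtimes\langle g\rangle$ is cyclic, $D_{p^n}$, or $A_4$ for every such pair $(Q,g)$, and the plan is to distill from this a finite list of conditions, prove sufficiency, and identify the list with the two displayed alternatives.

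First I would determine $P$ itself. Since $P$ is a $p$-group it is cyclic-by-$p$ (take $Q=P$, $g=1$), so $P$ must appear on the allowed list; as $A_4$ is not a $p$-group and $D_{p^n}$ is a $p$-group only for $p=2$, this forces $P$ cyclic when $p$ is odd and $P$ cyclic or dihedral when $p=2$. Consequently every $p$-subgroup $Q$ of $G$ is cyclic ($p$ odd) or cyclic-or-dihedral ($p=2$), using the structural fact that a dihedral $2$-group has only cyclic and dihedral subgroups (no quaternion or semidihedral sections intrude).

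Next, analyze $Q\rtimes\langle g\rangle$ via the action of the $p'$-element $g$ on the cyclic or dihedral $p$-group $Q$, using that $\Aut(\ints/p^j)$ is cyclic and $\Aut(D_{2^a})$ ($a\ge 2$) is a $2$-group. If $g$ centralizes $Q$, one gets the cyclic group $Q\times\langle g\rangle$ when $Q$ is cyclic, but the forbidden group $Q\times\langle g\rangle$ when $Q$ is nonabelian dihedral and $g\ne 1$; if $g$ acts nontrivially, a short case check shows $Q\rtimes\langle g\rangle$ is allowed only when $Q$ is cyclic and $g$ is an involution acting by inversion (yielding $D_{p^j}$), or $p=2$, $Q\cong V_4$, and $g$ has order $3$ (yielding $A_4$); every other possibility --- $\ints/p^j\rtimes\ints/4$, $D_{p^j}\times\ints/m'$ with $m'>1$, $V_4\rtimes\ints/9$, $(\ints/2)^3$, $\ints/4\times\ints/2$, and so on --- is forbidden. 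Because the order-$p$ subgroup $Q_0=\Omega_1(P)$ is characteristic in $P$ and a $p'$-automorphism of a cyclic $p$-group is trivial iff its restriction to $Q_0$ is, everything is governed by the $p'$-elements of $N_G(Q_0)$ (for $p=2$, by $N_G(K)$ for the Klein-four subgroups $K\le P$). Pushing this through yields, for $p$ odd: $G$ is an O-group iff $P$ is cyclic and either $G$ has a normal $p$-complement --- equivalently, by Burnside's criterion, $N_G(P)=Z_G(P)$, in which case every $p'$-element centralizes every $p$-subgroup and we are done --- or $N_G(Q_0)=Z_G(Q_0)\rtimes\langle g\rangle$ with $Z_G(Q_0)$ abelian and $g$ an involution inverting it, which is exactly the second displayed alternative (note $C_G(Z_G(Q_0))=Z_G(Q_0)$ since $Q_0\le Z_G(Q_0)$, which is what makes the three sub-conditions coalesce) and which one checks directly forces every $Q'\rtimes\langle h\rangle$ onto the allowed list. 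For $p=2$: $G$ is an O-group iff $P$ is cyclic (then all relevant $p$-subgroups are cyclic, $\Aut$ of a cyclic $2$-group is a $2$-group, and every cyclic-by-$2$ subgroup is cyclic) or $P$ is dihedral with $Z_G(K)=K$ for every Klein-four $K\le P$ --- the equality $Z_G(K)=K$ precisely excludes the forbidden subgroups $V_4\times\langle g\rangle$, $(\ints/2)^3$, $\ints/4\times\ints/2$, $V_4\rtimes\ints/9$ while still permitting genuine $A_4$'s, in which $C_G(V_4)=V_4$.

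The main obstacle is not any single computation but the bookkeeping of the last step: showing that controlling $Q\rtimes\langle g\rangle$ only for the characteristic small subgroup $Q_0\le P$ (together with knowing $P$) suffices to control it for \emph{all} $p$-subgroups $Q$ and \emph{all} normalizing $p'$-elements, and understanding the possible structure of $Z_G(Q_0)$ --- a group with cyclic Sylow $p$-subgroup, on which $Q_0$ acts trivially --- well enough to detect exactly when it contains a forbidden cyclic-by-$p$ subgroup. This is where the structure theory of groups with cyclic Sylow subgroups (odd case) and the fusion of involutions and Klein-four subgroups in dihedral $2$-groups (the prime $2$) are needed, and assembling the necessity together with the sufficiency of the two clean alternatives is the technical heart of the proof.
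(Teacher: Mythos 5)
First, note that the paper does not prove this statement at all: Theorem \ref{TOgroups} is quoted verbatim from \cite[Theorems 2.4, 2.6]{CGH:go}, so there is no in-paper argument to compare yours against; I can only assess your proposal on its own terms. Your overall strategy is the right one, and most of the individual steps are correct: the parametrization of cyclic-by-$p$ subgroups as $Q\rtimes\langle g\rangle$, the forcing of $P$ to be cyclic (odd $p$) or cyclic/dihedral ($p=2$), the classification of the admissible $Q\rtimes\langle g\rangle$ (in particular that a nontrivially acting $p'$-element on a cyclic $Q$ must be an involution acting by inversion), the reduction to $Q_0=\Omega_1(P)$ via faithfulness of restriction of $p'$-automorphisms, and the treatment of $p=2$ via the Klein-four centralizer condition all check out. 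The equivalence you invoke between ``normal $p$-complement'' and $N_G(P)=Z_G(P)$ is also valid here (Burnside in one direction, Frobenius in the other, using that $P$ is abelian so that $N_G(P)/Z_G(P)$ is a $p'$-group).

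The genuine gap is the step you yourself flag as ``the technical heart'' and then do not supply: the \emph{necessity}, when $N:=N_G(Q_0)\neq Z:=Z_G(Q_0)$, of the statements that $Z$ is abelian and that every element of $N\setminus Z$ is an involution inverting $Z$. Moreover, your description of where the difficulty lies points in a slightly wrong direction: you propose to understand $Z$ ``well enough to detect exactly when it contains a forbidden cyclic-by-$p$ subgroup,'' but a nonabelian $Z$ need not contain any forbidden cyclic-by-$p$ subgroup at all (e.g.\ $Z\cong \ints/p\times S_3$ for $p\geq 5$ has only cyclic ones, since $S_3$ is not cyclic-by-$p$); the forbidden subgroups live in $N$, not in $Z$. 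The missing idea is the following. One first shows every $w\in N\setminus Z$ is a $p'$-element: its $p$-part lies in a cyclic Sylow of $N$ whose unique order-$p$ subgroup is the normal subgroup $Q_0$, hence centralizes $Q_0$; if it were nontrivial, the $p'$-part (which must invert $Q_0$) would also centralize $Q_0$, a contradiction for $p$ odd. Applying the O-group hypothesis to $Q_0\rtimes\langle w\rangle$ then forces $w^2=1$. Now for $y\in N\setminus Z$ and arbitrary $z\in Z$, the product $zy$ again inverts $Q_0$, so $zy\in N\setminus Z$ is an involution, and $(zy)^2=z\,(yzy^{-1})=1$ gives $yzy^{-1}=z^{-1}$; since inversion is an automorphism only of abelian groups, this single computation yields both the inversion property and the abelianness of $Z$. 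Finally, your identification of the resulting condition on $N_G(Q_0)$ with the displayed condition on $N_G(P)/Z_G(P)$ is not automatic: that $N_G(Q_0)\supsetneq Z_G(Q_0)$ forces $N_G(P)\supsetneq Z_G(P)$ requires Burnside's fusion theorem for the abelian Sylow subgroup $P$ (an element inverting $Q_0$ conjugates a generator of $Q_0$ to its inverse inside $P$, hence some element of $N_G(P)$ does too). With these insertions your outline closes up into a proof, but as written these are real missing steps rather than routine bookkeeping.
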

More explicit lists of groups satisfying these criteria are given in
\cite[Theorems 2.7, 3.8]{CGH:go}.

\subsubsection{Consequences of obstructions for weak local Oort groups}\label{Sweak}

Suppose $G := \ints/p^n \rtimes \ints/m$ is non-cyclic (equivalently,
non-abelian).  It turns out that a local $G$-extension
$k[[z]]/k[[t]]$ whose $\ints/p^n$-subextension has first positive lower jump $h$ has vanishing KGB
obstruction precisely when $h \equiv -1 \pmod{m}$.  If this happens, 
the conjugation action of $\ints/m$ on $\ints/p^n$ is faithful, or
equivalently, $G$ is center-free (\cite[Proposition
5.9]{Ob:ll}). So groups $G$ of this form that are neither cyclic nor
center-free cannot be weak local Oort.

Furthermore, Green-Matignon showed that if $G$ contains an
abelian subgroup that is neither cyclic nor a $p$-group, 
then $G$ is not a weak local Oort group (see \cite[Proposition 3.3]{Gr:af}, which shows that no abelian group can be weak local Oort unless it is
cyclic or a $p$-group --- it is more or less trivial to see that if a
group contains a subgroup that is not weak local Oort, then the group
itself is not weak local Oort).  For a somewhat stronger statement, see \cite[Theorem 1.8]{CGH:ll}.

\subsection{The (differential) Hurwitz tree obstruction}\label{Shurwitzobstruction}

Recall that the (local) KGB obstruction comes from exploiting the fact that the
genus of a curve does not change when it is lifted to characteristic
zero.  However, if we have a local $G$-extension $k[[z]]/k[[t]]$ and a
lift of its corresponding HKG-cover, then only remembering the genus
of this cover and its subcovers means that we are actually throwing
out a great deal of other information.  In particular, we are
forgetting the $p$-adic \emph{geometry} of the branch locus (that is,
the distances of the branch points from each other).  It turns out
that these distances satisfy subtle constraints, and the
information about these constraints can be packaged in a combinatorial
structure called a \emph{Hurwitz tree}.  The local $G$-extension
$k[[z]]/k[[t]]$ will have a \emph{Hurwitz tree obstruction} if no Hurwitz tree exists that
properly reflects the higher ramification filtration of $k[[z]]/k[[t]]$.

Henrio gave the first major exposition of Hurwitz trees (\cite{He:ht}), dealing with the case of local $\ints/p$-extensions.  The concept was extended by Bouw and
Wewers to encompass local $\ints/p \rtimes \ints/m$-extensions (\cite{BW:ll}).  Later, it was
extended by Brewis and Wewers to arbitrary groups (\cite{BW:ac}).  A
detailed overview of the construction of Hurwitz trees for $\ints/p
\rtimes \ints/m$-actions is also given in \cite[\S7.3.1, \S7.3.2]{Ob:ll}.
We will give a much briefer overview below, and then we will mention
how one generalizes to the case of arbitrary local $G$-extensions.

\subsubsection{Building a Hurwitz tree}
Suppose $k[[z]]/k[[t]]$ is a local $G$-extension that lifts to a
$G$-extension $R[[Z]]/R[[T]]$, where $R$ is a characteristic zero complete discrete
valuation ring.  The key observation is
that we should think of $R[[Z]]$ as the ring of $R$-valued functions
on the \emph{$p$-adic open unit disc} (since, if $x$ is algebraic over
$R$ and has positive valuation, all power series converge on $x$).  Thus we will think of $\mc{D} := \Spec R[[Z]]$ as the open
unit disc.  By assumption, $G$ acts faithfully on $\mc{D}$ by $R$-automorphisms with no inertia above
a uniformizer of $R$ (that is, the action reduces to a faithful $G$-action on $\Spec k[[z]]$).  
The idea of a Hurwitz tree is to understand the geometry of this
action in combinatorial form.
We will write $\mc{D}_K$ for the generic fiber of $\mc{D}$, where $K =
\Frac(R)$.   Clearly, $G$ acts on $\mc{D}_K$.  We assume $R$ and $K$
are large enough for all ramification points of $\mc{D}_K \to \mc{D}_K/G$ to be defined over $K$. 

Let $\mc{D}' = \mc{D}/G = \Spec R[[T]]$, and write $\mc{D}'_K$
analogously to $\mc{D}_K$.  Let $y_1, \ldots, y_s$  (resp.\ $z_1,
\ldots, z_r$) be the branch (resp.\ ramification)
points of $\mc{D}_K \to \mc{D}'_K$.  The first step in building the Hurwitz
tree is to let $Y^{st}$ (resp.\ $Z^{st}$) be the \emph{stable model} of
$\proj^1_K$ corresponding to $(\mc{D}; y_1, \ldots, y_s)$ (resp.\
$(\mc{D}'; z_1, \ldots, z_r)$---we assume $r, s \geq 2$,
which is automatic as long as $G$ has nontrivial $p$-Sylow subgroup).  This is the minimal
semistable $R$-curve with generic fiber $\proj^1_K$ 
that separates the specializations of the $y_i$ (resp.\ $z_i$) and
$\infty$, where $\mc{D}_K$ (resp.\ $\mc{D}'_K$) $\subseteq \proj^1_K$ is viewed as the open unit disc centered 
at $0$ (see, e.g, \cite[Appendix A]{Ob:ll} for more details).  One way
to think about this is that we extend the coordinate $Z$ (resp.\ $T$)
on $\mc{D}_K$ (resp.\ $\mc{D}'_K$) to $\proj^1_K$.
Let $\ol{Y}$ (resp. $\ol{Z}$) be the special fiber of $Y^{st}$ (resp.\ $Z^{st}$).
Note that there is no natural map $Y^{st} \to Z^{st}$, but the group
$G$ does act naturally on the irreducible components of $\ol{Y}$ (via
lifting a point on an irreducible component to $\mc{D}_K$ and looking at
the specialization of its image under an element of $G$), and associates an irreducible component of $\ol{Z}$ to 
each irreducible component of $\ol{Y}$ (its ``image'' under the
``quotient map'').\footnote{All of this can be avoided by simply
  taking $Y^{st}$ and $Z^{st}$ to be semistable models of the discs
  themselves, in which case $Y^{st} \to Z^{st}$ would be the actual quotient map, but
  semistable models of discs are perhaps less familiar than semistable
  models of curves.}   Furthermore,
if $\ol{V}$ is an irreducible component of $\ol{Y}$ with generic point
$\eta_{\ol{V}}$ lying above an
irreducible component $\ol{W}$ of $\ol{Z}$ wth generic point $\eta_{\ol{W}}$, the map $\mc{D} \to \mc{D}'$ induces Galois extensions of complete local rings
$\hat{\mc{O}}_{Y^{st},  \eta_{\ol{V}}}/\hat{\mc{O}}_{Z^{st},\eta_{\ol{W}}}$.

The underlying tree of the Hurwitz tree is built from the \emph{dual
  graph} $\Gamma$ of $\ol{Z}$.
Specifically, the edges $E(\Gamma)$ and vertices $V(\Gamma)$ correspond to the 
irreducible components and nodes of $\ol{Z}$, respectively.  An edge connects two vertices if the corresponding node is the intersection of the two
corresponding components.   We append another vertex and edge $v_0$ and $e_0$ so that $e_0$ connects $v_0$ to the vertex corresponding to the 
component containing the specialization of $\infty$. Lastly, we append
a vertex $v_i$ for each branch point $y_i$, and connect it via an edge
$e_i$ to the vertex representing the irreducible component of $\ol{Z}$ to which $y_i$ specializes.  This gives us a tree $\Gamma'$.

To each edge $e \in E(\Gamma) \backslash \{e_0, \ldots, e_r\}$, we
attach a positive number $\epsilon_e$ equal to the \emph{thickness} of
the corresponding node.  That is, if the edge corresponds to a point
$\ol{z} \in \ol{Z}$, then the complete local ring
$\hat{\mc{O}}_{Z^{st}, \ol{z}}$ is isomorphic to $R[[U, V]]/(UV -
\rho)$ where $v(\rho) \in \rats$ (\cite{He:dc}), and we set $\epsilon_e = v(\rho)$.  
We set $\epsilon_{e_0}$ equal to $v(z_i)$, where
$z_i$ is a branch point of smallest possible valuation (thought of as
an element of $R$).  Such a branch point
will specialize to the same component of $\ol{Z}$ as $\infty$.  Lastly, we set
$\epsilon_{e_i} = 0$ for all $i > 0$.

Now, suppose the $p$-Sylow subgroup of $G$ is $\ints/p$.  In this case
we attach the following data to each vertex $v \in V(\Gamma')$.

\begin{itemize}
\item A nonnegative integer $\delta_v$ called the \emph{depth} at the
  vertex $v$, which is a measure of inseparability of the map $\mc{D}
  \to \mc{D}'$ above the irreducible component $\ol{W}$ of $\ol{Z}$
  corresponding to $v$ (for $v \in V(\Gamma)$).
Specifically, let $\eta_v$ be the generic point of this irreducible
component and let $\eta_w$ be the generic point of the (always unique) component of
$\ol{Y}$ above it.  The extension
$\hat{\mc{O}}_{Y^{st}, \eta_w} / \hat{\mc{O}}_{Z^{st}, \eta_v}$ has inertia group $\ints/p$ because
of its inseparable residue field extension, and
one can define its depth $\delta_v$ as in \cite[Appendix
B.1]{Ob:ll}.  One also sets $\delta_{v_0} = 0$ and $\delta_{v_i} = 1$
for $1 \leq i \leq r$. 
\item A meromorphic differential form $\omega_v$ on $\ol{W}$ 
associated to the extension $\hat{\mc{O}}_{Y^{st}, \eta_w} / \hat{\mc{O}}_{Z^{st}, \eta_v}$ called a \emph{differential
    datum} (or \emph{deformation datum}).  These are only associated
  to the vertices in $V(\Gamma)$.  By definition, they are either \emph{exact}
  (equal to $df$ for some $f \in k(\ol{W})$) 
  or \emph{logarithmic} (equal to $df/f$ for some $f \in k(\ol{W})$).  See \cite[Corollaire 1.8]{He:ht} or
  \cite[Appendix B.2]{Ob:ll}.  
\end{itemize}

The tree $\Gamma'$, along with the $\epsilon_e$, $\delta_v$, and
$\omega_v$ together form a \emph{Hurwitz tree}.  It turns out that
the data satisfy a host of conditions (see, e.g., \cite[Definition 3.2 and Proposition
3.4]{BW:ll} or \cite[Proposition 7.9]{Ob:ll}).  A (general) Hurwitz tree for a group $G$ with $p$-Sylow
subgroup $\ints/p$ is a tree $\mc{T}$, along with data $\epsilon_e$,
$\delta_v$, and $\omega_v$ satisfying these conditions (see
\cite[Definition 3.2]{BW:ll} or \cite[Definition 7.10]{Ob:ll}).

If $G$ is a general finite group of the form $P \rtimes \ints/m$ with
$P$ a $p$-group and $p \nmid m$, the Hurwitz tree construction of \cite{BW:ac} starts
in the same way, and one produces the same tree $\Gamma'$ and the same
edge thicknesses $\epsilon_e$.  However, one makes the following changes.

\begin{itemize}
\item Each vertex $v \in V(\Gamma')$ now has a \emph{decomposition
    conjugacy class $[G_v]$}, which is the conjugacy class (inside $G$) of the Galois group of $\hat{\mc{O}}_{Y^{st}, \eta_w} / \hat{\mc{O}}_{Z^{st}, \eta_v}$.
\item Instead of being a number, the depth $\delta_v$ for $v \in
  V(\Gamma')$ is now a \emph{$\rats$-valued character on $G$}.  It is
  induced from $G_v$ (see \cite[\S3.2]{BW:ac}).  It is equal to
  $0$ on $v_0$.
\item Each edge $e \in E(\Gamma')$ now has an \emph{Artin character} $a_e$,
  which is an (integral) character on $G$ (see \cite[\S3.3]{BW:ac}).
  In fact, the Artin character $a_{e_0}$ is the Artin character of the
  higher ramification filtration of the original local $G$-extension
  (this result is from \cite[Definition 3.5]{BW:ac}, and the
  definition of the Artin character can be found on \cite[p.\ 99]{Se:lf}). 
\item There are no differential data $\omega_v$.
\end{itemize}

\begin{remark}
The depth character is a useful way of encoding the depth information
for all of the $\ints/p$-subextensions of $R[[Z]]/R[[T]]$.
\end{remark}

\begin{remark}\label{Rartin1}
Similarly, the Artin character is a useful way of encoding the
differential data for all of the $\ints/p$-subextensions of
$R[[Z]]/R[[T]]$.  In the case where the $p$-Sylow subgroup $P$ of $G$
\emph{is} $\ints/p$ (so that we can define differential data and 
Artin characters), the
Artin character tells us about the zeroes and poles of the 
differential data.  Indeed, for an edge $e \in E(\Gamma)$
corresponding to a node $\ol{w}$ of $\ol{Z}$, the value of
$a_e$ on a nontrivial element of $P$ is exactly the order of the
divisor of $\omega_v$ at $\ol{w}$, where $v$ is one of the vertices incident to $e$ and thus corresponds to an
irreducible component $\ol{W}$ of $\ol{Y}$ passing through $\ol{w}$. 
\end{remark}

\begin{remark}\label{Rartin2}
One would like to have a full theory of differential data for general
Hurwitz trees.  In particular, for a vertex $v \in V(\Gamma')$, one
would like to associate a meromorphic differential form
$\omega_v(\chi)$ to each character $\chi$ of $G$.  The thesis of
Brewis (\cite[Chapter 1]{Brewisthesis}) shows how to use Kato's Swan
conductors to do this when
$\chi$ has rank $1$.  However, if $\chi$ has higher rank, one can
obtain a tensor product of differential forms rather than just one
form.  Furthermore, even when one does obtain just one form, it is not
clear exactly how to categorize what forms can be obtained (unlike in
the case where $G$ has $p$-Sylow subgroup of order $p$, in which case
the form is exact or logarithmic).  
\end{remark}

\begin{remark}
In \cite{He:ht}, \cite{BW:ll}, and \cite{Ob:ll}, the graph $\Gamma$
and thicknesses $\epsilon_e$ come from $Y^{st}$ instead of $Z^{st}$
(in the case where $G = \ints/p$, this has the effect of dividing the
$\epsilon_e$ by $p$).  We use $Z^{st}$ for our definitions since that is what the
more general Hurwitz trees of \cite{BW:ac} use.     
\end{remark}

\begin{example}\label{Ehurwitz}
Let $R$ be a complete discrete valuation ring containing $\zeta_p$,
and let $\lambda = \zeta_p - 1$, which has valuation $1/(p-1)$.  Pick
$N \in \nats \backslash p\nats$.
In Example \ref{Ezp}, it is shown that taking the integral closure $A$
of $R[[T]]$ in $\Frac(R[[T]])[W]/(W^p - 1 - \lambda^pT^{-N})$ gives a lift
of the local $\ints/p$-extension given by taking the integral closure
of $k[[t]]$ in $k((t))[y]/(y^p - y - t^{-N})$.  We write down the
(differential) Hurwitz tree for this lift, using the notation above freely.

The branch points $z_1, \ldots, z_{N+1}$ of $\mc{D}_K \to \mc{D}'_K$
are at $T = 0$ and at $T$ any $N$th root of $-\lambda^p$.  Since these points are all mutually equidistant, we have that
$\ol{Z}$ is isomorphic to $\proj^1_k$, and the graph $\Gamma$ is
simply a point, which we call $v$.   To build the Hurwitz tree
$\Gamma'$, we add a root vertex $v_0$ and vertices $v_1, \ldots,
v_{N+1}$ corresponding to the branch points, with each $v_i$ connected
to $v$ by an edge $e_i$.  The thickness $\epsilon(e_0)$ is just the
valuation $p/N(p-1)$ of $\sqrt[N]{-\lambda^p}$.  If $i > 0$, then
$\epsilon(e_i) = 0$.

By definition, we have $\delta_{v_0} = 0$ and $\delta_{v_i} = 1$ for
$1 \leq i \leq N+1$.  It turns out that $\delta_v = 1$ as well
(e.g., \cite[Definition 3.2(ii) and Lemma 3.3(ii)]{BW:ll}).  Lastly,
let $\alpha$ be an $N$th root of $-\lambda^p$.  Then, taking $X =
T/\alpha$, we have that $X$ reduces to a coordinate $x$ on $\ol{Z}$
(the specializations of the branch points to $\ol{Z}$ correspond to $x = 0$ and
all the $N$th roots of unity).
Since $1 + \lambda^pT^{-N} = 1 - X^{-N}$, it follows from
\cite[\S3.2]{BW:ll} (see also \cite[Corollaire 1.8(A)]{He:ht}) that
$$\omega_v = \frac{d(1 - x^{-N})}{1 - x^{-N}} = -N\frac{dx}{x(x^N - 1)}.$$ 
\end{example}

\subsubsection{The Hurwitz tree obstruction}\label{Shurwitzobstruction2}

In the context of a general Hurwitz tree, the way one gets an
obstruction is now clear.  Let $k[[z]]/k[[t]]$ be a local
$G$-extension whose higher ramification filtration has Artin character
$\chi$.  We say there is a \emph{Hurwitz tree obstruction} to lifting
if no Hurwitz tree for $G$ can be constructed having
Artin character $\chi$ on the vertex $v_0$.

\begin{example}\label{Ebrewiswewers}
Let $G = Q_{2^m}$, the generalized quaternion group of order
$2^m$, which can be presented as
$$G = \langle \sigma, \tau \mid \tau^{2^{m-1}} = 1, \tau^{2^{m-2}} = \sigma^2, \sigma\tau\sigma^{-1} = \tau^{-1}\rangle.$$ 
If $m = 3$, this is the standard quaternion group, and we assume $m
\geq 3$. 
Let $\ol{G} = G/\langle \tau^2 \rangle \cong \ints/2 \times \ints/2$.
Then $\ol{G}$ is generated by the images $\ol{\sigma}$ and $\ol{\tau}$
of $\sigma$ and $\tau$.  Consider the $\ol{G}$-action on $k[[z]]$
such that $$\ol{\sigma}(z) = \frac{1}{1+z} \text{ and } \ol{\tau}(z) =
\frac{1}{1+\mu z}$$ with $\mu \in k \backslash \FF_2$.  This action
can be lifted to a faithful $G$-action on $k[[z]]$, giving a local
$G$-extension (\cite[Lemma 2.10]{CGH:og}).  By \cite[Theorem
4.8]{BW:ac}, this local $G$-extension has a
Hurwitz tree obstruction to lifting.  In particular, $Q_m$ is \emph{not} a
local Oort group (cf.\ Theorem \ref{TKGB}).
\end{example}

\subsubsection{The differential Hurwitz tree obstruction}\label{Sdiffhurwitz}
What should be the definition of the differential Hurwitz tree
obstruction?  Well, if $k[[z]]/k[[t]]$ is a local $G$-extension whose
higher ramification filtration has Artin character $\chi$, there
should be a \emph{differential Hurwitz tree obstruction} to lifting 
if no \emph{differential} Hurwitz tree for $G$ can be constructed
having Artin character $\chi$ on the vertex $v_0$.  Of course, we have
not defined a differential Hurwitz tree (aside from when $G$ has
$p$-Sylow subgroup $\ints/p$, in which case a differential Hurwitz
tree is just our usual definition of a Hurwitz tree with differential data).
But it should be a Hurwitz tree enriched with meromorphic differential
forms attached to each vertex $v$ (aside from the leaves) such that
the divisors of these forms have some sort of compatibility with the
Artin characters, and such that the forms themselves are of a certain
type (which will in general be more complicated than just ``exact'' or
``logarithmic,'' see Remark \ref{Rartin2}).  

We elaborate somewhat in the case $G = (\ints/p)^n$.  Suppose we have
a Hurwitz tree for $G$ whose underlying graph has only \emph{one}
non-leaf vertex.  Suppose further that the Artin character for the
edge going from this vertex to the root of the tree (i.e., the vertex
with depth zero) gives the value $m+1$ for all nontrivial elements of
$G$.  In this case, the extra ``differential data'' should
be an $n$-dimensional $\FF_p$-vector space of logarithmic differential
forms, each with a (unique) zero at $\infty$ of order $m-1$ and $m+1$
simple poles.  Such a vector space is called an $E_{m+1, n}$ in the
introduction to \cite{Pa:ev}.  

\begin{example}\label{Epagotobstruction}
Pagot has shown (\cite[Th\'{e}or\`{e}me 2]{Pa:ev}) that for $p \geq
3$, no $E_{m+1, 2}$ exists when $m+1 = ap$ with $a \in \{1,2,3\}$, unless
$a = 2$ and $p = 3$ (Turchetti has also shown nonexistence for $a = 5$
and $p = 3$, see \cite[Theorem 0.3]{Tu:el}).  As a consequence of this, he shows that a local
$(\ints/p)^2$-extension $k[[z]]/k[[t]]$ where every degree
$p$-subextension of $k[[t]]$ has ramification jump $p-1$
(equivalently, where the only ramification jump for $k[[z]]/k[[t]]$ is $p-1$) cannot lift
to characteristic zero when $p \geq 3$ (\cite[Th\'{e}or\`{e}me 3]{Pa:ev}). The proof
proceeds by first observing that a Hurwitz tree for this lift
has only one non-leaf vertex (\cite[Theorem III, 3.1]{GM:op}---this corresponds to the branch points
of the lift all being \emph{equidistant} from each other), and then
invoking the nonexistence of an $E_{p,2}$. 

In fact, there is no KGB obstruction to lifting this type of extension
(\cite[Proposition 5.8]{Ob:ll}), and one can show that the Hurwitz tree obstruction
vanishes as well.  Thus it is justified to say that there is a
differential Hurwitz tree obstruction to lifting in this case.
\end{example}

\begin{remark}\label{Rhurwitzpositive}
In fact, differential Hurwitz trees can be used to get \emph{positive}
results for the local lifting problem.  See \S\ref{Shurwitz}.
\end{remark}

\section{Summary of present local lifting problem results}\label{Ssummary}

\subsection{Local Oort groups}\label{Slocaloort}

From Theorem \ref{TKGB} and the discussion following it, we know that
the only possible local Oort groups are cyclic, $D_{p^n}$, and $A_4$
for $p = 2$.   The \emph{local Oort conjecture} says that cyclic groups
are local Oort groups.  The local-global principle shows that the
local Oort conjecture implies the Oort conjecture, and the existence
of HKG-covers shows that the Oort conjecture implies the local Oort
conjecture. Here is the state of current knowledge.

\begin{itemize}
\item Cyclic groups $G$ are local Oort (i.e., the Oort conjecture is
  true).  This is the main result of Obus-Wewers and Pop
  (\cite{OW:ce}, \cite{Po:oc}).
Earlier cases had been proven by
Sekiguchi-Oort-Suwa (\cite{SOS:ask}, $v_p(|G|) \leq 1$) and
Green-Matignon (\cite{GM:lg}, $v_p(|G|) \leq 2$).
\item Bouw and Wewers showed that $D_p$ is local Oort
for $p$ odd (\cite{BW:ll}), and Pagot showed the same for $p=2$
(\cite{Pa:ev}, \cite{Pa:rc}).  
\item Obus proved that $D_9$ is local Oort for $p = 3$ (\cite{Ob:go}), 
and Weaver proved the same for $D_4$ and $p = 2$ (\cite{We:D4}).
  These are the only dihedral groups not of the form $D_p$ for which this is known.
\item The group $A_4$ was announced to be a local Oort group for $p =
  2$ by Bouw in \cite{BW:ll}.  A proof has been written up by Obus in \cite{Ob:A4}.  
\end{itemize}

The results above will be discussed further in \S\ref{Sapprox} and \S\ref{Smumford}. 

\subsection{Weak local Oort groups}\label{Sweaklocaloort}

We have seen in \S\ref{Sweak} that there are certain obstructions to
being a weak local Oort group.  On the positive side, aside from the
local Oort groups from \S\ref{Slocaloort}, we have the following
results.

\begin{itemize}
\item Matignon showed that $(\ints/p)^n$ is weak local Oort for all $p$ and
$n$ (\cite{Ma:pg}).  This is done using explicit methods (see \S\ref{Sexplicit}).   
\item Obus showed that $G = \ints/p^n \rtimes \ints/m$ is weak local Oort
  whenever it is center-free (\cite[Corollary 1.19]{Ob:go}).  This is
  also a necessary condition as long as $G$ is not cyclic, as we saw in \S\ref{Sweak}.
\item Brewis showed that the group $D_4$ is a weak
local Oort group for $2$ (\cite{Br:D4}).  Weaver's proof that $D_4$ is
local Oort for $2$ uses Brewis's result as a base case (see \S\ref{Sequichar}). 
\end{itemize}

Important open questions about the local lifting problem will be
discussed in \S\ref{Sopen}.

\section{Lifting techniques and examples}\label{Stechniques}
In this section, we will survey a variety of techniques that have
been used to construct lifts of local $G$-extensions. 

\subsection{Birational lifts and the different criterion}\label{Sbirational}
Usually, when dealing with Galois extensions of $k[[t]]$, it will be more convenient to deal with extensions of fraction fields than extensions of rings.
For instance, by Artin-Schreier theory, one knows that any $\ints/p$-extension $L/k((t))$ is given by an equation of the form $y^p - y = f(t)$.  
But writing down equations for the integral closure of $k[[t]]$ in $L$ is much more difficult.  So we will often want to think of a Galois ring extension in terms of the associated extension
of fraction fields.  In particular, we define a \emph{birational lift}
as follows:

\begin{definition}\label{Dbirational}
Let $k[[z]]/k[[t]]$ be a local $G$-extension.  A \emph{birational
  lift} over a complete characteristic zero discrete valuation ring $R$ with residue field
$k$ is a $G$-extension $M/\Frac(R[[T]])$ such that:
\begin{enumerate}
\item If $A$ is the integral closure of $R[[T]]$ in $M$, then the
  \emph{integral closure} of $A_k$ is isomorphic to (and identified
  with) $k[[z]]$ (equivalently, $\Frac(A_k) \cong k((z))$).
\item The $G$-action on $k((z)) = \Frac(A_k)$ induced from that on $A$ restricts to the given $G$-action on $k[[z]]$.
\end{enumerate}
\end{definition}

In fact, Garuti has shown (\cite{Ga:pr}) that any local $G$-extension has a birational lift to characteristic zero.

The following criterion, which saves one from the effort of making
explicit computations with integral closures, is extremely useful for seeing when a birational lift is actually a lift.

\begin{prop}[The different criterion, {\cite[I, 3.4]{GM:lg}}]\label{Pdifferent}
Suppose $A/R[[T]]$ is a birational lift of the local $G$-extension
$k[[z]]/k[[t]]$.  Let $K = \Frac(R)$, let $\delta_{\eta}$ be the
degree of the different $\mc{D}_{\eta}$ 
of $A_K/R[[T]]_K$ (i.e., the length of $A_K/\mc{D}_{\eta}$ as a
$K$-module), and let $\delta_s$ be the degree of the different
$\mc{D}_s$ of $k[[z]]/k[[t]]$ (i.e., the length of $A_k/\mc{D}_s$ as a
$k$-module).  Then $\delta_s \leq \delta_{\eta}$, and 
equality holds if and only if $A/R[[T]]$ is a lift of $k[[z]]/k[[t]]$
(that is, $A \cong R[[Z]]$). 
\end{prop}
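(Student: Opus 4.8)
The plan is to compare the special and generic fibers of the finite cover $\Spec A \to \Spec R[[T]]$ and to extract flatness of $A$ over $R$ from an equality of different degrees. First I would observe that since $A$ is the integral closure of $R[[T]]$ in $M$, and $R[[T]]$ is a regular (hence normal, excellent) two-dimensional local ring, $A$ is a finite $R[[T]]$-module (by Japanese/excellence), and $\Spec A$ is a normal surface. The generic fiber $A_K$ is finite \'etale over $\Frac(R[[T]]_K)$ away from the branch locus, and the different $\mc{D}_\eta$ measures ramification there; its degree $\delta_\eta$ is computed by the conductor-discriminant formula or directly from the valuations of the different at each branch point on the generic fiber. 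On the special fiber, $A_k$ need not be normal, but its normalization is $k[[z]]$ by hypothesis (1), and $\delta_s$ is the degree of the different of the $G$-extension $k[[z]]/k[[t]]$, computed from its higher ramification filtration.

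The key step is a \emph{semicontinuity of the different} statement: the degree of the different cannot go up under specialization, i.e. $\delta_s \le \delta_\eta$, with equality precisely when no ramification is ``lost'' to the singularities of $A_k$. Concretely, I would proceed as follows. Let $\tilde A_k$ denote the normalization of $A_k$, so $\tilde A_k \cong k[[z]]$; there is a natural inclusion $A_k \hookrightarrow \tilde A_k$ with finite-length cokernel, and the length $\ell$ of this cokernel measures the failure of $A$ to be normal (equivalently, the failure of $A \to \Spec R$ to have normal special fiber, equivalently — after using that $A$ is already normal as a ring — the failure of $A$ to be $R$-flat, since an $R$-flat normal $\Spec A$ with one-dimensional regular generic fiber forces a regular special fiber here, $A \cong R[[Z]]$). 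The different of $\tilde A_k / k[[t]]$ has degree $\delta_s$; the different of $A_K/R[[T]]_K$ has degree $\delta_\eta$; and a direct computation with the module of K\"ahler differentials, using that $\Omega_{A/R[[T]]}$ is a torsion $A$-module whose length-function (fiberwise) is upper semicontinuous, yields $\delta_\eta = \delta_s + (\text{correction from the non-normal locus})$. The correction term is a sum of non-negative contributions, one for each singular point of $A_k$, and it vanishes exactly when $A_k$ is already normal, i.e. when $A_k \cong k[[z]]$ and hence $A \cong R[[Z]]$ by Lemma \ref{Llocalring}-type reasoning.

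In more detail, the cleanest route to the correction term: choose a uniformizer and express $A$ locally as $R[[T]][W]/(h(W))$ for a suitable $h$ (possible after replacing $M$ by a primitive generator and clearing denominators — this is where one uses that $M/\Frac(R[[T]])$ is separable). Then $\Omega_{A/R[[T]]} \cong A/(h'(W))$, the different is $(h'(W))$, and $\delta_\eta$, $\delta_s$ are the $K$- resp.\ $k$-lengths of $A_K/(h'(W))$, $\tilde A_k/(\bar h'(\bar W))$. The inequality $\delta_s \le \delta_\eta$ then follows from (i) $\dim_K A_K/(h'(W)) \ge \dim_k A_k/(\bar h'(\bar W))$ by semicontinuity of fiber dimension for the finite $R$-module $A/(h'(W))$, together with (ii) $\dim_k A_k/(\bar h'(\bar W)) \ge \delta_s$ because passing to the normalization $\tilde A_k$ can only decrease the colength of the different (the different of the normalization divides the image of the different). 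Chasing through when both inequalities are equalities: equality in (i) forces $A/(h'(W))$ to be $R$-flat, hence (since $A_K$ is reduced) $A$ itself to be $R$-flat; equality in (ii) forces $A_k = \tilde A_k$, i.e.\ $A_k$ normal. Either equality alone is not obviously enough, but the two together — which is exactly $\delta_s = \delta_\eta$ — give that $A$ is a flat $R$-algebra with normal, one-dimensional, hence regular special fiber $\cong k[[z]]$; by Lemma \ref{Llocalring} such an $A$ is $\cong R[[Z]]$ with $Z$ lifting $z$, and the $G$-action specializes correctly by construction. Conversely if $A \cong R[[Z]]$ then $A_k \cong k[[z]]$ is already normal and the different degrees agree by the explicit comparison.

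The main obstacle I anticipate is making precise and rigorous the inequality ``the different of the normalization divides (the image of) the different,'' i.e.\ step (ii), together with the bookkeeping that identifies $\dim_k A_k/(\bar h'(\bar W)) - \delta_s$ with a sum of local contributions supported at the non-normal points of $A_k$ — one wants these contributions manifestly non-negative and manifestly zero iff $A_k$ is normal there. This is a standard but slightly delicate fact about conductors and differents for one-dimensional (possibly non-reduced, or non-normal reduced) rings; the careful version is in \cite[I, 3.4]{GM:lg}, and for the exposition I would either cite it directly or reproduce the short computation in the key case where $A_k$ is reduced (which suffices here, since $A_K$ is geometrically reduced and $A$ is $R$-torsion-free as a subring of a field, so $A_k$ has no embedded primes once one knows $\dim A_k = 1$).
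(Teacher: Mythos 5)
The paper does not actually prove this proposition; it is quoted directly from \cite[I, 3.4]{GM:lg}, so your proposal can only be measured against the standard argument rather than against a proof in the text. Your overall strategy is the right one and is essentially Green--Matignon's: the discrepancy $\delta_{\eta} - \delta_s$ should be a nonnegative quantity supported on the non-normal locus of $A_k$ (concretely, a positive multiple of $\ell := \dim_k(\tilde{A}_k/A_k)$; the identity one actually proves is $\delta_{\eta} = \delta_s + 2\ell$), and it vanishes if and only if $A_k$ is normal, iff $A_k \cong k[[z]]$, iff $A \cong R[[Z]]$. Your endgame --- flat over $R$ with normal one-dimensional special fibre, hence $A \cong R[[Z]]$ by the argument of Lemma \ref{Llocalring} --- is fine, though note that $A$ is \emph{automatically} $R$-flat (it is a domain containing $R$, hence $\pi$-torsion-free), so nothing needs to be extracted from your step (i) on that front.

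The route you propose for the inequality itself has two concrete problems. First, the presentation $A \cong R[[T]][W]/(h(W))$ is not available: $R[[T]]$ is two-dimensional, so the integral closure $A$ need not be monogenic, and your fix of ``clearing denominators'' replaces $A$ by the strictly smaller ring $R[[T]][W]$, whose K\"ahler different $(h'(W))$ computes the different of the wrong (generally non-normal) ring. The interpolating module $A/(h'(W))$ on which the whole comparison rests therefore need not exist; each of $\mc{D}_{\eta}$ and $\mc{D}_s$ can be computed fibrewise from monogenic presentations of complete DVR extensions, but those do not glue to a presentation of $A$. Second, step (i) invokes semicontinuity in the wrong direction: for a finite module over the DVR $R$, the special-fibre dimension is $\geq$ the generic-fibre dimension, not $\leq$. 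In fact, whenever both fibres of $A/(h')$ are finite-dimensional, that module is Cohen--Macaulay of dimension one with no associated prime containing $\pi$, hence $R$-flat, so (i) is an \emph{equality} carrying no information. The entire content of $\delta_s \leq \delta_{\eta}$ and of the equality criterion then sits in your step (ii), i.e.\ in the conductor--different relation $\mf{d}_{A_k}\tilde{A}_k = \mf{c}\cdot\mf{d}_{\tilde{A}_k}$ together with the identification $\dim_k \tilde{A}_k/\mf{c} = 2\ell$ (which itself needs $A_k$ Gorenstein, another point requiring justification) --- and this is exactly the step you defer to the reference. As written, the proposal identifies the correct mechanism but does not establish either the inequality or the characterization of equality.
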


\begin{remark}\label{Rbasechange}
Replacing $R$ and $K$ by finite extensions does not affect the degree
of $\mc{D}_{\eta}$ above, so we may assume that the ramified
ideals in $A_K/R[[T]]_K$ have residue field $K$.
\end{remark}

\begin{remark}\label{Requichar}
The different criterion is also valid when $R$ is an
\emph{equicharacteristic} complete discrete valuation ring (i.e., $R =
k[[\varpi]]$).  This will be used in \S\ref{Smumford}.
\end{remark}

\subsection{Explicit lifts}\label{Sexplicit}

Sometimes, the simplest way of lifting a local $G$-extension is to
write down explicit equations.  We give two examples in this
section.  

\begin{example}{\textbf{($\ints/p$-extensions)}}\label{Ezp}
The following argument shows that all local $\ints/p$-extensions lift
to characteristic zero.  It is a simplified version of arguments
originally from \cite{SOS:ask}, and can also be found in
\cite[Theorem 6.8]{Ob:ll}.  Since it is the most basic example, one
would be remiss not to include it here.

The key observation is that any
$\ints/p$-extension of a characteristic zero field containing a $p$th
root of unity is a \emph{Kummer extension}, given by extracting a
$p$th root.  The trick is then to assume $\zeta_p \in R$ and to find an element of $\Frac(R[[T]])$
such that normalizing $R[[T]]$ in the corresponding Kummer extension
yields the original local \emph{Artin-Schreier extension}.

Say that an element of a field $L$ of characteristic $p$ is a
\emph{$\wp$th power} if it is expressible as $x^p - x$ for $x \in L$.
By Artin-Schreier theory, any $\ints/p$-extension of $k((t))$ is given
by $k((t))[y]/(y^p - y - g(t))$, and is well defined up to adding a
$\wp$th power to $g(t)$.  In particular, we may assume that $g(t) \in
t^{-1}k[t^{-1}]$, as any element $u \in k[[t]]$ can be written as $x^p
- x$, where $x = -u - u^p - u^{p^2} - \cdots$.  Similarly, we may
assume that $g(t)$ has no terms of degree divisible by $p$.  
If $g(t) = t^{-N}h(t)$, where $h(t) \in k[t]$ has nonzero constant
term, then $h(t)$ is an $N$th power in $k((t))$, so replacing $t$ with
an $N$th root of $1/g(t)$ (which is a uniformizer), we may assume that $g(t) = t^{-N}$, with $p
\nmid N$.  

Given a local $\ints/p$-extension $k[[z]]/k[[t]]$, we may thus assume
without loss of generality that it is the integral closure of $k[[t]]$
in the Artin-Schreier extension of $k((t))[y]/(y^p - y- t^{-N})$
given by $t^{-N}$.  Let $R= W(k)[\zeta_p]$, let $\lambda = \zeta_p -
1$, and let $K = \Frac(R)$.
Then $v(\lambda^{p-1} + p) > 1$.  Consider the integral closure $A$ of $R[[T]]$ in the Kummer extension of $\Frac(R[[T]])$ 
given by 
\begin{equation}\label{Ezpform}
W^p = 1 + \lambda^p T^{-N}.
\end{equation}  Making the substitution $W = 1 + \lambda Y$, we obtain
$$(\lambda Y)^p + p\lambda Y  + o(p^{p/(p-1)})= \lambda^p T^{-N},$$ where $o(p^{p/(p-1)})$ represents terms with coefficients of valuation
greater than $p/(p-1)$.  This reduces to $y^p - y = t^{-N}$.  So we
have constructed a birational lift.  

The jump in the ramification filtration for $k[[z]]/k[[t]]$ occurs at
$N$ (Exercise!  This can be done explicitly by writing a uniformizer
in terms of $t$ and $y$).  By (\ref{Ebasicdifferent}), the degree of the different of $k[[z]]/k[[t]]$ is $(N+1)(p-1)$.
On the other hand, the generic fiber of $\Spec A \to \Spec R[[T]]$ is branched at exactly $N + 1$ points in the 
unit disc (at $T = 0$ and $T = \nu$ as $\nu$ ranges through the $N$th roots of $-\lambda^p$).
Since the ramification is tame, the degree of the different of $A_K/R[[T]]_K$ is 
$(N + 1)(p-1)$ as well.  By Proposition \ref{Pdifferent}, our birational lift is an actual lift.
\end{example}

\begin{remark}\label{Rgeneralcyclic}
It is easy to use Example \ref{Ezp} to show that all local
$\ints/pm$-extensions lift to characteristic zero, when $p \nmid m$.
See \cite[Proposition 6.3]{Ob:ll}.
\end{remark}

\begin{remark}\label{Rwhichuniformizer}
The case $N = 1$ is the local version of Example \ref{Ezplift} above.
Note that in this case, $y^{-1}$ is a uniformizer of $k[[z]]$, and we can
set $z = y^{-1}$.  Taking $Z = Y^{-1} = \lambda/(W-1)$, Remark \ref{Ranyuniformizer}
shows that $A$ can be written as $R[[Z]]$, once we verify that $Z \in A$.  This
is true because expanding out  the equation $(1 + \lambda Z^{-1})^p =
1 + \lambda^p T^{-1}$ coming from (\ref{Ezpform}) and muliplying both sides by $TZ^p/\lambda^p$ gives an
integral equation for $Z$ over $R[[T]]$.
\end{remark}

\begin{example}{\textbf{(Some $\ints/2 \times \ints/2$-extensions)}}\label{Ezpzp}
For odd $p$, it is an open problem in general to determine exactly
which local $\ints/p \times \ints/p$-extensions lift to characteristic
zero (see Example \ref{Enegativezpzp} and also \cite[Proposition
5.8]{Ob:ll}, which is an exposition of material in \cite[I, Theorem
5.1]{GM:lg}).  However, some local $\ints/p \times \ints/p$-extensions
can be lifted explicitly.  For example, suppose $p = 2$, and consider the local
$\ints/2 \times \ints/2$-extension $k[[z]]$ of $k[[t]]$ given by normalizing
$k[[t]]$ in $k((t))[y, w]/(y^2 - y - t^{-1}, w^2 - w - t^{-N})$ for
any odd $N > 1$.  Letting $R = W(\ol{\FF}_2)$, I claim that normalizing
$R[[T]]$ in $$L = \Frac(R[[T]])[U, V]/(U^2 - 1 - 4T^{-1}, V^2 - 1 -
4T^{-N})$$ gives an extension $A/R[[T]]$ lifting $k[[z]]/k[[t]]$.  Since the field extension
$L/\Frac(R[[T]])$ is the compositum of two $\ints/2$-extensions, each
giving rise to a lift of the component $\ints/2$-extensions of
$k[[t]]$ (see (\ref{Ezpform}), and note that $\lambda^p = 4$), we have
that $L/\Frac(R[[T]])$ certainly gives rise to a \emph{birational}
lift.  In order to show that it is actually a lift, we apply the
different criterion (Proposition \ref{Pdifferent}).  

Since the upper numbering is preserved under taking quotients
(\cite[IV, Proposition 14]{Se:lf}, the upper jumps of $k[[z]]/k[[t]]$ appear at $1$
and $N$.  This means that $G_0 = G_1 = \ints/2 \times \ints/2$, and
$G_2 = \cdots = G_{2N-1} = \ints/2$, with the rest of the filtration
trivial.  By (\ref{Ebasicdifferent}), the degree of the different
is equal to $2N + 4$.  

On the other hand, since ramification groups in characteristic zero
are cyclic, every ramification index of the $\ints/2 \times
\ints/2$-extension $A_K / R[[T]]_K$ is $2$, and thus each
ramified ideal of $R[[T]]_K$ with residue field $K$ (which, after a
finite extension of $K$, we assume is every ramified ideal) contributes $2$ to the degree of the
different.  These ramified ideals correspond to the zeroes and poles
of $1 + 4T^{-1}$ and $1 + 4T^{-N}$ in the open unit disc.  There are a
total of $N + 1$ zeroes and $1$ pole, showing that the degree of the
different is $2N + 4$.  We are done.
\end{example}

\begin{remark}
In fact, every local $\ints/2 \times \ints/2$-extension lifts to
characteristic zero, but writing down the lift is not generally as
straightforward as above.  For more on this, see \cite{Pa:ev} and \cite{Pa:rc}. 
\end{remark}

\begin{remark}
Matignon (\cite{Ma:pg}) has shown that $(\ints/p)^n$ is a weak local Oort group for
any $p$ and $n$, by writing down an explicit example and and explicit lift.
\end{remark}

For an example of a local $D_p$-extension for any odd $p$ with an explicit lift to
characteristic zero, see \cite[IV, Proposition 2.2.1]{GM:op} (or
\cite[Proposition 7.3]{Ob:ll} for the same example).  As in the case
of $\ints/2 \times \ints/2$, all local $D_p$-extensions lift to
characteristic zero (\cite{BW:ll}), but it is not in general easy to write down the lift
explicitly.

For an example of explicit lifts of some local $A_4$-extensions to characteristic zero, see \cite[Propositions 5.1, 5.2]{Ob:A4}.  The paper \cite{Br:D4}
gives examples of explicit lifts of some local $D_4$-extensions.

\subsection{Sekiguchi-Suwa Theory}\label{Ssekiguchi}
One potential way of obtaining explicit lifts for cyclic local
extensions is the \emph{Kummer-Artin-Schreier-Witt theory}, or
\emph{Sekiguchi-Suwa theory} (developed in \cite{SS:kasw1},
\cite{SS:kasw2}, with \cite{MRT:ss} being a nice survey).   Here, we will limit
ourselves to mentioning that \emph{Kummer-Artin-Schreier theory}, as
developed by Sekiguchi, Oort, and Suwa in \cite{SOS:ask}, gives an
explicit group scheme $\mc{G}$ defined over $\ints_p[\zeta_p]$ whose special fiber is $\Gp_a$ and whose generic fiber is
$\Gp_m$.  Furthermore, the theory exhibits the (more or less unique) degree $p$ isogeny on
$\mc{G}$ explicitly.  Any lift of a local $\ints/p$-extension (which is
Artin-Schreier) to a Kummer extension is a torsor under the kernel of
this isogeny, and knowing the explicit equations cutting out this
kernel leads one to discover the Kummer extension used in Example
\ref{Ezp}.  The Kummer-Artin-Schreier-Witt theory generalizes this story to
isogenies of degree $p^n$. We refer the reader to \cite[\S4.8]{Ob:ll}
for a brief exposition, and then to \cite{MRT:ss} if deeper knowledge
is desired.

We note that Green and Matignon were able to use the
Kummer-Artin-Schreier-Witt theory to show that $\ints/p^2$ is a local
Oort group (\cite{GM:lg}, or \cite[\S6.5]{Ob:ll} for an overview).
The equations involved in Kummer-Artin-Schreier-Witt theory for
$\ints/p^n$ become very complicated when $n
> 2$, and the theory has not been successfully applied to the local
lifting problem for these groups.

\subsection{Hurwitz trees}\label{Shurwitz}

The Hurwitz trees discussed in \S\ref{Shurwitzobstruction} have been
used to obtain positive results for the local lifting problem in the
case $G = \ints/p$ (\cite{He:ht}---of course, this is already proven
in Example \ref{Ezp}) and $G = \ints/p \rtimes \ints/m$
(\cite{BW:ll}. \cite{BWZ:dd}).  The process is outlined in some detail
in \cite[\S7.3.3 and \S7.3.4]{Ob:ll}, and we will not repeat it here.
We content ourselves with stating a (lightly paraphrased) version of
the theorem of Bouw, Wewers and Zapponi.

\begin{theorem}[\cite{BWZ:dd}, Theorem 2.1]\label{Tzpzm}
Suppose $p$ is a prime not dividing $m$. A nonabelian $\ints/p \rtimes
\ints/m$-extension $k[[z]]/k[[t]]$ lifts to characteristic zero iff
its KGB obstruction vanishes.  
\end{theorem}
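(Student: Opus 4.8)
The ``only if'' direction is immediate: if $k[[z]]/k[[t]]$ lifts over some $R$, then so does its associated HKG $G$-cover $f\colon Y \to \proj^1_k$ by Theorem~\ref{Tlocalglobal} together with Example~\ref{Elocaltame}, and since lifting preserves the genus of every quotient curve $Y/H$, the KGB obstruction vanishes. So I concentrate on the converse, beginning by translating the hypothesis into arithmetic. By \cite[Proposition~5.9]{Ob:ll}, for a \emph{nonabelian} $G = \ints/p \rtimes \ints/m$ the KGB obstruction for $k[[z]]/k[[t]]$ vanishes precisely when $G$ is center-free --- equivalently, $\ints/m$ acts on $\ints/p$ through an injective character $\chi\colon \ints/m \hookrightarrow \FF_p^{\times}$, forcing $m \mid p-1$ --- and the unique break $h$ of the $\ints/p$-subextension of $k[[z]]/k[[t]]$ satisfies $h \equiv -1 \pmod m$; write $h = cm-1$ with $c \geq 1$ (note $\gcd(h,m)=1$, which is what makes $\chi$ faithful). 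Thus it suffices to lift every nonabelian local $\ints/p \rtimes \ints/m$-extension with $\ints/p$-break $\equiv -1 \pmod m$ over a suitable finite extension $R$ of $W(k)[\zeta_p]$.

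Since the $p$-Sylow subgroup of $G$ is $\ints/p$, I would then pass to the Hurwitz-tree formalism of \S\ref{Shurwitzobstruction}. By the realization results for $\ints/p \rtimes \ints/m$-extensions (Bouw--Wewers \cite{BW:ll}, refined in \cite{BWZ:dd}), $k[[z]]/k[[t]]$ lifts as soon as one produces a differential Hurwitz tree for $G$ carrying the Artin character of the higher ramification filtration of $k[[z]]/k[[t]]$ on its root edge $e_0$. Because $h \equiv -1 \pmod m$, the right candidate is the Hurwitz tree with a \emph{single} non-leaf vertex $v$ --- corresponding to all branch points of the sought-for lift coalescing into one equidistant cluster, just as in the mechanism behind Example~\ref{Epagotobstruction} --- and on such a tree all the data (the thicknesses $\epsilon_e$, the depth $\delta_v$, the Artin-character compatibilities) collapse to the single problem of exhibiting a suitable \emph{deformation datum}: a logarithmic differential form $\omega$ on $\proj^1_k$ that is a $\chi$-eigenform for the $\mu_m$-action $x \mapsto \zeta_m x$, with prescribed divisor, namely simple poles arranged into full $\mu_m$-orbits together with a zero, resp.\ pole, of the order dictated by $h$ at the two $\mu_m$-fixed points $x=\infty$, resp.\ $x=0$.

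The heart of the argument is the construction of this $\omega$. Following the dictionary between deformation data and Belyi maps --- the main technical device of \cite{BWZ:dd} --- the existence of a logarithmic $\chi$-eigen-differential on $\proj^1_k$ with the prescribed divisor is translated into the existence of a tame cover of $\proj^1_k$ with prescribed, $\mu_m$-symmetric ramification, which by Grothendieck's comparison of tame covers with those in characteristic zero (Theorem~\ref{Ttamelifting}) and Riemann's existence theorem reduces to a purely combinatorial Riemann--Hurwitz count. That count is solvable precisely because $h+1$ is divisible by $m$: the simple poles of $\omega$ must be distributed into complete $\mu_m$-orbits, and there are exactly $c=(h+1)/m$ of them. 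This is where the congruence $h \equiv -1 \pmod m$ is used essentially; it is the exact positive counterpart of the non-existence of the logarithmic-differential spaces underlying Pagot's obstruction (Example~\ref{Epagotobstruction}). Once $\omega$ is in hand, verifying the Hurwitz-tree axioms of \cite[Proposition~3.4]{BW:ll} for the one-non-leaf-vertex tree carrying it is routine, the realization theorem produces a $G$-extension $A/R[[T]]$ reducing to $k[[z]]/k[[t]]$, and the different criterion (Proposition~\ref{Pdifferent}) confirms it is a genuine lift: the equality $\delta_s=\delta_\eta$ holds because the generic-fibre ramification of $A/R[[T]]$ is the tame ramification encoded by the Artin character on $e_0$, which has the same degree of different as $k[[z]]/k[[t]]$.

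I expect the main obstacle to be precisely the construction of $\omega$ with the correct divisor \emph{and} the correct $\mu_m$-equivariance simultaneously: one must fit the simple poles into $\mu_m$-orbits while prescribing the behavior at the two fixed points, and then prove --- through the dessins/Belyi description, or an explicit polynomial construction --- that such a form genuinely exists rather than merely failing to be obviously obstructed. All the surrounding work --- descending through the tame $\ints/m$-cover, realizing the semidirect-product commutation relation $\tau\sigma\tau^{-1}=\sigma^{\chi(\tau)}$ on the lift, and the different bookkeeping --- is formal by comparison.
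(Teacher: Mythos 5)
Your architecture agrees with the proof the paper is actually citing (the paper itself gives no argument for Theorem \ref{Tzpzm}; it refers to \cite{BWZ:dd} and to the outline in [Ob:ll, \S 7.3.3--7.3.4]): the easy direction via the HKG-cover and Theorem \ref{Tlocalglobal} plus Example \ref{Elocaltame}; the translation of KGB-vanishing into $h\equiv -1\pmod m$ with faithful conjugation action; the reduction to an equidistant (single interior vertex) Hurwitz tree whose only nontrivial ingredient is a logarithmic $\chi$-eigendifferential with prescribed divisor (a \emph{special deformation datum} in the sense of \cite{BW:ll}); and the final check via the different criterion. All of that is the right skeleton.

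The genuine gap is at the step you yourself identify as the heart of the matter, and your proposed way of closing it would not work. The existence of the special deformation datum is not ``a purely combinatorial Riemann--Hurwitz count,'' and it cannot be obtained by prescribing $\mu_m$-symmetric ramification data for a tame cover, invoking Riemann's existence theorem over $\complex$, and descending through Theorem \ref{Ttamelifting}. The object required lives in characteristic $p$: writing $\omega=du/u$ with $u=\prod_i(x-x_i)^{a_i}$, the poles and residues can indeed be arranged in $\mu_m$-orbits for free, but the condition that $\omega$ vanish to the prescribed order at $\infty$ imposes a system of polynomial equations on the $x_i$ and $a_i$ over $k$ whose solvability is exactly the issue; equivalently, the associated ``special cover'' of $\proj^1_k$ has supplementary critical points whose images are constrained in a way that has no characteristic-zero counterpart, so an abstract characteristic-zero three-point cover with the right numerical ramification does not reduce to the required datum. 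This is precisely why \cite{BWZ:dd} exists as a separate paper: there the existence is proved by an induction on the type of the deformation datum, degenerating special covers into special covers of smaller type, not by a single count. Note also that the congruence $m\mid h+1$ only guarantees the \emph{absence of an a priori obstruction} to such a count; it does not produce a solution --- Example \ref{Epagotobstruction} is exactly a situation where the analogous numerology is consistent and yet the required space of logarithmic differentials fails to exist. So your sketch locates the difficulty correctly but does not resolve it, and the reduction you propose for resolving it is not sound as stated.
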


\begin{remark}\label{Rdihedral}
By \cite[Proposition 5.9]{Ob:ll}, Theorem \ref{Tzpzm} is equivalent to stating that $k[[z]]/k[[t]]$
lifts to characteristic zero if and only if the ramification jump of
the $\ints/p$-subextension is congruent to $-1$ (mod $m$).  This
condition holds when $m = 2$ (e.g., \cite[Lemma
1.4.1(iv)]{Pr:fw}), so the dihedral group $D_p$ is local Oort for $p$ odd.
\end{remark}

\subsection{Successive approximation}\label{Sapprox}
The method of successive approximation gives a new approach to lifting local $G$-extensions, which has been
successful in the case that $G = \ints/p^n$ (\cite{OW:ce}) and $\ints/p^n \rtimes
\ints/m$, with $p \nmid m$ (\cite{Ob:go}).  The method has been
described in detail in \cite[\S6.6]{Ob:ll} when $G =
\ints/p^n$, and is similar when $G = \ints/p^n \rtimes \ints/m$
with $p \nmid m$, so as in \S\ref{Ssekiguchi}, we give only an
extremely brief overview.  

In both cases, one constructs a lift inductively.  Suppose $G = \ints/p \rtimes \ints/m$, and $k[[z]]/k[[t]]$ is
a local $G$-extension with vanishing KGB obstruction.  If $G$ is
cyclic (resp.\ non-abelian), then the extension lifts by Example
\ref{Ezp} and Remark \ref{Rgeneralcyclic} (resp.\ Theorem \ref{Tzpzm}).  Now, by
induction, assume that the $G/(\ints/p)$-subextension $k[[s]]/k[[t]]$ has
a lift to characteristic zero.  If $G$ is cyclic, we start by \emph{guessing} the form of a $\ints/p^n$-Kummer
extension in characteristic zero such that the
$\ints/p^{n-1}$-subextension is a lift of $k[[s]]/k[[t]]$, and the
full extension is a lift of \emph{some} local $\ints/p^n$-extension $k[[z']]/k[[t]]$
with smallest possible upper jump.  Our guess will generally be
incorrect, but one can show that it can be gradually deformed into
something closer to a lift (closeness of $A/R[[T]]$ to being a lift is
measured by the different of $A/R[[T]]$ localized at a uniformizer of $R$).  The
main result of \cite{OW:wr} shows that this deformation process
eventually stops, and we get a lift of $k[[z']]/k[[t]]$.  A further deformation gives a lift of
$k[[z]]/k[[t]]$, as desired.  If $G$ is metacyclic, the idea is the
same, but we try to lift $k[[z]]/k[[t]]$ from the beginning, without
going through the intermediate step involving the extension
$k[[z']]/k[[t]]$. 

The method of successive approximation gives rise to the following two
results.

\begin{theorem}[{\cite[Theorem 1.4]{OW:ce}}, {\cite[Theorem
    6.28]{Ob:ll}}]\label{Tcyclicnoessram}
Suppose $k[[z]]/k[[t]]$ is a local $\ints/p^n$-extension with upper
jumps $u_1, \ldots, u_n$, such that $u_{i+1} < pu_i + p$ for $1
\leq i \leq n-1$.  Then $k[[z]]/k[[t]]$ lifts to characteristic zero.
\end{theorem}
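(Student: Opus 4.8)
The plan is to induct on $n$ via the method of successive approximation, the case $n=1$ being exactly Example~\ref{Ezp} (the hypothesis on the jumps then being vacuous). For the inductive step, let $H\cong\ints/p$ be the unique subgroup of order $p$ in $\ints/p^n$ and put $k[[s]]:=k[[z]]^H$, so that $k[[s]]/k[[t]]$ is a local $\ints/p^{n-1}$-extension. Since the upper numbering is compatible with quotients (\cite[IV, Proposition 14]{Se:lf}), its upper jumps are $u_1,\dots,u_{n-1}$, which still satisfy $u_{i+1}<pu_i+p$ for $1\le i\le n-2$; hence by induction $k[[s]]/k[[t]]$ lifts. After base change we may take the coefficient ring to be a complete characteristic zero discrete valuation ring $R$ with residue field $k$ containing $\zeta_{p^n}$ (for instance $R=W(k)[\zeta_{p^n}]$, which still has residue field $k$), and we fix a lift $R[[S]]/R[[T]]$ of $k[[s]]/k[[t]]$ whose $\ints/p^{n-1}$-action reduces to the given one; producing a lift over any such $R$ answers the local lifting problem.

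Next I would lay out the search space of candidate lifts. Normalize $k[[z]]/k[[t]]$ via Artin--Schreier--Witt theory (removing $\wp$-powers, in the spirit of Example~\ref{Ezp}). Because $\operatorname{char}\Frac(R)=0$ and $\zeta_{p^n}\in R$, every $\ints/p^n$-extension of $\Frac(R[[T]])$ whose $\ints/p^{n-1}$-subextension is $\Frac(R[[S]])$ is a Kummer extension $\Frac(R[[S]])[W]/(W^p-hV)$, where $V\in R[[S]]$ is a Kummer generator of $R[[S]]/R[[T]]$ on which the chosen generator of $\ints/p^{n-1}$ acts by multiplication by $\zeta_{p^{n-1}}$, and $h$ ranges over $\Frac(R[[T]])^\times$; the ``eigenvector'' shape of the datum $hV$ is exactly what forces the degree-$p^n$ extension to be $\ints/p^n$-Galois rather than $\ints/p^{n-1}\times\ints/p$ (compare the Kummer--Artin--Schreier--Witt picture of \S\ref{Ssekiguchi}). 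The ``guess'' is a choice of $h$, with coefficient valuations patterned on~\eqref{Ezpform}, built from the normalized presentation of $k[[z]]/k[[t]]$ but simplified so that the special fiber of the candidate realizes the \emph{smallest} top upper jump $u_n'$ permitted by the fixed $\ints/p^{n-1}$-subextension; it is a birational lift of the resulting $\ints/p^n$-extension $k[[z']]/k[[t]]$ (birational lifts exist in general by Garuti, \cite{Ga:pr}) but in general not an honest lift.

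The core of the argument is to deform the guess into an honest lift using the different criterion (Proposition~\ref{Pdifferent}; here $R$ has characteristic zero, so Remark~\ref{Requichar} is not needed). Writing $A(h)$ for the integral closure of $R[[T]]$ in the candidate extension, one computes $\delta_s$, the degree of the different of $k[[z']]/k[[t]]$, from its upper jumps via the standard computation of the different from the higher ramification filtration (see~\eqref{Ebasicdifferent}), and $\delta_\eta(h)$, the degree of the different of $A(h)_K/R[[T]]_K$, as a sum of tame local contributions $e_{\mathfrak{p}}-1$ over the ramified primes $\mathfrak{p}$ of $A(h)_K$ (all ramification being tame since $\operatorname{char}K=0$), so that $\delta_\eta(h)$ is controlled by the number and ramification indices of the branch points on the open unit disc. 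One always has $\delta_s\le\delta_\eta(h)$, with equality precisely when $A(h)\cong R[[Z]]$ is a lift. Starting from the initial $h$, one deforms $h$ --- keeping $R[[S]]/R[[T]]$ fixed --- so as to strictly decrease $\delta_\eta$; the main theorem of \cite{OW:wr} on reduction of the different for $\ints/p$-covers of the open disc shows that this process terminates, producing an honest lift of $k[[z']]/k[[t]]$. A further deformation, now allowed to move the special fiber, raises the top jump from $u_n'$ to $u_n$ and yields a lift of $k[[z]]/k[[t]]$ itself; the remaining bookkeeping is carried out in \cite[\S6.6]{Ob:ll} and \cite{OW:ce}.

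The main obstacle is this last, deformation-theoretic stage: one must show both that the reduction-of-the-different procedure terminates and that the deformations available can actually reach the prescribed $k[[z]]/k[[t]]$, rather than stalling at some $k[[z']]$ with $u_n'<u_n$. This is exactly where the hypothesis $u_{i+1}<pu_i+p$ is used --- it bounds the ``essential ramification'' at each level, hence bounds the gap $\delta_\eta-\delta_s$ that must be closed and ensures that the admissible deformations suffice to close it. Without such a bound, i.e.\ in the presence of large essential ramification, this argument breaks down and one needs the further ideas of \cite{OW:ce} and \cite{Po:oc} that complete the proof of the Oort conjecture.
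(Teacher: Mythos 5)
Your proposal follows essentially the same route as the paper, which itself gives no proof of Theorem \ref{Tcyclicnoessram} beyond citing \cite{OW:ce} and sketching the identical successive-approximation strategy in \S\ref{Sapprox}: induct via the $\ints/p^{n-1}$-subextension, make a Kummer-theoretic guess lifting some $k[[z']]/k[[t]]$ with smallest top jump, measure closeness to a lift via the different, invoke \cite{OW:wr} for termination, and finish with a further deformation reaching $k[[z]]/k[[t]]$. The only small discrepancy is that the quantity driven down in the approximation process is the different of $A/R[[T]]$ localized at a uniformizer of $R$ (a boundary invariant), rather than the generic-fiber different degree $\delta_{\eta}$ itself, but this does not affect the outline, whose hard analytic content both you and the paper defer to \cite{OW:ce} and \cite{OW:wr}.
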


\begin{remark}\label{Rsuccapprox1}
\begin{enumerate}[(i)]
\item The criterion on the $u_i$ stated in \cite{OW:ce} and
  \cite{Ob:ll} is slightly different from what appears above, but it
  holds whenever the criterion above holds by \cite[Key Lemma
    4.15]{Po:oc}.  Pop calls the condition above \emph{no essential ramification}.
\item By Example \ref{Ejumpprogression}, one has $u_{i+1} \geq pu_i$
  for any local $\ints/p^n$-extension. 
\item In fact, in \cite{OW:ce}, one only needs that $u_{i+1} < pu_i + p$ for $2 \leq i
  \leq n-2$.  Since the full Oort conjecture has been proven in any
  case (using the techniques of \S\ref{Smumford}), this is
  not so important.
\end{enumerate}
\end{remark}

\begin{theorem}[{\cite[Theorem 1.14]{Ob:go}}]\label{Tmetacyclicnoessram}
Suppose $k[[z]]/k[[t]]$ is a local $\ints/p^n \rtimes
  \ints/m$-extension whose (unique) $\ints/p^n$-subextension has upper
  jumps $u_1, \ldots, u_n$ such that all $u_i \equiv -1 \pmod{m}$, that $u_{i+1} < pu_i + mp$ for $1 \leq i
  \leq n-1$, and that $u_1 < mp$.  If a certain criterion called the ``isolated
  differential data criterion'' is satisfied by $(u_1, \ldots, u_n)$,
  then $k[[z]]/k[[t]]$ lifts to charcteristic zero.
\end{theorem}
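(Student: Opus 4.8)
The plan is to prove this by \emph{successive approximation} on the $p$-power part of $G = \ints/p^n \rtimes \ints/m$, in the spirit of \S\ref{Sapprox}, carrying the tame $\ints/m$-action along at every stage and recognizing lifts via the different criterion (Proposition \ref{Pdifferent}). The induction is on $n$. For the base case $n=1$ the extension $k[[z]]/k[[t]]$ is a local $\ints/p \rtimes \ints/m$-extension whose $\ints/p$-subextension has jump $u_1$: if $G$ is cyclic it lifts by Example \ref{Ezp} and Remark \ref{Rgeneralcyclic}, while if $G$ is nonabelian the hypothesis $u_1 \equiv -1 \pmod m$ is exactly the vanishing of the KGB obstruction, so it lifts by Theorem \ref{Tzpzm}. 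In particular the isolated differential data criterion plays no role when $n=1$.

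For the inductive step, let $H \cong \ints/p$ be the unique subgroup of order $p$ inside the normal subgroup $\ints/p^n$ of $G$; since $H$ is characteristic in $\ints/p^n$ it is normal in $G$, and $G/H \cong \ints/p^{n-1} \rtimes \ints/m$. Put $k[[s]] := k[[z]]^H$. By compatibility of the upper numbering with quotients, $k[[s]]/k[[t]]$ is a local $\ints/p^{n-1}\rtimes\ints/m$-extension whose $\ints/p^{n-1}$-subextension has upper jumps $u_1, \ldots, u_{n-1}$; these still satisfy the congruences, the bound $u_{i+1} < pu_i + mp$, the bound $u_1 < mp$, and the portion of the isolated differential data criterion pertaining to $(u_1, \ldots, u_{n-1})$. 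By induction there is a lift $B/R[[T]]$ of $k[[s]]/k[[t]]$ over some $R$.

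Next I would extend $B/R[[T]]$ to a lift of $k[[z]]/k[[t]]$ itself. One writes down a one-parameter ``guess'': a $G$-Galois extension $A_0/R[[T]]$ whose $\ints/p^{n-1}\rtimes\ints/m$-subextension is $B/R[[T]]$, built from a suitably normalized Kummer-type equation for the top $\ints/p$-layer, so that $A_0/R[[T]]$ is a \emph{birational} lift of $k[[z]]/k[[t]]$ (existence of \emph{some} birational lift is Garuti's theorem, but here one needs the specific form compatible with $B$). This $A_0$ is generally not an actual lift; the failure of equality in Proposition \ref{Pdifferent}, i.e.\ the degree of the different of $A_0/R[[T]]$ localized at a uniformizer of $R$, measures the ``defect.'' One then deforms $A_0$ within the family of birational lifts of $k[[z]]/k[[t]]$ whose $\ints/p^{n-1}\rtimes\ints/m$-part is $B$, reducing the defect step by step. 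At each step the obstruction to decreasing the defect is a piece of differential (deformation) data on a component of the associated stable model; the congruences $u_i \equiv -1 \pmod m$ force this datum to be $\ints/m$-equivariant, hence of the restricted type compatible with the Hurwitz-tree formalism, while the bounds $u_{i+1} < pu_i + mp$ and $u_1 < mp$ (the ``no essential ramification'' condition adapted to $m$) keep $A_0$ close enough to a lift that the deformation is one-dimensional and tractable. The isolated differential data criterion is precisely what guarantees that the relevant space of equivariant data is rigid enough that each deformation can be carried out and that the process terminates --- this is the $\ints/m$-equivariant analogue of the main technical theorem of \cite{OW:wr}. The output is a genuine lift $A/R[[T]]$ of $k[[z]]/k[[t]]$, completing the induction.

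The hard part is exactly this last ingredient: proving termination of the successive approximation in the presence of the tame $\ints/m$-action. One must show that the equivariant deformations strictly decrease the different-defect and that no infinite regress occurs, and this is where all the hypotheses are spent --- the congruences to keep everything $\ints/m$-equivariant, the ``no essential ramification'' bounds to stay in the regime where the one-parameter guess is a good first approximation, and the isolated differential data criterion to control the obstruction spaces. Extracting from that criterion exactly the rigidity needed to run the \cite{OW:wr}-type valuation-theoretic argument equivariantly, and checking that the intermediate extensions produced always again satisfy the hypotheses so that the induction closes, is the technical crux.
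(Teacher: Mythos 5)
Your outline matches the approach the paper itself takes in \S\ref{Sapprox}: induction on $n$ via the quotient by the unique order-$p$ subgroup of $\ints/p^n$, base case from Example \ref{Ezp}/Remark \ref{Rgeneralcyclic} and Theorem \ref{Tzpzm}, and a guess-and-deform successive approximation whose progress is measured by the different criterion (Proposition \ref{Pdifferent}) and whose termination is guaranteed by the isolated differential data criterion. The paper gives only this sketch and defers the technical crux --- precisely the step you flag as the ``hard part'' --- to \cite{Ob:go} and \cite{OW:wr}, so your proposal is a correct roadmap at essentially the same level of completeness as the paper's own account.
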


\begin{remark}\label{Rsuccapprox2}
\begin{enumerate}[(i)]
\item The isolated differential data criterion is defined in
  \cite[\S1.4 and Definition
  7.23]{Ob:go}.  The
  definition is somewhat technical, so we do not discuss it here
  beyond saying that it has to do with the existence of certain logarithmic
  differential forms on $\proj^1_k$, and can be phrased as asserting
  the existence of a solution to an equidetermined system of
  (non-linear!) equations over $k$.  In the context of Theorem
  \ref{Tmetacyclicnoessram}, 
  it is satisfied if $G = D_9$, if $G = D_{p^2}$ with $u_1 = 1$, or if
  $$(u_1, \ldots, u_n) = (m-1, p(m-1), \ldots, p^{n-1}(m-1))$$
  (\cite[Propositions 8.1, 8.2, 8.4]{Ob:go}).
\item The isolated differential data criterion is related to the
  differential Hurwitz trees from \S\ref{Sdiffhurwitz}.  It is
  somewhat stronger than what should be meant by ``there exists a
  differential Hurwitz tree for $G$ whose Artin character matches that
  of $k[[z]]/k[[t]]$.''
\item The condition that the jumps be congruent to $-1 \pmod{m}$
  is required because otherwise there is a local KGB obstruction to
  lifting (\cite[Proposition 5.9]{Ob:ll}).
\item The condition $u_{i+1} < pu_i  + mp$ is also called \emph{no
    essential ramification}.
\end{enumerate}
\end{remark}

\subsection{The ``Mumford method''}\label{Smumford}

A technique that has been tremendously successfully applied to the local
lifting problem in recent years is the method of deforming a local
$G$-extension \emph{within characteristic $p$} to one that has nicer
properties, showing that one can lift this nicer extension to
characteristic zero, and then showing that this implies the original
extension can be lifted.  Oort has called this the ``Mumford method''
because a similar idea, originally due to Mumford, was used to show that
all abelian varieties over algebraically closed fields of
characteristic $p$ lift to characteristic zero.  Namely, by the
Serre-Tate theory of canonical liftings, it was known that
\emph{ordinary} abelian varieties lift to characteristic zero (see,
e.g., \cite{Ka:ST}).  Norman and Oort (\cite{NO:ma}) showed that any
characteristic $p$ abelian variety deforms to an ordinary abelian
variety, that can then be lifted by the Serre-Tate theory.  They then
showed that this implies the original abelian variety can be lifted.
In this case, ``nice'' means ordinary.  In the case of the local
lifting problem, ``nice'' will be related to having limited
ramification in some sense.  This sense might vary depending on
the group; for $\ints/p^n \rtimes \ints/m$, ``nice'' will mean ``no
essential ramification.''

\subsubsection{Equicharacteristic deformations}\label{Sequichar}

Let $k[[z]]/k[[t]]$ be a local $G$-extension.  An
\emph{equicharacteristic deformation} of $k[[z]]/k[[t]]$ is a
$G$-extension $k[[\varpi, z]]/k[[\varpi, t]]$, where $\varpi$ is a transcendental
parameter, such that the $G$-action on $k[[\varpi, z]]$ reduces to the
original $G$-action on $k[[z]]$ modulo $\varpi$.  This is the
equicharacteristic version of lifting to characteristic zero (of
course, in the equicharacteristic case we have the \emph{trivial} deformation, so
the behavior is somewhat different)!   We think of $\varpi$ as the
deformation parameter, and the generic fiber of the deformation is $k[[\varpi,
z]][\varpi^{-1}]/k[[\varpi, t]][\varpi^{-1}]$.

\begin{remark}\label{Requichardisc}
It is useful to think of $k[[\varpi, t]][\varpi^{-1}]$ as the ring of functions on the \emph{open
unit disc} over $k((\varpi))$ with parameter $t$ (as an exercise to
understand this, think about which
elements $t - a$ are units and which are not, as $a$ ranges over
$k((\varpi))$).  Observe that $k((\varpi))[[t]]$ is strictly bigger
than $k[[\varpi, t]][\varpi^{-1}]$!
\end{remark}

Let us give a nontrivial example of an equicharacteristic
deformation, which is the inspiration for all examples of
equicharacteristic deformations we will mention (see Proposition \ref{Pequicharexamples}).

\begin{example}\label{Eequidef}
Let $k[[z]]/k[[t]]$ be the $\ints/p$-extension given by taking the
integral closure of $k[[t]]$ in $$k((t))[y]/(y^p - y - t^{-N}),$$ where
$N > p$ is not a multiple of $p$.  We
claim that the integral closure $\mc{A}$ of $k[[\varpi, t]]$ in $$k((\varpi,
t))[y]/(y^p - y - t^{-p}(t - \varpi)^{-N + p})$$ is an equicharacteristic
deformation.  Setting $\varpi = 0$ clearly yields $k((z))/k((t))$ after taking
fraction fields, but we must show that $\mc{A} \cong k[[\varpi, z]]$.

To do this we use the different criterion (Proposition
\ref{Pdifferent}) along with Remark \ref{Requichar}.  It suffices to
show that the degree $\delta_s$ of the different of the original $G$-extension is
equal to the degree $\delta_{\eta}$ of the different of the generic fiber of the
deformation.  We have $\delta_s = (N+1)(p-1)$ (see Example
\ref{Ezp}).  On the generic fiber, the two ramified ideals are $(t)$
and $(t - \varpi)$.  The function $g = t^{-p}(t - \varpi)^{-N + p}$ has a
pole of order $N-p$ when expanded out in $k((\varpi))((t - \varpi))$,
and thus the ideal $(t - \varpi)$ gives a contribution
of $(N - p + 1)(p-1)$ to $\delta_{\eta}$.  On the other hand, $g$ has a pole of order $p$ when expanded out in
$k((\varpi))((t))$.  Thus, by replacing $g$ with $g + x^p - x$ for
some $x \in k((\varpi, t))$ (which doesn't change the Artin-Schreier extension), we may assume that $g$ has a pole of
order less than $p$, and thus that $(t)$ contributes \emph{at most}
$p(p-1)$ to $\delta_{\eta}$.  So $\delta_{\eta} \leq (N+1)(p-1)$.  By
Proposition \ref{Pdifferent}, we in fact have equality, and thus
$\mc{A} \cong k[[\varpi, z]]$.
\end{example} 

\begin{remark}\label{Rlessram}
Notice that the ramification jumps on the generic fiber are smaller
than the ramification jumps of the originial extension.  In fact, based on
the example above, it is an easy exercise to show that for a local
$\ints/p$-extension, one can always find an equicharacteristic
deformation such that the ramification jumps on the generic fiber are
less than $p$.
\end{remark}

\subsubsection{Lifting via equicharacteristic deformations}\label{Eequicharlifting}

In order to apply the Mumford method, we need to show that being able
to lift the generic fiber of an equicharacteristic deformation to
characteristic zero allows us to do the same for the original local
$G$-extension.  First, we must say what we mean by ``being able to
lift the generic fiber.''  Take $k[[\varpi,
z]][\varpi^{-1}]/k[[\varpi, t]][\varpi^{-1}]$ and tensor over
$k((\varpi))$ with the algebraic closure $\ol{k((\varpi))}$.  We obtain a $G$-extension of
Dedekind $\ol{k((\varpi))}$-algebras, and localizing at any branched
maximal ideal gives a $G$-extension of $\ol{k((\varpi))}[[s]]$ for
some parameter $s$ (for instance, one could have $s = t$ or $s = t - \varpi$).  This is a local $G$-extension (with the field
$\ol{k((\varpi))}$ replacing $k$).  We say that the generic fiber
lifts to characteristic zero if all of the local $G$-extensions
obtained this way lift to characteristic zero.

The following theorem says more or less that being able to lift the
generic fiber of an equicharacteristic deformation implies being able
to lift the original local $G$-extension to charcteristic zero.  The
argument comes from \cite{Po:oc} and a conversation with Pop, but was
only written in \cite{Po:oc} for $G$ cyclic.
 The papers \cite{Ob:go} and \cite{Ob:A4} use similar arguments, but do not directly cite \cite{Po:oc} since they deal with
 non-cyclic groups.  
Our statement here is intended to be citeable for general $G$.  

\begin{theorem}\label{Tequicharsetup}
Suppose that $k[[z]]/k[[t]]$ is a local $G$-extension that admits an equicharacteristic
deformation whose generic fiber lifts to characteristic zero after
base change to the algebraic closure.  Then $k[[z]]/k[[t]]$ lifts to
characteristic zero.  
\end{theorem}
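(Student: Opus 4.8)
The plan is to mimic the proof of the local–global principle (Theorem~\ref{Tlocalglobal}) and of Example~\ref{Eequidef}, but carried out over the complete discrete valuation ring $k[[\varpi]]$ and then transported to characteristic zero via the generic-fiber hypothesis. Let $k[[\varpi,z]]/k[[\varpi,t]]$ be the given equicharacteristic deformation, viewed geometrically as a $G$-cover $\mathcal{Y} \to \mathcal{Y}/G = \Spec k[[\varpi,t]]$, which we think of (Remark~\ref{Requichardisc}) as a $G$-action on the open unit disc over $k((\varpi))$ degenerating to the given $G$-action on $\Spec k[[z]]$ at $\varpi = 0$. After a suitable model-theoretic normalization—spreading out over a smooth affine $k$-curve with coordinate $\varpi$ and choosing a projective $G$-cover $\overline{\mathcal{Y}} \to \proj^1$ of the Harbater–Katz–Gabber type whose completion at the relevant point recovers the deformation—we are in a position to apply the local–global machinery \emph{fibrewise in $\varpi$}.

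First I would record that the generic fibre of the deformation, after base change to $\ol{k((\varpi))}$, is by hypothesis a finite collection of local $G$-extensions (localizations at the branched maximal ideals; several branch points may have appeared, as in Example~\ref{Eequidef} where $t=0$ splits into $t=0$ and $t=\varpi$), and \emph{each} of these lifts to characteristic zero over some complete discrete valuation ring $R'$ with residue field $\ol{k((\varpi))}$. Second, I would assemble these local lifts together with an étale lift of the complement of the branch locus—exactly as in Proposition~\ref{Ppatching} and the proof of Theorem~\ref{Tlocalglobal}—to produce a $G$-cover $\overline{\mathcal{Y}}_{R'} \to \proj^1_{R'}$ lifting $\overline{\mathcal{Y}}_{\ol{k((\varpi))}}$ to characteristic zero. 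Here one uses that $R'$ is complete, so formal patching and Grothendieck's Existence Theorem apply verbatim.

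The crucial—and I expect hardest—step is the descent/specialization: starting from a characteristic-zero lift over $R'$ (whose residue field is the \emph{large} field $\ol{k((\varpi))}$, not $k$), produce a lift over a characteristic-zero complete discrete valuation ring with residue field $k$ itself. The idea is that $R'$ may be taken of the form $W(\ol{k((\varpi))})[\zeta]$ for a suitable root of unity, and the lift, being given by finitely many equations with coefficients in $R'$, is in fact defined over a subring finite over $W(k)[[\varpi]][\zeta]$ (or a localization thereof); one then specializes $\varpi \mapsto 0$. Concretely, I would use the different criterion (Proposition~\ref{Pdifferent}, valid also in the mixed setting): a birational lift over $W(k)[[\varpi]]\{ \ldots \}$ of the \emph{whole} deformation restricts on the one hand to the characteristic-zero lift of the generic fibre (where the different has the expected degree, by Proposition~\ref{Pdifferent} applied over $R'$) and on the other hand, after reduction $\varpi = 0$, to a birational lift of the original $k[[z]]/k[[t]]$ over a characteristic-zero discrete valuation ring $R$ with residue field $k$; since the degree of the different is constant in flat families and is bounded below by $\delta_s$ with equality iff one has a genuine lift, the equality forced on the generic fibre propagates to $\varpi = 0$, giving that the reduction is a genuine lift of $k[[z]]/k[[t]]$. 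One must check that the branch points of the deformation's generic fibre do not collide with each other in a way that changes the total different (they cannot, by upper-semicontinuity of the different and the computation that $\delta_{\eta}$ on the generic fibre equals $\delta_s$), and that the lift over $R'$ descends to a ring module-finite over $W(k)[[\varpi]]$—this is the technical heart, handled by a Noetherian-approximation/spreading-out argument together with the fact that $\ol{k((\varpi))}$ is the filtered colimit of finite extensions of $k((\varpi))$, over each of which everything is already defined. Finally, specializing a uniformizer and invoking Theorem~\ref{Tlocalglobal} (or just the local statement directly) yields the desired lift of $k[[z]]/k[[t]]$ to characteristic zero.
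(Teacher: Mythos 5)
The first half of your proposal matches the paper's argument: globalize the equicharacteristic deformation to a $G$-cover of $\proj^1_{k[[\varpi]]}$ of Harbater--Katz--Gabber type, base change to $\ol{k((\varpi))}$, and use the hypothesis together with the local--global principle (Theorem \ref{Tlocalglobal}, applied over a complete discrete valuation ring $\mc{O}_1$ with algebraically closed residue field $\ol{k((\varpi))}$) to lift the generic fibre to characteristic zero. The gap is in your descent step. The element $\varpi$ is a \emph{unit} in $\ol{k((\varpi))}$, hence in $\mc{O}_1$; so ``specializing $\varpi \mapsto 0$'' is not a specialization inside $\Spec \mc{O}_1$ or any subring of it, and the lift cannot be descended to anything finite over $W(k)[[\varpi]][\zeta]$ (in $W(k)[[\varpi]]$ the element $\varpi$ is topologically nilpotent, which is incompatible with any residue-compatible map to $\mc{O}_1$). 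Equivalently: the hypothesis gives you a lift of the \emph{generic} fibre of the deformation over a ring whose residue field is $\ol{k((\varpi))}$, whereas your ``birational lift over $W(k)[[\varpi]]\{\ldots\}$ of the whole deformation'' would be a mixed-characteristic lift of the \emph{entire} family, a strictly stronger object that is not provided by the hypothesis. Passing from residue field $\ol{k((\varpi))}$ down to residue field $k$ requires composing two specializations (reduction modulo $\mf{m}_{\mc{O}_1}$, followed by reduction of $\ol{k((\varpi))}$ along an extension of the $\varpi$-adic valuation), i.e., a \emph{rank-two} valuation. The paper supplies exactly this: Pop's gluing lemma \cite[Lemma 4.3]{Po:oc} glues the characteristic-$p$ cover over the valuation ring of $\ol{k((\varpi))}$ to the characteristic-zero lift over $\mc{O}_1$ along the generic fibre of the former and the special fibre of the latter, producing a cover over a rank-two characteristic-zero valuation ring $\mc{O}$ with residue field $k$; one then descends to a finitely generated $W(k)$-subalgebra of $\mc{O}$ and invokes Lemma \ref{Ldescent} to cut down to a genuine discrete valuation ring finite over $W(k)$.

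Your appeal to the different criterion in this step also does not work as stated: Proposition \ref{Pdifferent} compares the special and generic fibres of a single extension over a rank-one complete valuation ring, and the degree of the different is not constant in flat families --- it is only semicontinuous, which is precisely the content of the inequality $\delta_s \leq \delta_{\eta}$. No different computation is needed at this stage of the paper's proof; once the glued cover over $\mc{O}$ exists with special fibre the original HKG-cover, the conclusion follows from the easy direction of the local--global principle, as you correctly note at the end.
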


\begin{proof}
Let $k[[\varpi, z]]/k[[\varpi, t]]$ be an
equicharacteristic deformation of $k[[z]]/k[[t]]$.  Let $Y \to W = \proj^1_k$ be the HKG-cover
associated to $k[[z]]/k[[t]]$.  Let $\mc{W} = \proj^1_{k[[\varpi]]}$
with coordinate $t$.  There is a $G$-cover of flat relative $k[[\varpi]]$-curves
$\mc{Y} \to \mc{W} = \proj^1_{k[[\varpi]]}$ such that $\mc{Y}
\times_{\mc{W}} \Spec k[[\varpi,t]] \to \Spec k[[\varpi, t]]$
corresponds to $k[[\varpi, z]]/k[[\varpi, t]]$ via Spec, and that this
cover is unramified outside $\Spec k[[\varpi, t]]$ (this follows from
Pop's argument deducing \cite[Theorem 3.6]{Po:oc} from \cite[Theorem 3.2]{Po:oc}).
Write $\ol{\mc{Y}} \to \ol{\mc{W}}$ for the base
change of $\mc{Y} \to \mc{W}$ to the integral closure of $k[[\varpi]]$
in $\ol{k((\varpi))}$, and let $\ol{\mc{Y}_{\eta}} \to \ol{\mc{W}_{\eta}}$ be the generic fiber of $\ol{\mc{Y}} \to \ol{\mc{W}}$. 
Since the generic fiber of $k[[\varpi, z]]/k[[\varpi, t]]$ lifts to
characteristic zero after base change to the algebraic closure by assumption, the local-global principle tells us that $\ol{\mc{Y}_{\eta}} \to \ol{\mc{W}_{\eta}}$ 
lifts to a cover $\mc{Y}_{\mc{O}_1} \to \mc{W}_{\mc{O}_1}$ over some
characteristic zero complete discrete valuation ring $\mc{O}_1$ with residue field $\ol{k((\varpi))}$.  
Then, \cite[Lemma 4.3]{Po:oc} shows that we can ``glue'' the covers
$\ol{\mc{Y}} \to \ol{\mc{W}}$ and $\mc{Y}_{\mc{O}_1} \to \mc{W}_{\mc{O}_1}$
along the generic fiber of the former and the special fiber of the
latter, in order to get a cover $Y_{\mc{O}} \to W_{\mc{O}}$ defined over a \emph{rank two} characteristic zero valuation ring $\mc{O}$ with
residue field $k$ lifting $Y \to W$ (cf.\ \cite[p.\ 319, second
paragraph]{Po:oc}).  

We now show that $Y \to W$ lifts over a characteristic
zero \emph{discrete} valuation ring. Since the $G$-cover $Y_{\mc{O}} \to W_{\mc{O}}$ can be described using
finitely many equations, it descends to a cover $Y_A
\to W_A$ over some subring $A \subseteq \mc{O}$ that is \emph{finitely generated} over
$W(k)$.  Let $\mf{m} = A \cap \mf{m}_{\mc{O}}$, where
$\mf{m}_{\mc{O}}$ is the maximal ideal of $\mc{O}$. Then $A$ is a
domain, and $A/\mf{m} \cong k$.  Furthermore, the base change of $Y_A
\to W_A$ to $A/\mf{m}$ is the original cover $Y \to W$.  By Lemma
\ref{Ldescent}, there is an ideal $I \subseteq \mf{m} \subseteq A$
such that $A/I$ is a finite extension $R$ of $W(k)$.  Base changing
$Y_A \to W_A$ to $A/I$ gives a lift of $Y \to W$ over $R$. 
Applying the easy direction of the local-global principle, we obtain a
lift of $k[[z]]/k[[t]]$ over $R$, which concludes the proof. 
\end{proof}

\begin{remark}
In practice, the affine space constraint in Theorem
\ref{Tequicharsetup} does not cause trouble.  For example, suppose
$\mc{F}$ is the family of local $\ints/p$-extensions of $k[[t]]$ with ramification
jump at most $N$, for some $N$ not divisible by $p$.  These extensions
can be parameterized by polynomials $f$ in $t^{-1}$, where $f$ has
degree at most $N$ and no terms of degree divisible by $p$.  These
coefficients vary over an affine space, and the ramification jump is
$N$ on the complement of a hyperplane.

In general, if $G$ is a $p$-group, one can use \cite[Theorem 1.2 and
Proposition 2.1]{Ha:mp} to show that affine spaces parameterize local $G$-extensions of
$k[[t]]$ whose higher ramification filtrations are subject to a given bound.
\end{remark}

\begin{remark}
In the notation of the proof of Theorem \ref{Tequicharsetup}, if one
only cares about lifting the extension 
$k[[z]]/k[[t]]$ to characteristic zero without any uniformity over
$\mc{F}$ of the discrete valuation ring $R$, then one can invoke the
completeness of the theory of characteristic zero algebraically closed valued fields
with residue characteristic $p$ (\cite[III]{Ro:ct}) to see that the
$\ol{\Frac(\mc{O})}$ is an elementary extension of
$\ol{\Frac(W(k))}$, which means that any first-order sentence in the
language of characteristic zero algebraically closed valued fields
with residue characteristic $p$ that is true in $\ol{\Frac(\mc{O})}$
is also true in $\ol{\Frac(W(k))}$.  This can be shown to include the
sentence ``$k[[z]]/k[[t]]$ has a lift over the valuation ring.''  So
$k[[z]]/k[[t]]$ has a lift over the algebraic closure of $W(k)$, which
means it has a lift over some finite extension of $W(k)$.  This is a
lift over a discrete valuation ring. 
\end{remark}

\subsubsection{Consequences for specific groups}\label{Sconsequences}

For applications to the local lifting problem we need to know: How
nice can we hope to make local $G$-extensions via equicharacteristic
deformations?  Here is the current state of knowledge.  In all cases,
assume $k[[z]]/k[[t]]$ is a local $G$-extension.

\begin{prop}\label{Pequicharexamples}
\begin{enumerate}[(i)]
\item If $G = \ints/p^n$, then there is an equicharacteristic
  deformation of $k[[z]]/k[[t]]$ whose generic fiber has no essential ramification (i.e.,
  the upper jumps $u_1, \ldots, u_m$ at any ramified point of the generic fiber
  satisfy $u_{i+1} < pu_i + p$, see Remark \ref{Rsuccapprox1}(i)).
\item If $G = \ints/p^n \rtimes \ints/m$ is center-free, then there is
  an equicharacteristic deformation of $k[[z]]/k[[t]]$ whose
  generic fiber has one branch point with inertia
  group $G$ and no essential ramification (in the sense of Remark
  \ref{Rsuccapprox2}(iii)), and the rest of the branch points have
  cyclic inertia groups.  Furthermore, the upper jumps of the
  $\ints/p^n$-subextension of the generic
  fiber at the point with inertia group $G$ are congruent to the
  original upper jumps of this subextension modulo $mp$.
\item Suppose $G = A_4$ and all $\ints/2$-subextensions $k[[v]]/k[[u]]$
  of $k[[z]]/k[[t]]$, with $k[[v]] \neq k[[z]]$, have ramification jump
  $\nu \geq 6$.  Then there is an equicharacteristic deformation of
  $k[[z]]/k[[t]]$ whose generic fiber has one branch point with
  inertia group $G$ and corresponding ramification jump $\nu - 6$,
  and all of the other branch points have inertia group $\ints/2
  \times \ints/2$.
\item Suppose $G = D_4$, and let $\nu$ be the maximal number such that
  $|G^{\nu}| > 2$ for $k[[z]]/k[[t]]$ (we can think of $\nu$ as the
  ``second upper jump'' if we count jumps with multiplicity in case the
  ramification filtration jumps from $D_4$ straight to $\ints/2$).
  If $\nu > 1$, then there is an equicharacteristic deformation of
  $k[[z]]/k[[t]]$ such that the branch points of the generic fiber with
  inertia group $G$ have corresponding upper jump less than $\nu$, and
  all of the other branch points have inertia group $\ints/4$ or
  $\ints/2 \times \ints/2$.
\end{enumerate}
\end{prop}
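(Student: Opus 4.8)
The four parts are, respectively, essentially the deformation constructions of \cite[Theorems 3.2 and 3.6]{Po:oc} (building on \cite{OW:ce}), of \cite{Ob:go}, of \cite{Ob:A4}, and of \cite{Br:D4} (used again in \cite{We:D4}); the plan is to isolate the common mechanism and then indicate the group-specific input. In every case the strategy generalizes Example \ref{Eequidef}: one presents the local $G$-extension (or, equivalently, the associated HKG-cover of $\proj^1_k$, branched only over $t=0$ and $t=\infty$) by explicit equations and then \emph{spreads} the wild branching concentrated at $t=0$ over several points, the new points sitting at small radii (positions that are powers of $\varpi$) and carrying much smaller conductors. Concretely, one writes a $G$-cover of $\proj^1_{k[[\varpi]]}$, branched only over $\Spec k[[\varpi,t]]$ and over $t=\infty$, whose special fiber is the original HKG-cover; localizing at $t=0$ produces the desired equicharacteristic deformation $k[[\varpi,z]]/k[[\varpi,t]]$ (this passage between the cover over $\proj^1_{k[[\varpi]]}$ and the local deformation is precisely the step quoted from \cite{Po:oc} in the proof of Theorem \ref{Tequicharsetup}). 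That the candidate is a genuine deformation, and not merely a birational one, I would check with the equicharacteristic different criterion (Proposition \ref{Pdifferent} together with Remark \ref{Requichar}): one shows the total degree of the different of the generic fiber equals the degree of the different of $k[[z]]/k[[t]]$, and this invariance both forces the deformation property and pins down exactly how the branch points and their conductors must be distributed.

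For part (i), $G=\ints/p^n$, the plan is to present the extension by an Artin--Schreier--Witt datum $(f_1,\dots,f_n)\in W_n(k((t)))$ whose reduced poles encode the upper jumps $u_1,\dots,u_n$, and then, proceeding up the Witt-vector levels, to replace each $f_i$ by a rational function carrying an extra factor $(t-\varpi)^{\bullet}$ (the level-one case being exactly Example \ref{Eequidef}), so that on the generic fiber the branching at $t=0$ shrinks while a new branch point at $t=\varpi$ (and finitely many further points at other small radii) absorbs the excess. The bookkeeping to be done is that these level-by-level modifications can be made simultaneously so that at \emph{every} branch point of the generic fiber the resulting upper jumps satisfy $u'_{i+1}<pu'_i+p$; that this is always possible is the content of \cite[\S6.6]{Ob:ll}/\cite{OW:ce}, with a clean uniform construction in \cite{Po:oc}, and the different criterion then certifies the result as above. (The point of engineering ``no essential ramification'' is of course that Theorem \ref{Tcyclicnoessram} lifts the generic fiber, whereupon Theorem \ref{Tequicharsetup} lifts $k[[z]]/k[[t]]$ itself.) For part (ii), $G=\ints/p^n\rtimes\ints/m$ center-free, the extra requirement is to run the same construction $\ints/m$-equivariantly, so that the deformation carries a genuine $G$-action; center-freeness is what permits this, the distinguished branch point at $t=0$ retains inertia $G$, and the new branch points (whose wild part is a ``used-up'' piece of the $\ints/p^n$-extension) acquire cyclic inertia. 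One must in addition arrange that the upper jumps of the $\ints/p^n$-subextension at the distinguished point stay congruent to the original ones modulo $mp$ (so in particular $\equiv -1\pmod m$), which is exactly the hypothesis needed to feed the deformed extension into Theorem \ref{Tmetacyclicnoessram}; this is accomplished by choosing the $(t-\varpi)^{\bullet}$-exponents appropriately, following \cite{Ob:go}.

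For parts (iii) and (iv) the plan is to exploit the normal-subgroup structure in characteristic $2$: $A_4\rhd V\cong(\ints/2)^2$ with $A_4/V\cong\ints/3$, and $D_4\rhd\ints/4$ (or $\ints/2\times\ints/2$). One deforms the $V$- (resp.\ cyclic) part, keeping the action of the quotient, so that its ramification drops --- for $A_4$ the common jump $\nu$ of the proper $\ints/2$-subextensions $k[[v]]/k[[u]]$ (with $k[[v]]\neq k[[z]]$) drops to $\nu-6$, the decrement $6$ emerging from the $A_4$-structure together with the $\ints/3$-equivariance constraint, and for $D_4$ the ``second upper jump'' drops below $\nu$ --- while the auxiliary branch points pick up inertia $\ints/2\times\ints/2$ (for $A_4$) or $\ints/4$ or $\ints/2\times\ints/2$ (for $D_4$). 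I expect this to be the hardest part: unlike the abelian and metacyclic cases, there is no single Witt-vector datum to manipulate, so one must track how a chosen deformation acts on \emph{all} the relevant subextensions at once and verify, again via the different criterion, that the conductors drop by exactly the stated amounts and that none of the auxiliary branch points collide or acquire forbidden inertia. The detailed constructions carrying this out are in \cite{Ob:A4} and \cite{Br:D4}.
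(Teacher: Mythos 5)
Your proposal is correct and takes essentially the same approach as the paper: the paper's own proof consists solely of citations to Pop (\cite[Theorem 3.6]{Po:oc}), Obus (\cite[Proposition 3.1]{Ob:go}), Obus (\cite[Proposition 3.2]{Ob:A4}), and Weaver (\cite{We:D4}) for the four parts, and you defer to the same sources while adding a plausible sketch of the underlying constructions. One small attribution point: the equicharacteristic deformation in part (iv) is due to Weaver \cite{We:D4}, with Brewis \cite{Br:D4} supplying only the $\nu = 1$ base case used later in the lifting argument, though you do cite both.
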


\begin{proof}
Parts (i), (ii), (iii), and (iv) are due to Pop (\cite[Theorem
3.6]{Po:oc}), Obus (\cite[Proposition 3.1]{Ob:go}), Obus
(\cite[Proposition 3.2]{Ob:A4}), and Weaver (\cite{We:D4}), respectively.
\end{proof}

The equicharacteristic deformations above have significant
consequences for the local lifting problem.
\begin{theorem}\label{Tmainequichar}
\ \begin{enumerate}[(i)]
\item The Oort conjecture is true.
\item Theorem \ref{Tmetacyclicnoessram} holds even when the isolated
  differential data criterion is only satisfied for $(u_1', \ldots,
  u_n')$ where each $u_i' \equiv u_i \pmod{mp}$, where $u_1 < mp$, and
  where $pu_i \leq u_{i+1} < pu_i + mp$ for $i > 1$.
\item $A_4$ is a local Oort group.
\item $D_4$ is a local Oort group.
\end{enumerate}
\end{theorem}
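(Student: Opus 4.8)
My plan for all four parts is to apply the ``Mumford method'' in the packaged form of Theorem \ref{Tequicharsetup}: given a local $G$-extension $k[[z]]/k[[t]]$, I will produce via Proposition \ref{Pequicharexamples} an equicharacteristic deformation whose generic fiber, after base change to $\ol{k((\varpi))}$ and localization at a branched prime, is assembled entirely from local extensions already known to lift to characteristic zero; Theorem \ref{Tequicharsetup} then forces $k[[z]]/k[[t]]$ itself to lift. For parts (i) and (ii) a single deformation suffices; for parts (iii) and (iv) I will iterate, inducting on a ramification invariant that the deformation strictly decreases.

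For (i), it is enough to show $\ints/p^n$ is a local Oort group: then every cyclic group is local Oort (reduce the prime-to-$p$ part as in Remark \ref{Rgeneralcyclic} and \cite[Proposition 6.3]{Ob:ll}), and the Oort conjecture follows from the local-global principle as noted in \S\ref{Slocaloort}. So I fix a local $\ints/p^n$-extension and invoke Proposition \ref{Pequicharexamples}(i) to get an equicharacteristic deformation whose generic fiber has no essential ramification; each local extension arising from that generic fiber is a local $\ints/p^{n'}$-extension with no essential ramification, hence lifts by Theorem \ref{Tcyclicnoessram}, and Theorem \ref{Tequicharsetup} finishes. For (ii), I take a local $\ints/p^n \rtimes \ints/m$-extension as in the hypothesis: its $\ints/p^n$-jumps $u_i$ are all $\equiv -1 \pmod m$ with $u_1 < mp$, and the isolated differential data criterion holds for some admissible $(u_i')$. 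Since the jumps are $\equiv -1 \pmod m$ the group $G$ is center-free (\cite[Proposition 5.9]{Ob:ll}), so Proposition \ref{Pequicharexamples}(ii) applies; I will arrange the deformation to realize the jump sequence $(u_i')$ (permissible since the proposition permits any sequence congruent to $(u_i)$ modulo $mp$ with no essential ramification), producing a generic fiber with one $G$-inertia branch point having $\ints/p^n$-jumps $(u_i')$ and no essential ramification, and all remaining branch points with cyclic inertia. The cyclic-inertia points lift by part (i); the $G$-inertia point lifts by Theorem \ref{Tmetacyclicnoessram}, whose hypotheses now hold for $(u_i')$; and Theorem \ref{Tequicharsetup} completes (ii).

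For (iii), work over an algebraically closed $k$ with $\cf(k) = 2$ and let $k[[z]]/k[[t]]$ be a local $A_4$-extension. The three $\ints/2$-subextensions $k[[v]]/k[[u]]$ with $k[[v]] \neq k[[z]]$ are conjugate under the $\ints/3$-quotient, so they share a common ramification jump $\nu$, necessarily odd (Artin--Schreier in characteristic $2$) and $\geq 1$. I induct on $\nu$. If $\nu \in \{1,3,5\}$, $k[[z]]/k[[t]]$ lifts by the explicit constructions of \cite[Propositions 5.1, 5.2]{Ob:A4}. If $\nu \geq 7$, Proposition \ref{Pequicharexamples}(iii) gives an equicharacteristic deformation whose generic fiber has one $A_4$-inertia branch point with corresponding invariant $\nu - 6 < \nu$ and all other branch points of inertia $\ints/2 \times \ints/2$; the former lifts by the inductive hypothesis (applied over $\ol{k((\varpi))}$, again algebraically closed of characteristic $2$), the latter lift since every local $\ints/2 \times \ints/2$-extension lifts (the remark following Example \ref{Ezpzp}), and Theorem \ref{Tequicharsetup} concludes. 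Part (iv) runs in parallel: with $\cf(k)=2$ and $G = D_4$, set $\nu$ to be the largest number with $|G^\nu| > 2$, induct on $\nu$, treat the base case $\nu = 1$ via the explicit lifts of Brewis and Weaver (\cite{Br:D4}, \cite{We:D4}), and for $\nu > 1$ apply Proposition \ref{Pequicharexamples}(iv), whose deformation has $D_4$-inertia branch points with invariant $< \nu$ and other branch points of inertia $\ints/4$ or $\ints/2 \times \ints/2$ — all of which lift, the first by part (i) and the rest as just noted — before concluding with Theorem \ref{Tequicharsetup}.

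The substantive content is black-boxed here: it lives in Proposition \ref{Pequicharexamples} (the construction of equicharacteristic deformations with tightly controlled ramification) and in the positive lifting inputs, namely Theorems \ref{Tcyclicnoessram} and \ref{Tmetacyclicnoessram} and the explicit base-case lifts for $A_4$ and $D_4$. Within the present assembly, the points that require care are: matching ``no essential ramification'' as delivered by Proposition \ref{Pequicharexamples} precisely to the hypotheses of Theorems \ref{Tcyclicnoessram} and \ref{Tmetacyclicnoessram}; for (ii), verifying that the deformation can actually be steered to the jump sequence $(u_i')$ for which the isolated differential data criterion is assumed — this is the most delicate bookkeeping and is exactly where the weakened hypothesis of (ii) over Theorem \ref{Tmetacyclicnoessram} is consumed; and, for (iii) and (iv), ensuring the inductions are uniform in the algebraically closed base field and terminate, which they do since $\nu$ strictly decreases to the covered base cases. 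I expect this last bundle — the compatibility checks, especially the jump-sequence bookkeeping in (ii) — to be the main obstacle, the rest being a formal consequence of the cited results.
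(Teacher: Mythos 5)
Your proposal is correct and follows essentially the same route as the paper: reduce to cyclic for (i), then for each part combine Proposition \ref{Pequicharexamples} with Theorem \ref{Tequicharsetup} and the known lifting inputs (Theorem \ref{Tcyclicnoessram}, Theorem \ref{Tmetacyclicnoessram}, the explicit $A_4$ and $D_4$ base cases, and the local Oort property of $\ints/4$ and $\ints/2\times\ints/2$), inducting on $\nu$ in (iii) and (iv). The one place you flag as delicate in (ii) -- ``steering'' the deformation to the sequence $(u_i')$ -- is in fact automatic, since a sequence with $u_1' < mp$, $u_i' \equiv u_i \pmod{mp}$, and $pu_i' \leq u_{i+1}' < pu_i' + mp$ is uniquely determined by $(u_i)$, so the generic fiber produced by Proposition \ref{Pequicharexamples}(ii) necessarily has exactly those jumps.
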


\begin{proof}
To prove (i), we first reduce to the case $G = \ints/p^n$ by \cite[Proposition 6.3]{Ob:ll}.
By Proposition \ref{Pequicharexamples}(i) and Theorem \ref{Tequicharsetup}, it is enough to show
that cyclic extensions with no essential ramification lift to
characteristic zero.  But this is Theorem \ref{Tcyclicnoessram}.

Part (ii) follows from Proposition \ref{Pequicharexamples}(ii), 
Theorem \ref{Tequicharsetup}, and part (i) of this theorem.

To prove (iii), we proceed by induction on $\nu$, where $\nu$ is as in
Propsoition \ref{Pequicharexamples}(iii).
The base cases $\nu < 6$ can be handled explicitly, see
\cite[Propositions 5.1, 5.2]{Ob:A4}.  Then (iii) follows from
Proposition \ref{Pequicharexamples}(iii), Theorem
\ref{Tequicharsetup}, and the fact that $\ints/2
\times \ints/2$ is a local Oort group for $2$ (\cite{Pa:rc}).  

The proof of (iv) proceeds by induction on $\nu$ as in (iii), but here
the base case is $\nu = 1$ which is \cite[Theorem 4]{Br:D4}, we use
Proposition \ref{Pequicharexamples}(iv), and we
also use that $\ints/4$ and $\ints/2 \times \ints/2$ are local Oort
groups, by \cite{GM:lg} and \cite{Pa:rc} respectively. 
\end{proof}

\section{Approach using deformation theory}\label{Sdeformations}
The previous
sections of this chapter have been concerned with \emph{whether or
  not} a Galois branched cover or a local $G$-extension over $k$ lifts to
characteristic zero.  In this section, we will take a more general
approach and try to understand the \emph{deformation spaces} of Galois
branched covers and local $G$-extensions.  Our first goal is to
sketch a proof of a more refined version of the local-global
principle, due to Bertin and M\'{e}zard.  This reduces the global
deformation problem to a local one.  Then, we will give some results
on local deformation spaces.

We assume familiarity with deformation theory \`{a} la Schlessinger
(\cite{Sc:fa}) throughout, and we will not generally cite specific
results.  Another good reference for basic deformation theory is
\cite[Chapter 6]{FGAexplained}.

\subsection{Setup}\label{Sdefseup}

For our global deformation problem, we start with a smooth, projective, connected
$k$-curve $Y$ acted upon by a finite group $G$ of automorphisms (when
dealing with deformation theory, it will
often be more convenient to think of things this way than in terms of
branched covers).  Let $\hat{\mc{C}}$ be the category of complete
local noetherian $W(k)$-algebras with residue field $k$, and let
$\mc{C}$ be the full subcategory consisting of \emph{finite length} $W(k)$-algebras.  A
\emph{deformation of $(Y, G)$ over
$A$} is a relative smooth $A$-curve $Y_A$ with special fiber $Y$, such
that $G$ acts on $Y_A$ by $A$-automorphisms and this action restricts
to the original $G$-action on $Y$.   We define a global deformation
functor $$D_{gl}: \mc{C} \to \Sets$$ such that $D_{gl}(A)$ is the
set of $G$-equivariant isomorphism classes of deformations of $(Y, G)$ over
$A$.  If $f: A \to B$ is a $W(k)$-algebra homomorphism, then $D_{gl}(f): D_{gl}(A) \to D_{gl}(B)$ is
given by base change.

Similarly, suppose we have an injection of a finite group $G$ into
$\Aut_k k[[z]]$ (we will call this a \emph{local $G$-action}).  For $A \in \hat{\mc{C}}$, we define a
\emph{deformation of $G \hookrightarrow \Aut_k k[[z]]$ over $A$} to be
a map $G \to \Aut_A A[[Z]]$ such that the $G$-action on $A[[Z]]$
reduces to the given $G$-action on $k[[z]]$.  We define a local
deformation functor $$D_{loc}: \mc{C} \to \Sets$$ such that
$D_{loc}(A)$ is the set of classes of morphisms $G \to \Aut_A A[[Z]]$
lifting $G \hookrightarrow \Aut_k k[[z]]$, considered up to
conjugation by elements of $\Aut_A A[[Z]]$ reducing to the identity on $k[[z]]$.

Let $D: \mc{C} \to \Sets$ be a functor such that $D(k)$ has one element.  Recall
that a \emph{miniversal deformation ring} for $D$ is a ring $R \in \hat{\mc{C}}$ such that there
is a smooth (in particular, surjective) natural transformation $\xi:
\Hom_{W(k)}(R, \cdot) \to D$ of functors on $\mc{C}$ inducing an isomorphism
$\Hom_{W(k)}(R, k[\epsilon]/\epsilon^2) \stackrel{\sim}{\longrightarrow} D(k[\epsilon]/\epsilon^2)$.  
Note that most works we cite below call this a \emph{versal} deformation ring, but we will use the term miniversal so as
not to conflict with usages of ``versal'' in the literature that mean
only that $\Hom_{W(k)}(R, \cdot) \to D$ is smooth.
If $D$ is $D_{gl}$ or $D_{loc}$ above, we will simply say that $R$ is
a miniversal deformation ring for $(Y, G)$ or $G \hookrightarrow
\Aut_k k[[z]]$ respectively.  The ring $R$ is a \emph{universal
  deformation ring} if the natural transformation $\xi$ is an
isomorphism, and in this case the functor $D$ extends to
$\hat{\mc{C}}$ and is isomorphic to the extension of $\Hom_{W(k)}(R,
\cdot)$ to $\hat{\mc{C}}$.  We say that $D$ is
\emph{pro-representable}, and that the element of $D(R)$ corresponding
to the identity morphism in $\Hom_{W(k)}(R,R)$ is the universal deformation.
Miniversal and universal deformation rings are unique up to
isomorphism when they exist.  It is not hard to show, using
Schlessinger's criteria, that both $D_{gl}$ and $D_{loc}$ have
miniversal deformation rings.  Furthermore, if $g(Y) \geq 2$, then
$D_{gl}$ has a universal deformation ring, as deformations of $(Y,G)$
have finite automorphism groups, and thus no infinitesimal automorphisms.

The \emph{tangent space} to a functor $D: \mc{C} \to \Sets$ is defined to be
$D(k[\epsilon]/\epsilon^2)$.

\subsection{The local-global principle via deformation
  theory}\label{Sdeflocalglobal} 

It is clear that a global deformation gives rise to local
deformations.  More specifically, let $Y$ be a smooth, projective,
connected $k$-curve with an action of a finite group $G$, and let
$(Y_A, G)$ be a deformation of $Y$ over $A \in \mc{C}$.  Let
$\{x_1, \ldots, x_n\}$ be the set of branch points of $Y \to Y/G$, and
for each $i$, choose an inertia group $G_i \subseteq G$ of a
ramification point $y_i$ above $x_i$ (it does not matter which point
is chosen). As we have seen at the beginning of
\S\ref{Slocalglobal}, the deformation $(Y_A, G)$ gives rise to
deformations of $G_i \hookrightarrow \Aut_k \hat{\mc{O}}_{Y, y_i}$
over $A$.  In particular, if $D_{gl}$ is the deformation functor
associated to $(Y, G)$ and $D_{loc_i}$ is the deformation functor
associated to $G_i \hookrightarrow \Aut_k \hat{\mc{O}}_{Y, y_i}$, we
have natural transformations
$$D_{gl} \to D_{loc_i}$$ for all $i$.  If $R_{gl}$ and $R_i$ are the
respective miniversal deformation rings, we obtain morphisms $R_i \to
R_{gl}$.  

If $D_{loc}$ is the direct product of the deformation functors
$D_{loc, i}$, then by general categorical principles, the miniversal
deformation ring of $D_{loc}$ is $R_{loc} := \widehat{\bigotimes}_i R_i$.  The
natural transformation of functors $D_{gl} \to D_{loc}$ thus gives
rise to a ring homomorphism
$$\phi: R_{loc} \to R_{gl}.$$
The statement of the deformation theoretic local-global principle is
as follows.

\begin{theorem}[{\cite[Corollaire 3.3.5]{BM:df}}]\label{Tdeflocalglobal}
We have $R_{gl} \cong R_{loc}[[U_1, \ldots, U_M]]$ for
some $M$, and the map $R_{loc} \to R_{gl}$ is the natural
inclusion.  In other words, the map of deformation functors $D_{gl}
\to D_{loc}$ is smooth.
\end{theorem}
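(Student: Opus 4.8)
The plan is to prove the second formulation — that the morphism of deformation functors $D_{gl} \to D_{loc}$ is formally smooth — and then deduce the first. The passage from smoothness to the statement about rings is a formal consequence of deformation theory: a smooth morphism of deformation functors induces a power series extension $R_{gl} \cong R_{loc}[[U_1,\ldots,U_M]]$ of miniversal deformation rings, where $M$ is the dimension of the relative tangent space, i.e. the kernel of $D_{gl}(k[\epsilon]/\epsilon^2) \to D_{loc}(k[\epsilon]/\epsilon^2)$. Here one uses, as recorded in \S\ref{Sdefseup}, that $R_{loc} = \widehat{\bigotimes}_i R_i$ is a miniversal deformation ring for $D_{loc} = \prod_i D_{loc_i}$.

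To prove smoothness of $D_{gl} \to D_{loc}$ I would verify the infinitesimal criterion directly, in the spirit of the patching proof of the local–global principle (\S\ref{Slocalglobal}): for every small extension $A' \twoheadrightarrow A$ in $\mc{C}$, the natural map
$$ D_{gl}(A') \longrightarrow D_{gl}(A) \times_{D_{loc}(A)} D_{loc}(A') $$
should be surjective. Concretely: given a deformation $(Y_A, G)$ over $A$ together with a compatible lift over $A'$ of each local deformation $G_i \hookrightarrow \Aut_A \hat{\mc{O}}_{Y_A, y_i}$, one must build $(Y_{A'}, G)$ over $A'$ inducing all of this. Away from the branch points the $G$-cover $V \to U$ is étale, so its deformation over $A$ extends uniquely over $A'$ by the infinitesimal form of Lemma \ref{Letalelifting}; near each branch point the prescribed lift of the local extension is handed to us; and one glues these, exactly as in Proposition \ref{Ppatching}, using surjectivity of the analogue of the map $\theta$ there (which again rests on finding a rational function on the affine curve $V_{A'}$ with prescribed principal parts at finitely many points). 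Algebraization is automatic, everything being proper over an Artinian ring. Taking $A = k$ and $A' = k[\epsilon]/\epsilon^2$ — gluing an arbitrary tuple of local first-order deformations onto the trivial deformation of the étale part — shows in particular that $D_{gl}(k[\epsilon]/\epsilon^2) \to \bigoplus_i D_{loc_i}(k[\epsilon]/\epsilon^2)$ is surjective.

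Equivalently, one can package this (this is Bertin--M\'ezard's route) as a long exact sequence: descending the equivariant problem to $X = Y/G$ yields a \emph{coherent} sheaf $\mc{F}$ on $X$ — roughly the $G$-invariant push-forward of the tangent sheaf of $Y$, suitably modified at the wild branch points — and an exact sequence whose terms in positive degree read $H^1(X,\mc{F}) \to D_{gl} \to \prod_i D_{loc_i} \to H^2(X,\mc{F})$. Since $X$ is a curve, $H^2(X,\mc{F}) = 0$; hence the obstruction map $o_{gl} \to \bigoplus_i o_i$ is injective while the tangent map is surjective, and smoothness follows from the standard tangent–obstruction criterion, with $M = \dim_k H^1(X,\mc{F})$. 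The part I expect to be the real obstacle is setting up this comparison honestly: identifying the controlling sheaf $\mc{F}$, and above all matching the obstruction theory of $D_{gl}$ near a wildly ramified $y_i$ with that of the purely local functor $D_{loc_i}$ — this is exactly where the nonabelian, characteristic-$p$ local deformation theory of $\hat{\mc{O}}_{Y,y_i}$ enters, and why $D_{loc}$ can be badly singular even though $D_{gl} \to D_{loc}$ is smooth.
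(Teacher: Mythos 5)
Your proposal is correct in outline, and your second packaging is essentially the argument the paper sketches (following Bertin--M\'ezard): smoothness of $D_{gl} \to D_{loc}$ is read off from a tangent/obstruction comparison, with the vanishing of $H^2$ of coherent (resp.\ skyscraper) sheaves on the curve $X=Y/G$ as the engine. The paper phrases this via equivariant cohomology on $Y$ rather than a descended sheaf on $X$: the tangent map $H^1(G,\mc{T}_Y) \to \bigoplus_i H^1(G_i,\Theta_i)$ is surjective with kernel the locally trivial deformations, and the obstruction map $H^2(G,\mc{T}_Y) \to \bigoplus_i H^2(G_i,\Theta_i)$ is in fact an \emph{isomorphism} (you only need injectivity, which is weaker, so your criterion still closes); both facts come out of the local-to-global spectral sequence you are implicitly invoking, and the identification of the local terms with the tangent/obstruction spaces of the purely local functors $D_{loc_i}$ is exactly the input from the local deformation theory that you correctly single out as the real work --- it is not something you can wave through, but it is a citation to the local theory, not a gap in the global argument. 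Your first route --- verifying the infinitesimal lifting criterion directly by patching the unique deformation of the \'{e}tale locus against the prescribed local lifts, as in the proof of Theorem \ref{Tlocalglobal} --- is genuinely different from what the paper does for Theorem \ref{Tdeflocalglobal}, and it is viable: over an Artinian base the gluing over the punctured formal neighborhoods is canonical because infinitesimal deformations of \'{e}tale covers are unique up to unique isomorphism, and algebraization is automatic by properness. What the patching route buys is a self-contained proof of smoothness with no cohomology; what it loses is the dimension count $M = \dim_k H^1(X,\mc{F})$ and the identification of the relative tangent space with locally trivial deformations, which the cohomological route gives for free and which is the content one actually wants from the refined local-global principle.
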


The following corollary generalizes the local-global principle of
Theorem \ref{Tlocalglobal} to complete noetherian local rings when the
genus of the curve is at least $2$.

\begin{corollary}\label{Clocalglobal}
Let $Y$ be a smooth projective $k$-curve with an action of a finite
group $G$.  Let $A$ be a complete local noetherian ring with residue
field $k$.  Let $y_1, \ldots, y_r$ be the ramification points of $Y
\to Y/G$ with inertia groups $G_1, \ldots, G_r$.  Then $(Y, G)$ has a deformation over $A$ if and only if 
each local $G_i$-action on $\hat{\mc{O}}_{Y, y_i}$ has a deformation
over $A$.  
\end{corollary}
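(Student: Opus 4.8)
The plan is to deduce the corollary formally from the structure theorem of Bertin and M\'{e}zard (Theorem \ref{Tdeflocalglobal}), using only the elementary observation that a global deformation restricts to local ones at the ramification points. Throughout, $\Hom_{W(k)}$ will denote local $W(k)$-algebra homomorphisms.

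First I would dispose of the ``only if'' direction, which needs nothing beyond what is already recalled at the start of \S\ref{Slocalglobal} and \S\ref{Sdeflocalglobal}. If $(Y_A, G)$ is a deformation of $(Y,G)$ over $A$, then completing the local ring of the relative curve $Y_A$ at $y_i$ yields an isomorphism $\hat{\mc{O}}_{Y_A, y_i} \cong A[[Z]]$ carrying a $G_i$-action reducing to the given one on $\hat{\mc{O}}_{Y, y_i} \cong k[[z]]$, i.e.\ a deformation of the local $G_i$-action over $A$. For $A \in \hat{\mc{C}}$ not of finite length, one reads this off the Artinian quotients $A/\mf{m}_A^n$ and passes to the limit. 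Functorially, this is simply the natural transformation $D_{gl} \to D_{loc} = \prod_i D_{loc,i}$ evaluated at $A$.

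For the ``if'' direction I would set up the deformation rings: let $R_{gl}$ be a miniversal deformation ring for $(Y,G)$ (a universal one when $g(Y) \ge 2$), let $R_i$ be a miniversal deformation ring for $G_i \hookrightarrow \Aut_k \hat{\mc{O}}_{Y, y_i}$, and put $R_{loc} = \widehat{\bigotimes}_i R_i$, which is miniversal for $D_{loc}$. The mechanism is the standard translation: for any deformation functor $D$ on $\mc{C}$ with miniversal ring $R$ and any $A \in \hat{\mc{C}}$, one has $D(A) \neq \emptyset$ iff $\Hom_{W(k)}(R, A) \neq \emptyset$. Indeed, a homomorphism $R \to A$ transports the miniversal object to an element of $D(A) = \varprojlim_n D(A/\mf{m}_A^n)$; conversely an element of $D(A)$ produces a map $R \to k$ which I would lift inductively along the small extensions $A/\mf{m}_A^{n+1} \to A/\mf{m}_A^n$ using smoothness of the miniversal natural transformation, getting a compatible family and hence a map $R \to A$. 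Combining this with the universal property $\Hom_{W(k)}(\widehat{\bigotimes}_i R_i, A) = \prod_i \Hom_{W(k)}(R_i, A)$ and with Theorem \ref{Tdeflocalglobal}, which gives $R_{gl} \cong R_{loc}[[U_1, \ldots, U_M]]$ with $R_{loc} \to R_{gl}$ the natural inclusion, I would conclude: $(Y,G)$ deforms over $A$ iff $\Hom_{W(k)}(R_{gl}, A) \neq \emptyset$ iff $\Hom_{W(k)}(R_{loc}, A) \neq \emptyset$ iff each local $G_i$-action deforms over $A$. The middle equivalence holds because a homomorphism out of $R_{loc}[[U_1, \ldots, U_M]]$ is a homomorphism out of $R_{loc}$ together with an arbitrary choice of images for the $U_j$ in $\mf{m}_A$ (send them to $0$ for one direction, restrict for the other).

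The step I expect to need the most care is the passage from the Artinian category $\mc{C}$, on which miniversality and smoothness are formulated, to a general complete local noetherian $A$: one must know that the deformation functors extend sensibly to $\hat{\mc{C}}$, so that a coherent system of deformations over the $A/\mf{m}_A^n$ algebraizes to a genuine deformation over $A$ --- this is automatic for $D_{gl}$ when $g(Y) \ge 2$ since then $D_{gl}$ is pro-representable, and is a standard effectivity statement in general --- and that the inductive lifting argument survives the infinite tower $\{A/\mf{m}_A^n\}$. Both are routine consequences of Schlessinger's formalism (\cite{Sc:fa}), so that, modulo these bookkeeping points, the corollary is a purely formal consequence of Theorem \ref{Tdeflocalglobal}; it is the latter theorem, not the corollary, that carries the real content.
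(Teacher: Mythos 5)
Your proposal is correct and follows essentially the same route as the paper: both directions are deduced formally from Theorem \ref{Tdeflocalglobal} by translating existence of deformations over $A$ into existence of $W(k)$-homomorphisms out of the miniversal rings, with the $U_j$ sent into $\mf{m}_A$ to pass from $R_{loc}$ to $R_{gl}$. Your extra attention to the passage from Artinian quotients to a general complete local noetherian $A$ (lifting along the tower $\{A/\mf{m}_A^n\}$ and effectivity of the resulting formal deformation) is a point the paper's proof leaves implicit, and handling it via pro-representability for $g(Y)\geq 2$ and Schlessinger's formalism otherwise is the right way to fill it.
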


\begin{proof}
The ``only if'' direction is obvious.  So suppose that each local
$G_i$-action has a deformation over $A$.  Without loss of generality,
suppose that the set $y_1, \ldots, y_s$, for some $s \leq r$, consists
of one ramification point above each branch point of $Y \to Y/G$.  By miniversality, we have a homomorphism
from $\widehat{\bigotimes}_{i=1}^s R_i  = R_{loc} \to A$, where $R_i$ is the
miniversal deformation ring corresponding to the local $G_i$-action on
$\hat{\mc{O}}_{Y, y_i}$.  By Theorem \ref{Tdeflocalglobal}, this gives
rise to a morphism $R_{gl} \to A$, by choosing the images of $U_1,
\ldots, U_M$ in the maximal ideal of $A$ arbitrarily (these choices
parameterize the different global deformations with given local
behavior).  This gives rise to a global deformation over $A$.    
\end{proof}

\subsubsection{Brief sketch of the proof of Theorem \ref{Tdeflocalglobal}}\label{Sdefproofsketch}

Maintain the notation of \S\ref{Slocalglobal}.  The key to the proof of Theorem \ref{Tdeflocalglobal} is to understand
the \emph{tangent spaces} and \emph{obstruction spaces} to the
relevant functors $D_{gl}$ and $D_{loc, i}$.  

By \cite[Th\'{e}or\`{e}me 2.2]{BM:df}, the tangent space to
$D_{loc, i}$ is isomorphic to $H^1(G_i, \Theta_i)$, where $\Theta_i$ is the
$G_i$-module $k[[z_i]]\frac{d}{dz_i}$ of formal derivations at the
ramification point $y_i$ (here $z_i$ is a local parameter of $Y$ at
$y_i$).  The obstruction space associated to $D_{loc, i}$ is $H^2(G_i,
\Theta_i)$ (\cite[Remarque 2.3]{BM:df}).  

Describing the global counterparts requires equivariant cohomology.  If $X$ is a
finite-type $k$-scheme with $G$-action and $\mc{F}$ is a
($G$, $\mc{O}_X$)-module, then we can define the \emph{equivariant
  cohomology groups} $H^q(G, \mc{F})$ to be
the right derived functors of the left-exact functor $\Gamma(X,
\mc{F})^G$.  If $\mc{T}_Y$ is the tangent sheaf of $Y$ with its
natural $G$-action, then the tangent and obstruction spaces associated
to $D_{gl}$ are $H^1(G, \mc{T}_Y)$ and $H^2(G, \mc{T}_Y)$,
respectively (\cite[Propositions 3.2.1, 3.2.3]{BM:df} --- see
\cite[Definition 6.1.21]{FGAexplained} for the definition of an
obstruction space).  

In fact, one can show that the map $\phi: D_{gl} \to D_{loc}$ of deformation
functors induces a \emph{surjection} $d\phi: H^1(G, \mc{T}_Y) \to
\bigoplus_i H^1(G_{x_i}, \Theta_i)$ on tangent spaces, whose kernel is the
tangent space for the functor of \emph{locally trivial deformations}
(\cite[Lemme 3.3.1]{BM:df}).  Furthermore, $\phi$ induces an
\emph{isomorphism} of obstruction spaces $H^2(G, \mc{T}_Y) \to
\bigoplus_i H^2(G_i, \Theta_i)$ (\cite[Lemme 3.3.2]{BM:df}).  A
standard deformation theory argument now shows that $\phi$ is
\emph{smooth}.    

By definition, $\Hom_{W(k)}(R_{gl}, \cdot)$ and $\Hom_{W(k)}(R_{loc}, \cdot)$
have the same tangent spaces as $D_{gl}$ and
$D_{loc}$, respectively.  The obstruction theories are the same as well, by
\cite[Lemma 6.3.3]{FGAexplained}.  So the same deformation theory argument shows that
$\Hom_{W(k)}(R_{gl}, \cdot)$ is smooth over $\Hom_{W(k)}(R_{loc}, \cdot)$.  Theorem \ref{Tdeflocalglobal} now follows from
\cite[Proposition 2.5(i)]{Sc:fa}.

\begin{remark}
Bertin and M\'{e}zard have proven a similar local-global principle for
equivariant deformations of reduced curves that are locally complete
intersections, but not necessarily smooth (\cite[Th\'{e}or\`{e}me
4.3]{BM:df2})
\end{remark}

\subsection{Examples of local miniversal deformation rings}\label{Sversal}
In view of Theorem \ref{Tdeflocalglobal}, understanding global
deformations is tantamount to calculating miniversal deformation rings
for local $G$-actions.  Since we will be dealing with one local action
at a time in this section, we will use $R_{loc}$ to mean the
miniversal deformation ring of a local $G$-action (this should not
cause confusion with the usage of $R_{loc}$ in
\S\ref{Sdeflocalglobal}).  If $G = \ints/m$ with $p \nmid m$, then it
is an easy exercise to see that $R_{loc} = W(k)$.  The following theorem of Bertin and M\'{e}zard gives information on
the miniversal deformation ring $R_{loc}$ for local $\ints/p$-actions.

\begin{theorem}[{\cite[Th\'{e}or\`{e}mes 4.2.8, 4.3.7,
 5.3.3]{BM:df}}]\label{Tversal}
Let $k[[z]]/k[[t]]$ be a local $\ints/p$-extension with upper jump
$N$, and let $R_{loc}$ be the miniversal deformation ring of the
corresponding local $\ints/p$-action.
\begin{enumerate}[(i)]
\item If $p = 2$, then $R_{loc} = W(k)[[x_1, \ldots, x_{(N+1)/2}]]$.
\item If $N = 1$ and $p = 3$, then $R_{loc} = W(k)$.
\item If $N = 1$ and $p > 3$, then $R_{loc}$ is isomorphic to $W(k)[[X]]/\psi(X)$, where
$$\psi(X) = \sum_{i=0}^{(p-1)/2} \binom{p-1-i}{i} (-1)^i(X+4)^{(p-1)/2 -
  i},$$ and is not formally smooth.
\item If $N > 1$ and $p$ is odd, then there is a surjection
$$R_{loc} \to W(k)[\zeta_p][[x_1, \ldots, x_{\lfloor (N+1)/p
  \rfloor}]].$$  Furthermore,
the Krull dimension of $R_{loc}$ equals $1 + \lfloor (N+1)/p \rfloor$. 
\end{enumerate}
\end{theorem}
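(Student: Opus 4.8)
The plan is to work directly with the local deformation functor $D_{loc}$ of the given $\ints/p$-action on $k[[z]]$ (in the sense of \S\ref{Sdefseup}), using the cohomological description of its tangent and obstruction spaces recalled in \S\ref{Sdefproofsketch}: the tangent space is $H^1(\ints/p,\Theta)$ and the obstruction space is $H^2(\ints/p,\Theta)$, where $\Theta=k[[z]]\frac{d}{dz}$ is the module of formal derivations carrying the (twisted) $\ints/p$-action. The first step is a purely module-theoretic computation: determine $d:=\dim_k H^1(\ints/p,\Theta)$ and $e:=\dim_k H^2(\ints/p,\Theta)$ as explicit functions of $p$ and the upper jump $N$, by decomposing $\Theta$ according to the ramification filtration $G_0=\dots=G_N=\ints/p$, $G_{N+1}=1$ and exploiting the $2$-periodicity of the cohomology of the cyclic group $\ints/p$ (so $H^1\cong\ker(N)/\mathrm{im}(\sigma-1)$ and $H^2\cong\Theta^{\ints/p}/N\Theta$, with $\sigma$ a generator and $N$ the norm). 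The values one needs are $d=(N+1)/2$ and $e=0$ for $p=2$; $d=e=0$ when $N=1$ and $p=3$; $d=e=1$ when $N=1$ and $p>3$; and $d-e=\lfloor(N+1)/p\rfloor$ in general. This computation is elementary but delicate --- the twist in $\Theta$ is exactly what allows $d\neq e$ --- and pinning down the exact values, not merely bounds, is one of the two main efforts.

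Schlessinger's theory (\cite{Sc:fa}) then presents $R_{loc}$ as $W(k)[[x_1,\dots,x_d]]/J$ with $J$ minimally generated by at most $e$ elements lying in $(x_1,\dots,x_d)^2$; moreover the presentation is forced to be a complete intersection, so $J$ needs exactly $e$ generators and $\dim R_{loc}=1+d-e$ (this last fact is part of the Bertin-M\'{e}zard analysis and rests on the periodic cohomology of $\ints/p$). Parts (i) and (ii) of Theorem \ref{Tversal} follow at once: $d=e=0$ forces $R_{loc}=W(k)$ (part (ii)), and $e=0$ forces $R_{loc}=W(k)[[x_1,\dots,x_{(N+1)/2}]]$, formally smooth over $W(k)$ (part (i)) --- one can also see part (i) by writing down an explicit versal deformation over that power series ring, built from the characteristic-zero Kummer lift of Example \ref{Ezp} together with equicharacteristic Artin-Schreier perturbations of the branch divisor.

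Part (iii), the case $N=1$ and $p>3$, is the computational heart. Here $d=e=1$, so $R_{loc}\cong W(k)[[X]]/(\psi)$ for a single power series $\psi$, well defined up to a unit and a coordinate change, and the point is to identify $\psi$ with the displayed polynomial. The method is to write down an explicit one-parameter versal deformation of the jump-$1$ action and compute, order by order in $X$, the obstruction to extending it --- equivalently, the failure of the corresponding Kummer-type lift to remain integrally closed and flat. Organizing this through the Kummer-Artin-Schreier group scheme (\S\ref{Ssekiguchi}, Example \ref{Ezp}), whose degree-$p$ isogeny is explicit, the obstruction is governed by the kernel of that isogeny, and the recursion for its coordinates reduces to the Chebyshev/Dickson-type expression $\psi(X)=\sum_{i=0}^{(p-1)/2}\binom{p-1-i}{i}(-1)^i(X+4)^{(p-1)/2-i}$. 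Finally one observes that $\psi\equiv X^{(p-1)/2}\pmod p$, so $\psi$ is --- up to a unit --- a distinguished polynomial of degree $(p-1)/2\geq 2$, $R_{loc}$ is free of rank $(p-1)/2$ over $W(k)$, and in particular $R_{loc}\not\cong W(k)$, i.e.\ it is not formally smooth over $W(k)$; for example $\psi=X^2+5X+5$ when $p=5$ and $\psi=X^3+7X^2+14X+7$ when $p=7$. (One checks further that $R_{loc}$ is the ring of integers $W(k)[\zeta_p+\zeta_p^{-1}]$ of the maximal real subfield of $W(k)\otimes\rats_p(\zeta_p)$, consistent with the Chebyshev shape of $\psi$.)

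Part (iv), $N>1$ and $p$ odd, needs two things. First, the surjection onto $W(k)[\zeta_p][[x_1,\dots,x_{\lfloor(N+1)/p\rfloor}]]$: one exhibits a deformation over that ring by taking the Kummer lift of Example \ref{Ezp} for the jump-$N$ extension and perturbing its branch divisor along the $\lfloor(N+1)/p\rfloor$ ``essential'' directions --- the monomials $T^{-j}$ with $p\mid j$ and $1\leq j\leq N$ being precisely the ones that cannot be removed by an Artin-Schreier change of variable --- which simultaneously forces $\zeta_p\in R_{loc}$ and realizes $\lfloor(N+1)/p\rfloor$ linearly independent tangent directions; miniversality and Nakayama's lemma then yield the surjection. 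Second, the Krull dimension: the target is a power series ring over the discrete valuation ring $W(k)[\zeta_p]$, so it has dimension $1+\lfloor(N+1)/p\rfloor$, giving $\dim R_{loc}\geq 1+\lfloor(N+1)/p\rfloor$; the complete-intersection identity $\dim R_{loc}=1+d-e$ together with $d-e=\lfloor(N+1)/p\rfloor$ from the first step supplies the matching upper bound. The main obstacle, as flagged, is the exact module computation of $d$ and $e$ and --- above all --- the explicit obstruction computation in part (iii) that produces $\psi$; the complete-intersection input used in (i), (ii), (iv) is the other delicate ingredient.
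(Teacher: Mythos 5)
This theorem is quoted from Bertin--M\'{e}zard and the paper gives no proof of it, so I can only measure your sketch against their argument. Your overall framework --- tangent space $H^1(\ints/p,\Theta)$, obstruction space $H^2(\ints/p,\Theta)$, Schlessinger presentation, the explicit Oort--Sekiguchi--Suwa family for the surjection in (iv), and an explicit obstruction computation producing $\psi$ in (iii) --- is indeed the shape of their proof. But two of your key inputs are wrong or unjustified, and they are exactly the ones carrying parts (i) and (iv).

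First, your claim that $e=\dim_k H^2(\ints/2,\Theta)=0$ is false: the remark immediately following Theorem \ref{Tversal} in this paper states explicitly that $H^2(\ints/2,\Theta)$ is \emph{nontrivial} (citing \cite[Proposition 4.1.1]{BM:df}), and flags the formal smoothness in (i) as surprising precisely for that reason. So part (i) cannot be deduced from vanishing of the obstruction space; one must show that every obstruction \emph{class} vanishes (equivalently, that every deformation over $A$ extends along a small extension) even though the obstruction space does not, which Bertin--M\'{e}zard do by explicit lifting in characteristic $2$. Second, and relatedly, your ``complete intersection identity'' --- that $J$ is minimally generated by \emph{exactly} $e$ elements and hence $\dim R_{loc}=1+d-e$ --- is not a consequence of Schlessinger's theory and is not ``forced'' by periodicity of cyclic cohomology; the general output is only $W(k)[[x_1,\dots,x_d]]/J$ with $J$ generated by \emph{at most} $e$ elements, giving $1+d-e\leq\dim R_{loc}\leq 1+d$. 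The $p=2$ case itself is a counterexample to ``exactly $e$'' (there $J=0$ while $e>0$). Consequently the upper bound $\dim R_{loc}\leq 1+\lfloor(N+1)/p\rfloor$ in part (iv) --- which is the substantive half of that statement, the lower bound being supplied by your Oort--Sekiguchi--Suwa surjection --- is left without proof; Bertin--M\'{e}zard obtain it by a genuine additional argument, not from an Euler-characteristic count of $d-e$. Parts (ii) and (iii) of your sketch are sound in outline (for (ii), $d=0$ plus the existence of a lift over $W(k)$ as in Example \ref{E31} does force $R_{loc}=W(k)$; for (iii), $d=e=1$ reduces the problem to identifying one power series, and your identification of $W(k)[[X]]/(\psi)$ with $W(k)[\zeta_p+\zeta_p^{-1}]$ via the Eisenstein property of $\psi$ is consistent), but the exact cohomology dimensions you quote are asserted rather than derived, and they are one of the two places where the real work lies.
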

 
\begin{remark}\label{ROSS}
Theorem \ref{Tversal}(iv) shows that the formal spectrum of the
miniversal deformation ring contains a smooth component of maximal dimension.
Bertin and M\'{e}zard call this the ``Oort-Sekiguchi-Suwa'' component.
\end{remark}

\begin{remark}
We see that when $p = 2$, the deformation space is formally smooth.
This is somewhat surprising, considering that the obstruction space $H^2(\ints/2, \Theta)$ is
nontrivial (\cite[Proposition 4.1.1]{BM:df}). In any case, combining
this result with Theorem \ref{Tdeflocalglobal} shows that the
deformation space for a hyperelliptic curve in characteristic $2$ is
formally smooth (\cite[Remarque 3.3.8]{BM:df}).  This is originally a
result of Laudal and L{\o}nsted (\cite{LL:dc})
\end{remark}

\begin{example}\label{E31}
Suppose char$(k) = 3$.  Consider the local $\ints/3$-extension
$k[[z]]/k[[t]]$ given by the integral closure of $k[[t]]$ in $$k((t))[y]/(y^3 - y -
t^{-1}),$$ with Galois action generated by $\sigma(y) = y+1$ (which
leads to $\sigma(z) = z/(z+1)$, if we set the uniformizer $z$ equal
to $y^{-1}$).  As we have seen from Example \ref{Ezp}, this can be lifted
over $R := W(k)[\zeta_3]$ by taking the integral closure $A$ of $R[[T]]$
in $$\Frac(R[[T]])[W]/(W^3 - (1 + \lambda^3T^{-1})),$$ where $\lambda =
\zeta_3 - 1$.  Here, $Z := \lambda/(W-1) \in A$ is a lift of $z$ and $A = R[[Z]]$ (Remark
\ref{Rwhichuniformizer}).  Furthermore
$\sigma(W) = \zeta_3W$ and therefore $\sigma(Z) = Z/(Z+\zeta_3)$ (where we abuse
notation and use $\sigma$ for the lift of $\sigma$ to $R[[Z]]$).  

By Theorem \ref{Tversal}(ii), however, this lift is
definable over $W(k)$!  To see how to do this, let 
$$V = \frac{\bar{\lambda} - \lambda W}{W - 1} \in R[[Z]],$$ where
$\bar{\lambda} \neq \lambda$ is Galois conjugate to $\lambda$.  One calculates $V =
\zeta_3Z - \lambda$.  If $v$ is the reduction of $V$, then $v = z$ and
we have $R[[Z]] = R[[V]]$ by Remark \ref{Ranyuniformizer}. 
A straightforward calculation yields $$\sigma(V) = \frac{V-3}{V-2},$$
which is defined over $W(k)$ and reduces to the original action
$\sigma(z) = z/(z+1)$.  So $R[[V]]/R[[T]]$ with this action is a lift
of the original local $\ints/3$-extension that is conjugate to the original lift $R[[Z]]/R[[T]]$.
\end{example}

There is not a great deal known about miniversal deformation rings of
local $G$-extensions when $G$ is not cyclic.  One case where there has
been some progress is the \emph{weakly ramified case} (i.e., when 
the second lower ramification group $G_2$ is trivial).  One reason
such extensions are of interest is the following result of Nakajima (\cite[Theorem
2(i)]{Na:pr}): Any local extension that arises from a Galois cover $X \to \proj^1_k$ where
$X$ is an \emph{ordinary} curve (i.e., the $p$-rank of
Jac$(X)$ is the genus of $X$) is weakly ramified.  This in turn
implies that $G \cong (\ints/p)^t \rtimes \ints/m$ for some $t$ and $p
\nmid m$, by basic ramification theory (\S\ref{Sramification}).  

\begin{theorem}[{\cite[Theorem 4.5]{CK:ed}}]\label{Tweakdimensions}
Let $k[[z]]/k[[t]]$ be a weakly ramified local $G$-extension with
miniversal deformation ring $R_{loc}$ and equicharacteristic miniversal
deformation ring $S_{loc} := R_{loc}/p$.  Suppose $G = (\ints/p)^t
\rtimes \ints/m$, with $p \nmid m$.  If $p \neq 2, 3$, then $S_{loc}$
has dimension $t/s - 1$, where $s = [\FF_p(\zeta_n) : \FF_p]$.  
\end{theorem}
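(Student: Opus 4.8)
The plan is to translate the statement into equivariant cohomology and then carry out a representation-theory computation. Write $\Theta = k[[z]]\frac{d}{dz}$ for the $G$-module of formal derivations at the ramification point. By the Bertin--M\'ezard description of the local deformation functor recalled before Theorem~\ref{Tdeflocalglobal} (applied to finite-length $k$-algebras rather than $W(k)$-algebras), the functor pro-represented by $S_{loc}$ has tangent space $H^1(G,\Theta)$ and an obstruction theory with values in $H^2(G,\Theta)$, both viewed as $k$-vector spaces. Hence $S_{loc}$ admits a presentation $k[[x_1,\dots,x_d]]/(f_1,\dots,f_r)$ with $d = \dim_k H^1(G,\Theta)$ and $r \le \dim_k H^2(G,\Theta)$, so that $\dim S_{loc} \ge d - \dim_k H^2(G,\Theta)$. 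The proof therefore divides into (a) computing $\dim_k H^1(G,\Theta)$ and $\dim_k H^2(G,\Theta)$, and (b) showing this lower bound is an equality, i.e.\ that $S_{loc}$ is a complete intersection of the expected codimension.

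For (a), since $p \nmid m$ the Hochschild--Serre spectral sequence for $1 \to P \to G \to \ints/m \to 1$ has $E_2^{i,j} = 0$ for $i > 0$, so $H^q(G,\Theta) \cong H^q(P,\Theta)^{\ints/m}$ for all $q$; everything reduces to the weakly ramified elementary abelian case $P = (\ints/p)^t$ together with bookkeeping of the $\ints/m$-action. The condition $G_2 = 1$ rigidifies the ramification completely: each $\ints/p$-subquotient of $P$ has a single upper jump, equal to $1$. In this situation $H^\bullet(P,\Theta)$ can be computed explicitly --- for instance from the $P$-stable filtration of $\Theta$ by the submodules $z^i\,k[[z]]\frac{d}{dz}$, or by identifying $\Theta$ with sections of the tangent sheaf of the Harbater--Katz--Gabber curve $X$ near the wild point and using that $X$ is ordinary (Nakajima's theorem, quoted above). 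Tracking the $\ints/m$-module structure (the $\FF_p[\ints/m]$-module $P^\vee \cong \FF_p^t$ is, for a faithful action, a sum of $t/s$ copies of $\FF_{p^s}$, which in particular forces $s \mid t$), taking $\ints/m$-invariants yields
\[ \dim_k H^1(G,\Theta) - \dim_k H^2(G,\Theta) = \tfrac{t}{s} - 1 . \]
It is precisely here that $p \ne 2,3$ enters: for $p \in \{2,3\}$ the Artin--Schreier deformation theory degenerates (compare Theorem~\ref{Tversal}(i)--(iii), where the $p=2$ ring is formally smooth and the $p=3$, $N=1$ ring is just $W(k)$), the binomial-coefficient expressions controlling the cohomology vanish mod $p$, and the displayed identity fails.

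The hard part is (b). One route is to write down an explicit miniversal deformation of $k[[z]]/k[[t]]$ by deforming its defining Artin--Schreier(--Witt) equations over $k[[x_1,\dots,x_d]]$, check that the obstruction to integrating such a formal family is cut out by exactly $\dim_k H^2(G,\Theta)$ equations, and verify that these form a regular sequence --- equivalently, that the quadratic part $\mathrm{Sym}^2 H^1(G,\Theta) \to H^2(G,\Theta)$ of the obstruction map is surjective (again using $p \ge 5$) and that no further relations appear in higher degree. An alternative is to go through the local--global principle of Theorem~\ref{Tdeflocalglobal}: the equicharacteristic equivariant deformation ring of the ordinary curve $X$ is a power series ring over $S_{loc}$, and since ordinary curves deform unobstructedly one can control its structure directly and then transport the complete-intersection conclusion back down to $S_{loc}$. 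Either way, combining (a) and (b) gives $\dim S_{loc} = t/s - 1$. I expect step (b) --- establishing the complete-intersection property rather than merely the inequality --- to be the main obstacle.
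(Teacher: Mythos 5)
The paper does not prove this statement; it quotes it from Cornelissen--Kato, and the remarks surrounding it indicate how the proof there actually goes: one uses that the HKG-cover attached to a weakly ramified local action has genus zero, so that $G$ acts by fractional linear transformations, and one then computes the versal ring $S_{loc}$ \emph{explicitly} by deforming those transformations and working out the relations by hand. Your outline --- compute the Euler characteristic $\dim_k H^1(G,\Theta)-\dim_k H^2(G,\Theta)$, then show $S_{loc}$ is cut out by a regular sequence of length $\dim_k H^2$ --- is a reasonable frame, and the reductions you do carry out (Hochschild--Serre to pass to $P=(\ints/p)^t$, the identification of $P$ as a free $\FF_{p^s}$-module of rank $t/s$ via the faithful character of $\ints/m$ on $G_1/G_2$) are correct. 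But both substantive halves are left as assertions, and part (b) as proposed contains steps that demonstrably fail.

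The test case $G=\ints/p$ (so $t=s=m=1$ and the claim is $\dim S_{loc}=0$) already refutes your route (i). By Theorem \ref{Tversal}(iii), $R_{loc}=W(k)[[X]]/(\psi)$ with $\psi\equiv X^{(p-1)/2}\pmod p$ (for $p=7$ one computes $\psi=X^3+7X^2+14X+7$), so $S_{loc}=k[[X]]/(X^{(p-1)/2})$. The dimension is indeed $0$, but for $p\geq 7$ the unique relation has \emph{vanishing} quadratic part: the primary obstruction map $\mathrm{Sym}^2 H^1(G,\Theta)\to H^2(G,\Theta)$ is zero, and the dimension drop comes from an obstruction of order $(p-1)/2$. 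So surjectivity of the quadratic obstruction map is not the right criterion, and verifying ``no further relations in higher degree'' is exactly the explicit computation you are trying to avoid. Route (ii) is based on a false premise: ordinary curves deform unobstructedly as abstract curves, but the \emph{equivariant} deformation functor is obstructed --- by Theorem \ref{Tdeflocalglobal} the global equivariant ring is a power series ring over $S_{loc}$, and the example above shows $S_{loc}$ is non-reduced. Finally, the identity $\dim_k H^1-\dim_k H^2=t/s-1$ is asserted rather than computed; for $t>1$ this is genuine work, since your proposed filtration of $\Theta$ by the $z^ik[[z]]\frac{d}{dz}$ has one-dimensional graded pieces with trivial $P$-action, so all the content sits in the differentials of the resulting spectral sequence rather than in its $E_1$ page.
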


\begin{remark}
In fact, Cornelissen and Kato compute $S_{loc}$ explicitly
(\cite[\S4.4]{CK:ed}).  The case where $G \cong \ints/p$ in Theorem \ref{Tweakdimensions}
is covered by Theorem \ref{Tversal}(i)--(iii).
\end{remark}

\begin{remark}
The corresponding HKG-cover to a weakly ramified local extension
$k[[z]]/k[[t]]$ is genus zero (exercise!).  This means that, after an
appropriate change of variables, the
relevant automorphisms of $k[[z]]$ can be expressed as fractional
linear transformations.  This is heavily exploited in calculating
deformation rings, and is one of the reasons the assumption of weak
ramification makes these calcuations more tractable. 
\end{remark}

\begin{remark}
Suppose $f: X \to \proj^1_k$ is a weakly ramified Galois cover, where
$g(X) \geq 2$ and char$(k) > 3$.  Let $n$ be the number of branch
points of $f$, and $w$ the number of those that are wildly branched.
Using the local-global principle (Theorem \ref{Tdeflocalglobal}), Cornelissen and Kato
show that the dimension of the equicharacteristic global miniversal
deformation ring is $$3g(X) - 3 + n + \sum_{i=1}^w
\frac{t_i}{s(n_i)},$$ where the inertia group above point $i$ is
$(\ints/p)^{t_i} \rtimes \ints/n_i$, and $s(n_i) = [\FF_p(\zeta_{n_i})
: \FF_p]$ (\cite[Main Algebraic Theorem]{CK:ed}).  The $t_i/s(n_i)$
terms can be thought of as ``correction'' terms for the wild
ramification (note that these terms vanish if the cover is tamely
ramified and one recovers the classical characteristic zero result).
In \cite{CK:ed}, this is applied to Drinfeld modular curves, and
comparison is made to an \emph{analytic} deformation functor.
Exploring this would take us somewhat far afield, so we direct the interested reader
to the (very readable) paper \cite{CK:ed}.
\end{remark}

Other papers dealing with equicharacteristic deformation of wildly
ramified covers include \cite{PZ:pr}, \cite{Pr:fw}, \cite{FM:cs}
(non-Galois case!), and \cite{CK:es}.

In addition to calculating the equicharacteristic miniversal deformation
rings for weakly ramified local extensions, we can also calculate the
general miniversal deformation ring (and thus,
whether the extension lifts to characteristic zero).

\begin{theorem}[{\cite[Th\'{e}or\`{e}me 1.2]{CM:rr}}]\label{Tweakcharacteristic}
If $R_{loc}$ is the local miniversal deformation ring of a weakly (but
wildly) ramified local $G$-extension, then the characteristic of
$R_{loc}$ is either $0$ or $p$.  In
particular, the characteristic of $R_{loc}$ is $0$ when $G \in \{\ints/p, D_p, A_4\}$,
and is $p$ otherwise.
\end{theorem}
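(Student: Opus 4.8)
The plan is to translate the statement entirely into the language of liftability. Note first that the characteristic of $R_{loc}$ is automatically $0$ or a power of $p$, since $R_{loc}$ is a complete local noetherian $W(k)$-algebra with residue field $k$, so $p$ lies in $\mathfrak{m}_{R_{loc}}$; the content is to show the power of $p$ can only be $1$, and to decide which case occurs. By miniversality, every $W(k)$-algebra homomorphism $R_{loc}\to A$ with $A$ artinian classifies a deformation of the given local $G$-action over $A$; passing to the limit, a homomorphism $R_{loc}\to R$ to a characteristic-zero discrete valuation ring with residue field $k$ produces (on $R[[Z]]=\varprojlim (R/\mathfrak{m}_R^n)[[Z]]$) a lift of $k[[z]]/k[[t]]$ over $R$, and conversely a lift over $R$ furnishes such a homomorphism, whose existence forces $p\neq 0$ in $R_{loc}$ because $p\neq 0$ in $R$. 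In the other direction, if $\operatorname{char}R_{loc}=0$ then $p$ is not nilpotent, so there is a minimal prime $\mathfrak q$ with $p\notin\mathfrak q$; then $R_{loc}/\mathfrak q$ is a characteristic-zero complete local domain (any integer prime to $p$ is a unit, and $p\notin\mathfrak q$), of Krull dimension $\geq 1$ (a zero-dimensional quotient would be $k$, of characteristic $p$), and cutting down to a coheight-one prime and normalizing yields a surjection from $R_{loc}$ onto a characteristic-zero complete discrete valuation ring with residue field $k$ (using $k=\bar k$), hence a lift to characteristic zero. Thus $\operatorname{char}R_{loc}=0$ if and only if $k[[z]]/k[[t]]$ lifts to characteristic zero.

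Next I would identify the liftable weakly (but wildly) ramified extensions. For such an extension the HKG-cover $Y\to\proj^1_k$ has genus zero, so $Y\cong\proj^1_k$, the group $G$ acts by Möbius transformations, and $G\cong(\ints/p)^t\rtimes\ints/m$ with $p\nmid m$. For such covers the KGB obstruction (Definition \ref{DKGB}) vanishes exactly when $G$ embeds in $PGL_2(\complex)$, i.e.\ when $G$ is cyclic, dihedral, $A_4$, $S_4$, or $A_5$; among cyclic-by-$p$ groups with elementary abelian $p$-Sylow these are precisely the cyclic groups, $D_p$, and (for $p=2$) $A_4$. For a cyclic group $G=\ints/pm$ the tame factor is rigid, so $R_{loc}$ coincides with the miniversal ring of its $\ints/p$-quotient, which explains the appearance of $\ints/p$ rather than $\ints/pm$ in the statement. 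In these cases $G$ is a local Oort group --- $\ints/p$ by Example \ref{Ezp}, $D_p$ by \cite{BW:ll} for odd $p$ and \cite{Pa:rc} for $p=2$, and $A_4$ by Theorem \ref{Tmainequichar}(iii) --- so $k[[z]]/k[[t]]$ lifts and $\operatorname{char}R_{loc}=0$. In every other case the weakly ramified $G$-extension carries an obstruction to lifting: the KGB obstruction of Example \ref{Enegativezpzp} when a subextension $(\ints/p)^t$ with $p^t>4$ occurs, Theorem \ref{Tzpzm} together with Remark \ref{Rdihedral} for nonabelian $\ints/p\rtimes\ints/m$ with $m\geq 3$ (since the jump $1$ is not $\equiv -1\pmod m$), and the Green--Matignon obstruction \cite{Gr:af} for abelian $G$ that is neither cyclic nor a $p$-group; hence $\operatorname{char}R_{loc}\neq 0$.

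It remains --- and this is the substantive point --- to rule out $\operatorname{char}R_{loc}=p^n$ for $n\geq 2$ in the obstructed cases, i.e.\ to show $p=0$ in $R_{loc}$. Equivalently: for a weakly ramified local $G$-extension, any deformation of the $G$-action over $W(k)/p^2$ prolongs to a lift over a characteristic-zero discrete valuation ring. I would again exploit the genus-zero structure: since $\proj^1_k$ is rigid, deforming the $G$-action amounts to deforming the finite subgroup $G\subseteq PGL_2(k)$, and the obstruction to continuing such a deformation from $W(k)/p^n$ to $W(k)/p^{n+1}$ lies in $H^2(G,\mc{T})$ and is governed by the higher ramification data of the $\ints/p$-subextensions, which is already fixed at first order; so vanishing of the first obstruction forces vanishing of all of them. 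Concretely this can be checked via explicit equations for deformations of Artin--Schreier extensions of ramification jump at most $p-1$, or via the Hurwitz-tree formalism of \S\ref{Shurwitzobstruction}, and the resulting shape of $R_{loc}$ is pinned down by matching against Cornelissen--Kato's explicit computation of $S_{loc}=R_{loc}/p$ (Theorem \ref{Tweakdimensions} and \cite{CK:ed}); combined with the first paragraph this yields $\operatorname{char}R_{loc}\in\{0,p\}$ with the two cases distributed as claimed. The main obstacle is precisely this last step --- establishing ``first-order liftability implies liftability'' for weakly ramified extensions --- which is strictly stronger than the KGB obstruction, since the latter only rules out lifts over a complete discrete valuation ring and says nothing about $W(k)/p^2$; everything else is the bookkeeping above together with the cited local-Oort and deformation-ring computations.
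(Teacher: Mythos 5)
Your first two paragraphs are a sound reduction and match the structure one would want: the equivalence ``$\operatorname{char}R_{loc}=0$ $\iff$ $k[[z]]/k[[t]]$ lifts over a characteristic-zero discrete valuation ring'' is correct (modulo small imprecisions: the normalization step gives a finite extension of a quotient of $R_{loc}$, not a surjective image, which is still enough to invoke versality; and for $G=\ints/pm$ with $m>1$ the cyclic extension does lift, so the theorem's ``and is $p$ otherwise'' has to be read as excluding the cyclic case --- your attempted explanation via Byszewski's Theorem \ref{Tdeftower} only gives $R_{\ints/pm}$ as a \emph{quotient} of $R_{\ints/p}$, not an isomorphism, but this is a statement-level quibble). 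The identification of the liftable weakly ramified groups via ``HKG-cover has genus zero, hence KGB vanishes iff $G\hookrightarrow PGL_2(\complex)$'' is clean and correct. Note that the paper itself offers no proof to compare against --- the theorem is quoted from Cornelissen--M\'{e}zard --- so your attempt must stand on its own.

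It does not, because the third paragraph is where the actual content of the theorem lives and it contains no argument. The assertion that the obstruction in $H^2(G,\mc{T})$ to prolonging a deformation from $W(k)/p^n$ to $W(k)/p^{n+1}$ ``is governed by the higher ramification data, which is already fixed at first order, so vanishing of the first obstruction forces vanishing of all of them'' is exactly the statement to be proved, restated; no mechanism is given, and there is no general principle of this shape. Indeed Theorem \ref{Tversal}(iii) already shows that for $G=\ints/p$, $N=1$, $p>3$ the ring $R_{loc}=W(k)[[X]]/\psi(X)$ is \emph{not} formally smooth, so obstructions do not all vanish even in a characteristic-zero case; conversely, nothing you have said excludes a ring such as $W(k)[[X]]/(pX,\,p^2)$, which has characteristic $p^2$, as a candidate for $R_{loc}$ in an obstructed case. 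Ruling this out requires actually computing the relations in the versal hull --- which is what Cornelissen--M\'{e}zard do, exploiting the genus-zero structure to reduce to explicit deformations of the finite subgroup $G\subseteq PGL_2$ and reading off that every relation either forces $p=0$ or cuts out a $W(k)$-flat quotient. Your proposal gestures at this (``explicit equations,'' ``matching against Cornelissen--Kato's computation of $S_{loc}=R_{loc}/p$'') but does not carry it out, and knowledge of $R_{loc}/p$ alone cannot distinguish characteristic $p$ from characteristic $p^n$. A secondary slip: your reformulation ``any deformation over $W(k)/p^2$ prolongs to a lift'' is not equivalent to ``$p\neq 0$ in $R_{loc}$ implies $\operatorname{char}R_{loc}=0$,'' since $p\in\mf{m}_{R_{loc}}^2$ is possible and then $R_{loc}$ need not surject onto $W(k)/p^2$ at all.
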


\begin{remark}\label{Rfullweakcalculations}
The exact formulas for the miniversal deformation rings above are given in
\cite[Corollaire 4.1]{CM:rr}.
\end{remark}

\begin{example}\label{Enop2}
Consider the smooth projective curve $Y$ with affine equation $$(y^p -
y)(x^p - x) = 1$$ (\cite[p.\ 240]{CM:rr}).  The group $G = (\ints/p)^2
\rtimes D_{p-1}$ acts on this as follows: the generators $\sigma$ and
$\tau$ of the normal $(\ints/p)^2$ subgroup send $x$ to $x+1$ and $y$
to $y+1$, respectively, an element $\alpha$ of order $2$ in a copy $H$
of $D_{p-1}$ inside $G$ exchanges $x$ and $y$, and a generator
$\beta$ of
the order $p-1$ cyclic normal subgroup of $H$ sends $y$ to $cy$ and
$x$ to $c^{-1}x$, for some $c$ generating $\FF_p^{\times}$. 

Viewing $Y$ as a $\ints/p$-cover of the projective $x$-line, we see that it is
ramified above the $p$ poles of $1/(x^p - x)$, and the ramification is
weak in each case (it is unramified at $\infty$, as $1/(x^p - x) = 0$
at $x = \infty$).  In fact, the inertia group $I$ inside $G$ at the
ramification point above $x= 0$ is generated by $\tau$ and $\beta$,
and is thus isomorphic to $\ints/p \rtimes \ints/(p-1)$.  Since the
lower numbering respects subgroups by definition, the local action of $I$ at this
point is weakly ramified.  This, combined with the local-global
principle, gives a KGB obstruction to
lifting the $G$-cover $Y \to Y/G$ to characteristic zero when $p > 3$ (\cite[Proposition
5.9]{Ob:ll}).  By Theorem \ref{Tweakcharacteristic}, $Y \to Y/G$ cannot even lift to
characteristic $p^n$ for any $n > 1$. 
\end{example}

\begin{remark}
In the example above, if $p \geq 41$, then one can also use the
Hurwitz bound to show that lifting is impossible.
\end{remark}

Next, we give a result on whether miniversal deformation rings
are in fact \emph{universal}.  Unsurprisingly, the
strongest results are in the weakly ramified case.

\begin{prop}[{\cite[Theorems on p.\ 879]{BC:ww}, \cite{BCK:ad}}]\label{Puniversal}
Let $k[[z]]/k[[t]]$ be a local $G$-extension with
miniversal deformation ring $R_{loc}$ and equicharacteristic miniversal
deformation ring $S_{loc} = R_{loc} / p$.  
\begin{enumerate}[(i)]
\item If $k[[z]]/k[[t]]$ is weakly ramified, then $R_{loc}$ is not universal if and only if char$(k) = 2$ and $G$
is $\ints/2$ or $(\ints/2)^2$.    
\item If $k[[z]]/k[[t]]$, is weakly ramified, then $S_{loc}$ is not
  universal for equicharacteristic deformations if and only if
  char$(k) = 2$ and $G \cong \ints/2$.
\item If $G \cong \ints/5$ and $k[[z]]/k[[t]]$ has ramification jump
  $2$ (so necessarily char$(k) = 5$), then $R_{loc}$ is universal. 
\end{enumerate}
\end{prop}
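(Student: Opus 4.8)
The plan is to treat the three parts essentially independently, since each reduces to a cohomological computation of automorphism spaces of the relevant deformation functor. Recall that a miniversal deformation ring $R$ fails to be universal precisely when the deformation functor has nontrivial infinitesimal automorphisms, i.e.\ when the automorphism group of the trivial deformation over $k[\epsilon]/\epsilon^2$ is nontrivial; equivalently, when $H^0(G, \Theta)$ is nonzero, where $\Theta = k[[z]]\frac{d}{dz}$ is the $G$-module of formal derivations (and the equicharacteristic version uses the same $\Theta$ but computes over $k$). So in each case the strategy is: (1) write down the local $G$-action explicitly, using that the HKG-cover of a weakly ramified extension has genus zero so that the action is by fractional linear transformations after a change of variable; (2) compute the space of $G$-invariant derivations, which for a faithful action on $k[[z]]$ means derivations $D$ with $gD g^{-1} = D$ for all $g \in G$; and (3) conclude universality exactly when this space vanishes.

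For part (i), I would invoke directly the theorems of Bertin–Charles \cite{BC:ww} supplemented by \cite{BCK:ad}: in the weakly ramified case the module $\Theta$ decomposes according to the ramification filtration, and $H^0(G,\Theta)$ can be shown to vanish unless $\mathrm{char}(k)=2$ and $G$ is $\ints/2$ or $(\ints/2)^2$. The point is that for $G = \ints/2$ in characteristic $2$, the action $z \mapsto z/(1+z)$ (the weakly ramified case, with jump $1$) has a nonzero invariant derivation coming from the tangent vector fixed by the involution, and similarly for $(\ints/2)^2$; in all other weakly ramified situations the genus-zero HKG-cover has enough ramification that no nonzero derivation is simultaneously invariant. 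This is exactly the content cited, so here I would just reference \cite{BC:ww} and \cite{BCK:ad} and indicate the mechanism. Part (ii) is the same computation but for the equicharacteristic functor $D_{loc}$ over $k$; the obstruction to universality is again $H^0(G,\Theta)$, but now the case $G = (\ints/2)^2$ drops out because the extra deformation parameter in characteristic $2$ that distinguished mixed from equal characteristic kills the would-be infinitesimal automorphism — concretely, the relevant invariant derivation no longer lifts compatibly over $k[\epsilon]/\epsilon^2$ in the equicharacteristic setting. Again this is precisely \cite{BCK:ad}.

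For part (iii), the argument is a small explicit verification. Here $G \cong \ints/5$ acting on $k[[z]]$ with $\mathrm{char}(k) = 5$ and ramification jump $2$; by Theorem \ref{Tversal}(iv) with $N = 2$ and $p = 5$ (so $\lfloor (N+1)/p \rfloor = 0$), the miniversal deformation ring $R_{loc}$ has Krull dimension $1$, and one sees it surjects onto $W(k)[\zeta_5]$, which also has dimension $1$; what remains is to check the surjection is an isomorphism and that there are no infinitesimal automorphisms, i.e.\ $H^0(\ints/5,\Theta) = 0$. The latter is a direct computation: an invariant derivation would correspond to a $\ints/5$-equivariant vector field on the genus-zero HKG-cover, and with a single totally ramified point of jump $2$ there is no nonzero such field (the only candidates are spanned by $z^j \frac{d}{dz}$ for small $j$, and the $\ints/5$-action permutes these nontrivially once the jump is $2 > 1$). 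The hard part — or at least the part requiring the most care — is this last universality claim in (iii): one must rule out a possible nilpotent thickening obstruction beyond the tangent-level computation, i.e.\ show the versal family over $R_{loc}$ is genuinely universal and not merely miniversal, which is why the hypothesis pins down $G$, the characteristic, and the jump so tightly. I would carry this out by exhibiting the universal deformation explicitly, as in Example \ref{E31} but with jump $2$ rather than $1$, following the computations of \cite{BC:ww}.
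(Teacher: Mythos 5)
The paper does not prove Proposition \ref{Puniversal}; it is quoted directly from \cite{BC:ww} and \cite{BCK:ad}. So your proposal has to stand on its own, and unfortunately its central mechanism is wrong.

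You assert that a miniversal deformation ring fails to be universal precisely when the trivial deformation over $k[\epsilon]/\epsilon^2$ has nontrivial infinitesimal automorphisms, i.e.\ when $H^0(G,\Theta)\neq 0$ with $\Theta = k[[z]]\frac{d}{dz}$. That is not the right criterion, and in this setting it would prove the opposite of the proposition. The group of infinitesimal automorphisms of the trivial deformation is $\Theta^G$, and this is \emph{never} zero for a local $G$-action: the $G$-invariant derivations of $k((z))$ are $k((t))\frac{d}{dt}$, and since $\frac{dt}{dz}$ vanishes to order $\delta$ (the degree of the different), every $f(t)\frac{d}{dt}$ with $v_t(f)\geq \lceil \delta/|G|\rceil$ lies in $\Theta^G$. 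In particular your claimed computation $H^0(\ints/5,\Theta)=0$ in part (iii) is false, and your heuristic in part (i) — that only $\ints/2$ and $(\ints/2)^2$ in characteristic $2$ admit a nonzero invariant derivation — is also false. If your criterion were correct, $R_{loc}$ would never be universal, contradicting all three parts of the statement.

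The correct statement from Schlessinger's theory is one-directional: absence of infinitesimal automorphisms implies a hull is universal (this is how the paper argues that $D_{gl}$ is pro-representable for $g(Y)\geq 2$), but the converse fails. What actually governs universality is condition (H4), equivalently whether automorphisms of deformations \emph{lift} along small surjections $A'\to A$ — i.e.\ whether the automorphism functor of the versal family is formally smooth. This lifting question is precisely what \cite{BC:ww} and \cite{BCK:ad} analyze, using the genus-zero HKG-model and explicit fractional-linear normal forms in the weakly ramified case, and it is where the case distinctions ($\mathrm{char}(k)=2$ with $G=\ints/2$ or $(\ints/2)^2$; the jump-$2$ case of $\ints/5$) genuinely arise. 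Your step (2)–(3) would need to be replaced by a computation of the obstruction to lifting elements of $\Theta^G$ along small extensions, not by the vanishing of $\Theta^G$ itself. As written, the argument does not establish any of (i)–(iii).
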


Lastly, we mention a potential application of understanding miniversal and
universal deformation rings, due to Byszewski.  Suppose
$k[[z]]/k[[t]]$ is a local $G$-extension, and $H$ is a \emph{normal}
subgroup of $G$ with corresponding subextension $k[[z]]/k[[s]]$.  One
would like to be able to ``stack'' a lift of the $H$-extension
$k[[z]]/k[[s]]$ above a lift of the $G/H$-extension
$k[[s]]/k[[t]]$ to form a lift of $k[[z]]/k[[t]]$.  Of course, this is
in general difficult (otherwise the Oort conjecture would follow from
the $\ints/p$ case).  However, we have the following result.

\begin{theorem}[{\cite[Definition 2.1, Theorem 2.8]{By:df}}]\label{Tdeftower}
In the context above, let $R_G$ and $R_H$ be the
respective miniversal deformation rings for the corresponding local
extensions, with $D_G$ and $D_H$ the respective
deformation functors. There is a natural $G/H$-action on $D_H$, and a natural
transformation of functors from $D_G$ to $D_H^{G/H}$ coming from restriction.  If $p
\nmid |G/H|$ and $R_H$ is universal, then this natural transformation
of functors is an isomorphism, and $R_G$ is universal.  In particular,
$R_G \cong R_H/I_H$, where $I_H$ is the ideal generated by all $gx -
x$ for $g \in G$ and $x \in R_H$ (universality of $R_H$ gives a
natural $G/H$-action on $R_H$).
\end{theorem}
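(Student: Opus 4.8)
The plan is to run the deformation-theoretic machine of \S\ref{Sdefproofsketch}: describe the tangent and obstruction spaces of $D_G$ and $D_H$, use the hypothesis $p\nmid|G/H|$ to compare them via the Hochschild--Serre spectral sequence, and then apply the standard criterion for a morphism of deformation functors to be an isomorphism. The first task is to make the $G/H$-action on $D_H$ and the natural transformation $\phi\colon D_G\to D_H^{G/H}$ explicit. Fix the local $G$-action $\iota\colon G\hookrightarrow\Aut_k k[[z]]$, so that $D_H$ is the deformation functor of $\iota|_H$. Given $\overline g\in G/H$ with a representative $g\in G$, a deformation $\rho\colon H\to\Aut_A A[[Z]]$ of $\iota|_H$, and any $\widetilde g\in\Aut_A A[[Z]]$ reducing to $\iota(g)$, set $\rho^{\widetilde g}(h)=\widetilde g\,\rho(g^{-1}hg)\,\widetilde g^{-1}$; since $H\trianglelefteq G$ and $\iota$ is a homomorphism, this reduces to $\iota|_H$, so it is again a deformation, and a short check shows that its class in $D_H(A)$ depends only on $\overline g$ and defines a $G/H$-action (\cite[Definition 2.1]{By:df}). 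Taking $\widetilde g=\rho(g)$ for a deformation $\rho\colon G\to\Aut_A A[[Z]]$ of $\iota$ yields $(\rho|_H)^{\rho(g)}=\rho|_H$, so restriction to $H$ lands in $D_H^{G/H}$; this defines $\phi$, visibly functorial in $A$.

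Now for the comparison. By \cite[Th\'{e}or\`{e}me 2.2, Remarque 2.3]{BM:df} the tangent and obstruction spaces of $D_G$ (resp.\ $D_H$) are $H^1(G,\Theta)$ and $H^2(G,\Theta)$ (resp.\ the same with $H$ in place of $G$), where $\Theta=k[[z]]\tfrac{d}{dz}$, and one checks that $\phi$ induces on these the restriction maps $\res\colon H^i(G,\Theta)\to H^i(H,\Theta)$, which land in the $G/H$-invariants. Because $G/H$ has order prime to $p$, we have $H^p(G/H,M)=0$ for all $p>0$ and all $k[G/H]$-modules $M$, so the spectral sequence $H^p(G/H,H^q(H,\Theta))\Rightarrow H^{p+q}(G,\Theta)$ degenerates and $\res$ identifies $H^i(G,\Theta)$ with $H^i(H,\Theta)^{G/H}$ in every degree. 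The same input makes $M\mapsto M^{G/H}$ exact, so $D_H^{G/H}$ has tangent space $H^1(H,\Theta)^{G/H}$ and inherits an obstruction theory with obstruction space $H^2(H,\Theta)^{G/H}$ (the obstruction to deforming a $G/H$-fixed object is $G/H$-invariant, hence may be split off equivariantly). Thus $\phi$ induces isomorphisms on tangent and on obstruction spaces, and by the standard Schlessinger-type criterion --- the one used to conclude smoothness in \S\ref{Sdefproofsketch}, here applied with matching dimensions --- $\phi$ is an isomorphism of functors.

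Finally, if $R_H$ is moreover universal, then $D_H\cong\Hom_{W(k)}(R_H,-)$ on $\hat{\mc C}$, and the $G/H$-action on $D_H$ transports to a $G/H$-action on $R_H$ by $W(k)$-algebra automorphisms; for any object $A$ (with trivial $G/H$-action), a $W(k)$-homomorphism $R_H\to A$ is $G/H$-equivariant precisely when it kills every $gx-x$, i.e.\ factors through $R_H/I_H$, so $D_H^{G/H}\cong\Hom_{W(k)}(R_H/I_H,-)$. Combined with the isomorphism $\phi$, this shows $D_G$ is pro-represented by $R_H/I_H$; since the miniversal deformation ring of a pro-representable functor is its unique universal ring, $R_G\cong R_H/I_H$ and $R_G$ is universal. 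I expect the main obstacle to be the bookkeeping in the middle paragraph: verifying cleanly that $\phi$ genuinely induces the restriction maps on $H^i$, and that $D_H^{G/H}$ really carries an obstruction theory with obstruction space $H^2(H,\Theta)^{G/H}$ --- the prime-to-$p$ hypothesis is exactly what lets one average obstruction classes into equivariant form, and getting this right is more delicate than the routine algebra of $R_H/I_H$.
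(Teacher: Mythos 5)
First, a point of reference: the paper does not prove Theorem \ref{Tdeftower} --- it is quoted from Byszewski \cite{By:df} (Definition 2.1 and Theorem 2.8 there) with no argument given --- so there is no in-paper proof to compare yours against. Judged on its own terms, your strategy is the natural one and most of its ingredients are right: the conjugation action of $G/H$ on $D_H$ is well defined (your check that the class of $\rho^{\tilde g}$ depends only on $\bar g$ is the correct one, using normality of $H$ and replacing $\tilde g$ by $\tilde g\rho(h_0)$), restriction does land in the invariants, and the hypothesis $p\nmid|G/H|$ enters exactly where you say: vanishing of $H^{>0}(G/H,M)$ for $k[G/H]$-modules identifies $H^i(G,\Theta)$ with $H^i(H,\Theta)^{G/H}$ via restriction and lets you average lifts into $G/H$-equivariant form, and universality of $R_H$ is what makes $D_H^{G/H}\cong\Hom_{W(k)}(R_H/I_H,-)$ legitimate.

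The step I would push back on is the sentence ``by the standard Schlessinger-type criterion \ldots applied with matching dimensions, $\phi$ is an isomorphism of functors.'' Isomorphisms on tangent spaces together with injectivity on obstruction spaces is the standard criterion for \emph{smoothness} of $\phi$, hence for surjectivity and for an isomorphism of \emph{hulls}; it does not by itself give injectivity of $D_G(A)\to D_H^{G/H}(A)$, and two deformation functors with isomorphic hulls need not be isomorphic. To close this you need the pro-representability of the target --- which you only establish in the following paragraph. Once $D_H^{G/H}\cong\Hom_{W(k)}(R_H/I_H,-)$ is in place, the composite $h_{R_G}\to D_G\to D_H^{G/H}$ is a smooth, tangent-bijective morphism of pro-representable functors, hence an isomorphism, and since the hull map $h_{R_G}\to D_G$ is surjective, $\phi$ is forced to be bijective. (Alternatively, prove injectivity directly: if two $G$-deformations become conjugate after restriction to $H$, the set of conjugating automorphisms is a torsor under a pro-$p$ group with $G/H$-action, and vanishing of the relevant $H^1(G/H,\cdot)$ produces a $G$-equivariant conjugator.) So the proof is essentially complete after reordering --- identify $D_H^{G/H}$ with $\Hom_{W(k)}(R_H/I_H,-)$ first, then prove smoothness of $\phi$, then conclude --- but as written the middle paragraph asserts more than the cited criterion delivers.
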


In the weakly ramified case, Theorem \ref{Tdeftower} gives an alternate
way of recovering some of the miniversal deformation rings from Remark
\ref{Rfullweakcalculations} (\cite[Proposition 2.10]{By:df}).

\section{Open Problems}\label{Sopen}
Some open problems are collected below.  Anything called a ``conjecture''
is something that I \emph{strongly} believe to be true.  If I am less
confident, I will use the word ``question.''  Some of the questions
are open-ended.  Unless otherwise mentioned, $k$ is an
algebraically closed field of characteristic $p$.
  
\subsection{Existence of local lifts}

Recall the the list of local Oort groups is (at most) the cyclic
groups, the dihedral groups $D_{p^n}$, and $A_4$ for $p=2$.
The most basic question about the local lifting problem is 

\begin{question}\label{Sstrongoort}
Is each group above a local Oort group?
\end{question}

As we have seen in \S\ref{Ssummary}, this question is only open for
$D_{p^n}$ where $n > 1$ (and is known for $D_4$ and $D_9$).  It has
been referred to as the ``stong Oort conjecture'' (\cite[Conjecture
1.2]{CGH:og}).

For groups that have obstructions to being local Oort (for instance, the KGB
obstruction), we can still ask if the KGB obstruction is the only
thing that goes wrong.  The following conjecture of mine
generalizes Theorem \ref{Tmetacyclicnoessram}.
\begin{conjecture}\label{Cobus}
The KGB obstruction is the only obstruction to the local lifting
problem for local $G$-extensions where $G$ has \emph{cyclic} $p$-Sylow
subgroup.  In particular, $D_{p^n}$ is a local Oort group when $p$ is odd.
\end{conjecture}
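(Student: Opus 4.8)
The plan is to reduce the conjecture, via the Mumford method, to a purely combinatorial/differential existence statement, and then attack that statement. First the reductions. By Remark \ref{Rsolvable} a group $G$ with cyclic $p$-Sylow subgroup admitting a faithful local extension is of the form $\ints/p^n\rtimes\ints/m$ with $p\nmid m$; if no faithful local $G$-extension exists there is nothing to prove. If the semidirect product is trivial then $G\cong\ints/(p^nm)$ is cyclic and the conjecture follows from the Oort conjecture (Theorem \ref{Tmainequichar}(i)). So assume $G$ is nonabelian, and let $k[[z]]/k[[t]]$ be a local $G$-extension with vanishing KGB obstruction. Since $p\nmid m$ and the conjugation action is faithful, $m\mid p-1$, so $p\equiv 1\pmod m$; combining this with the criterion of \S\ref{Sweak} (vanishing of the KGB obstruction governs the first jump of the $\ints/p^n$-subextension modulo $m$) forces all the upper jumps $u_1,\dots,u_n$ of the $\ints/p^n$-subextension to satisfy $u_i\equiv -1\pmod m$, and forces $G$ to be center-free. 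For $D_{p^n}$ ($m=2$, $p$ odd) the congruence $u_i\equiv -1\pmod 2$ is automatic (Remark \ref{Rdihedral}), so the ``in particular'' clause will follow once the general statement is proved. Thus it suffices to show: every center-free local $\ints/p^n\rtimes\ints/m$-extension whose $\ints/p^n$-subextension has all upper jumps $\equiv -1\pmod m$ lifts to characteristic zero.

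Next I would run the Mumford method. By Proposition \ref{Pequicharexamples}(ii), $k[[z]]/k[[t]]$ admits an equicharacteristic deformation whose generic fiber has one branch point with inertia group $G$, no essential ramification, and upper jumps of its $\ints/p^n$-subextension congruent to $u_1,\dots,u_n$ modulo $mp$ (hence still $\equiv -1\pmod m$), all other branch points having cyclic inertia. By Theorem \ref{Tequicharsetup} it is enough to lift this generic fiber after base change to an algebraic closure; the branch points with cyclic inertia lift by Theorem \ref{Tmainequichar}(i), so by the local-global principle we are reduced to the local statement that a center-free local $\ints/p^n\rtimes\ints/m$-extension with no essential ramification and upper jumps $\equiv -1\pmod m$ lifts to characteristic zero. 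This is exactly the conclusion of Theorem \ref{Tmetacyclicnoessram}, with the single caveat that one needs the ``isolated differential data criterion'' for the jump sequence --- and by Theorem \ref{Tmainequichar}(ii) it is enough to have it for the unique minimal representative of that sequence in its congruence class modulo $mp$. Consequently the whole conjecture reduces to: for every tuple of residues $(\bar u_1,\dots,\bar u_n)$ modulo $mp$ with each $\bar u_i\equiv -1\pmod m$, the isolated differential data criterion holds for the minimal jump sequence in that class.

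Proving this last statement is the main obstacle, and it is where all the difficulty lies. The criterion (Remark \ref{Rsuccapprox2}(i)) amounts to the existence of a solution to an equidetermined nonlinear system over $k$, encoding a tower of logarithmic differential forms on $\proj^1_k$ whose zero and pole orders are prescribed by the Artin characters of the successive $\ints/p$-subquotients; for $n=1$ it is known, being essentially the Bouw--Wewers--Zapponi theorem (Theorem \ref{Tzpzm}), and it has been checked by hand for $D_9$, for $D_{p^2}$ with $u_1=1$, and for the sequence $(m-1,p(m-1),\dots,p^{n-1}(m-1))$. The natural line of attack is induction on $n$: propagate a solution realizing the $\ints/p^{n-1}$-level data to the $\ints/p^n$-level, the new pole introduced at each step being controlled by the bound $u_{i+1}<pu_i+mp$; since the system is equidetermined a solution is ``expected'' to exist, and the real work is to prove the solution variety is nonempty. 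I expect this to require either an explicit construction --- perhaps obtaining the forms from Kato's rank-$1$ Swan conductors as in Brewis's thesis (cf.\ Remark \ref{Rartin2}), where the cyclicity of the $p$-Sylow keeps the relevant data genuinely one-dimensional --- or a degeneration in which some of the $mp$ branch points are allowed to collide, reducing the number of moving poles. A built-in warning is that for non-cyclic $p$-Sylow subgroup the analogous existence statement is \emph{false} (Pagot's non-existence of $E_{p,2}$, Example \ref{Epagotobstruction}), so any argument must genuinely exploit the cyclicity of the $p$-Sylow and cannot be purely numerical. A complementary route worth pursuing in parallel is to bypass Theorem \ref{Tmetacyclicnoessram} and extend the successive-approximation machinery of \cite{OW:ce} and \cite{Ob:go} directly, where the same differential-data question appears as the input needed to start the induction on $p$-power degree and \cite{OW:wr} supplies the convergence; either way the mathematical crux is the existence statement isolated above.
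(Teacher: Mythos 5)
The statement you are trying to prove is a \emph{conjecture}, and the paper does not prove it; it appears in \S\ref{Sopen} as an open problem. Your ``proof'' is in fact exactly the reduction that the paper itself records in the paragraph following Conjecture \ref{Cobus}: combining Theorem \ref{Tmainequichar}(ii) (i.e.\ Proposition \ref{Pequicharexamples}(ii) plus Theorem \ref{Tequicharsetup} plus the Oort conjecture for the cyclic branch points) with Theorem \ref{Tmetacyclicnoessram}, the conjecture reduces to showing that the isolated differential data criterion holds for every admissible jump sequence $(u_1,\dots,u_n)$ with $u_i\equiv -1\pmod m$, $u_1< mp$, and $pu_i\le u_{i+1}< pu_i+mp$. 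Your preliminary steps (reduction to $G\cong\ints/p^n\rtimes\ints/m$ via Remark \ref{Rsolvable}, disposal of the cyclic case via the Oort conjecture, and the translation of KGB-vanishing into the congruence condition on the upper jumps and center-freeness) are all correct and consistent with \S\ref{Sweak} and Remark \ref{Rsuccapprox2}(iii).

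The genuine gap is that the existence statement you isolate at the end --- the isolated differential data criterion for all such tuples --- is precisely the open content of the conjecture, and nothing in your proposal establishes it. It is currently known only for $n=1$ (where it is subsumed by Theorem \ref{Tzpzm}), for $D_9$, for $D_{p^2}$ with $u_1=1$, and for the geometric progression $(m-1,p(m-1),\dots,p^{n-1}(m-1))$ (Remark \ref{Rsuccapprox2}(i)). Your suggested attacks (induction on $n$ propagating differential data, Kato's Swan conductors as in Remark \ref{Rartin2}, degeneration of branch points) are plausible directions but are not carried out, and the ``equidetermined, hence a solution is expected'' heuristic is not a proof --- as you yourself note, the analogous existence statement fails for non-cyclic $p$-Sylow subgroups (Example \ref{Epagotobstruction}), so any genuine argument must locate where cyclicity enters. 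In short: the reduction is sound and matches the paper's own strategy, but the conjecture remains unproven because its mathematical crux is untouched.
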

Recall that the local KGB obstruction vanishes exactly when all the
upper jumps $u_1, \ldots, u_n$ of the $\ints/p^n$-subextension of the
local $G$-extension $k[[z]]/k[[t]]$ are congruent to $-1 \pmod{m}$.
By Theorem \ref{Tmainequichar}(ii), one way of proving Conjecture
\ref{Cobus} is by showing that the isolated differential data
criterion holds for all such $(u_1, \ldots, u_n)$ where $u_1 \leq m_p$ and
$pu_i \leq u_{i+1} \leq pu_i + mp$.  Since the criterion is about
solving equidetermined systems of equations over an algebraically
closed field (Remark \ref{Rsuccapprox2}), its truth is
plausible. Furthermore, Conjecture \ref{Cobus} is true whenever the $p$-Sylow subgroup of
$G$ is $\ints/p$, by Theorem \ref{Tzpzm}.

The following question is much more speculative.
\begin{question}\label{Cpgroup}
Is every $p$-group a weak local Oort group?
\end{question}
Any $p$-group $G$ is a so-called \emph{Green-Matignon group}
(\cite[Definition 1.7]{CGH:ll}), which means, by \cite[Theorem
1.8]{CGH:ll}, that there is at least one local
$G$-extension whose Bertin obstruction vanishes (\cite{Be:ol} --- this
is a weaker obstruction than the KGB obstruction).  
However, the set of $p$-groups is too vast and our evidence in \S\ref{Ssummary} too
sparse to venture an opinion on the truth of Question
\ref{Cpgroup}.  Indeed, it is not even known for $Q_8$!  On the other
hand, with the proof of Brewis and Wewers that
$D_4$ is weak local Oort, one can be more confident in the following
conjecture.

\begin{conjecture}\label{CD2n}
The group $D_{2^n}$ is a weak local Oort group for $2$.
\end{conjecture}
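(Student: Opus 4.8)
The plan is to prove this by induction on $n$, with base cases $n=1$ (the group $\ints/2\times\ints/2$, which is a local Oort group for $2$ by \cite{Pa:rc}) and $n=2$ (the group $D_4$, shown to be weak local Oort in \cite{Br:D4} and local Oort in \cite{We:D4}) already available. For the inductive step I would exploit the central extension $1\to\ints/2\to D_{2^n}\to D_{2^n}/Z\cong D_{2^{n-1}}\to 1$. Starting from a liftable local $D_{2^{n-1}}$-extension, one uses \cite[Lemma 2.10]{CGH:og} to produce a local $D_{2^n}$-extension $k[[z]]/k[[t]]$ whose $D_{2^{n-1}}$-quotient is (related to) the given one; since $D_{2^n}$ appears on the list of local KGB groups (Theorem~\ref{TKGB}), there is no KGB obstruction to lifting $k[[z]]/k[[t]]$, so whatever obstruction might remain is of the finer (differential) Hurwitz tree type. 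The freedom to choose both the $D_{2^{n-1}}$-extension and its enlargement lets us demand that $k[[z]]/k[[t]]$ have ramification filtration as short as possible; this minimality is what the rest of the argument exploits.

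To actually produce the lift I would run the Mumford method (Theorem~\ref{Tequicharsetup}). The technical heart — and the natural generalization of Weaver's Proposition~\ref{Pequicharexamples}(iv) from $D_4$ to $D_{2^n}$ — is to construct an equicharacteristic deformation of $k[[z]]/k[[t]]$ whose generic fiber has exactly one branch point with inertia group the full $D_{2^n}$, carrying minimal ramification, while every other branch point has inertia a \emph{proper} subgroup of $D_{2^n}$, i.e.\ cyclic, $\ints/2\times\ints/2$, or $D_{2^j}$ with $j<n$. The cyclic points then lift by the Oort conjecture (Theorem~\ref{Tmainequichar}(i)), the $\ints/2\times\ints/2$ points by \cite{Pa:rc}, the $D_4$ points by \cite{We:D4}, and the $D_{2^j}$ points with $2<j<n$ by the inductive hypothesis. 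By Theorem~\ref{Tequicharsetup} this reduces the entire conjecture to lifting a single, maximally unramified, local $D_{2^n}$-extension.

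For that residual extension I would try two complementary tactics. The first is explicit: write down Kummer-type equations over a suitable finite extension of $W(k)[\zeta_{2^n}]$, presenting (the function field of) the $D_{2^n}$-cover as the Galois closure of a tower of wild $\ints/2$-layers in the style of Example~\ref{Ezpzp} and of the explicit Green--Matignon $D_p$-lift, and then verify via the different criterion (Proposition~\ref{Pdifferent}) that the resulting birational lift is an honest lift. The second is to build a differential Hurwitz tree for $D_{2^n}$ realizing the Artin character of the minimally ramified extension, along the lines of \S\ref{Sdiffhurwitz}; for minimal ramification the underlying tree should have essentially one non-leaf vertex, which makes the required differential-data existence statement the cleanest possible instance of the general problem.

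The main obstacles are precisely steps two and three. Weaver's deformation construction for $D_4$ relies on that group being close to weakly ramified — its relevant automorphisms are essentially M\"obius transformations — and this structure is simply absent for $n\ge 3$, since no local $D_{2^n}$-extension with $n\ge 2$ is weakly ramified; carrying out the deformation in general means controlling the higher ramification filtration of a genuine $2$-group action as the deformation parameter varies, which is the crux. And exhibiting even one explicit lift of the minimal $D_{2^n}$-extension is exactly the kind of computation that \emph{fails} for the generalized quaternion groups (Example~\ref{Ebrewiswewers}), so one must first check that the chosen extension carries no Hurwitz tree obstruction — plausible given its place on the KGB list, but not automatic. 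I expect the deepest difficulty to be the equivariance requirement: the lift must carry the full dihedral symmetry, not merely its index-$2$ cyclic subgroup (whose liftability is already the proven Oort conjecture), and unlike the case of $D_p$ with $p$ odd — where the extra involution is tame and can be imposed freely — here the involution is wild and interacts nontrivially with the wild cyclic part.
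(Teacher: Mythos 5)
The statement you are trying to prove is Conjecture \ref{CD2n}: it is listed among the open problems in \S\ref{Sopen}, and the paper offers no proof of it --- the only support given is the remark that Brewis's result for $D_4$ makes the conjecture plausible. So your proposal cannot be measured against a proof in the paper; it can only be judged as a research program, and as such it is not a proof. You yourself flag the two central steps --- the generalization of Weaver's equicharacteristic deformation (Proposition \ref{Pequicharexamples}(iv)) from $D_4$ to $D_{2^n}$, and the lifting of the residual minimally ramified local $D_{2^n}$-extension, whether by explicit Kummer-type equations or by constructing a differential Hurwitz tree --- as unestablished. Those two steps are precisely the mathematical content of the conjecture, so the proposal reduces the open problem to two other open problems rather than resolving it.

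There is also a concrete logical gap in the inductive step as you set it up. Theorem \ref{Tequicharsetup} requires that \emph{every} local extension arising at a branch point of the generic fiber of your equicharacteristic deformation lift to characteristic zero. For the branch points with inertia $D_{2^j}$, $2<j<n$, you invoke the inductive hypothesis, but that hypothesis is only that $D_{2^j}$ is \emph{weak} local Oort, i.e.\ that \emph{some} local $D_{2^j}$-extension lifts; it says nothing about the particular $D_{2^j}$-extensions your deformation produces. To make the Mumford-method step work you would need those inertia groups to be (full) local Oort groups --- which for $D_{2^j}$ with $j\ge 3$ is exactly the open ``strong Oort conjecture'' of Question \ref{Sstrongoort} --- or else you would need to strengthen the deformation statement so that only cyclic, $\ints/2\times\ints/2$, and $D_4$ inertia occur away from the distinguished point (these being the groups known to be local Oort, by the Oort conjecture, \cite{Pa:rc}, and \cite{We:D4} respectively). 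Note also that for the \emph{weak} statement you are after, the whole deformation apparatus is optional: it suffices to exhibit a single liftable local $D_{2^n}$-extension, and neither of your two tactics for doing so (explicit equations checked by Proposition \ref{Pdifferent}, or a differential Hurwitz tree with the right Artin character) is carried out; the quaternion example \ref{Ebrewiswewers} shows that vanishing of the KGB obstruction alone does not guarantee such a construction exists.
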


Lastly we mention the following question (see \S\ref{Sdiffhurwitz}).

\begin{question}\label{Qdiffhurwitz}
For what groups $G$ can one give an explicit definition of the differential Hurwitz tree obstruction?
\end{question}
This should not be extremely difficult when the $p$-Sylow subgroup of
$G$ is abelian, and Example \ref{Epagotobstruction} provides some
guidance.  It may be trickier to obtain a clean expression for the
differential Hurwitz tree obstruction for arbitrary groups $G$.

\subsection{Rings of definition}

The proofs that $\ints/p$ and $\ints/p^2$ are local Oort groups
give explicit formulas for the lifts, and in the case of $\ints/p$
(resp.\ $\ints/p^2$), it is shown that lifting is possible over
$W(k)[\zeta_p]$ (resp.\ $W(k)[\zeta_{p^2}]$) by \cite{SOS:ask} (resp.\
\cite{GM:lg}), where $k$ is the field
of the local extension.  These explicit formulas
come from the Sekiguchi-Suwa theory (\S\ref{Ssekiguchi}).  

\begin{question}\label{Qoortspecific}
Can all local $\ints/p^n$-extensions over $k$ be lifted over $W(k)[\zeta_{p^n}]$?
\end{question}

Sekiguchi-Suwa theory fundamentally takes place over
$W(k)[\zeta_{p^n}]$, so if one could somehow lift all
$\ints/p^n$-extensions using Sekiguchi-Suwa theory, then one would get
a positive answer to Question \ref{Qoortspecific}.  Alternatively, one
could try to be more careful about the coefficients that show up in
the current proof (\S\ref{Sapprox}), but this seems quite hard.  We
mention that a ``weak'' version of Question
\ref{Qoortspecific}, has a positive answer.  Namely, for any $n$,
there exists a local $\ints/p^n$-extension over $k$ that can
be lifted over $W(k)[\zeta_{p^n}]$ (\cite[Theorem 4.2]{Gr:dc}, and
implicit in \cite[p.\ 280]{GM:op}).

\subsection{Moduli/deformations/geometry of local lifts}

We begin with a question of Gunther Cornelissen (\cite{AutQuestions}).

\begin{question}\label{Qwhichchar}
Is there a local $G$-extension over $k$ that lifts to a ring of
characteristic $p^n$ for $n > 1$, but does not lift to characteristic
zero?  That is, can the miniversal deformation ring of a local
$G$-extension have characteristic other than $0$ or $p$?
\end{question}

As we have seen in Theorem \ref{Tweakcharacteristic}, this is not possible for a
\emph{weakly ramified} local $G$-extension.

We can also ask about obstructions to lifting to non-prime characteristic.
\begin{question}\label{Qlargecharobstruction}
Is it possible to write down any general obstruction to lifting a
local $G$-extension to characteristic $p^n$ (for some $n > 1$), in the
spirit of the KGB or Hurwitz tree obstructions?
\end{question}

Of course, if the answer to Question \ref{Qwhichchar} is negative,
then obstructions to lifting to characteristic zero work equally well
as obstructions to lifting to characteristic $p^n$ for $n > 1$.

In another direction, we have seen in Example \ref{E31} that the
relationship between Artin-Schreier equations, Kummer equations, and deformation
rings of local $\ints/p$-actions is not straightforward
(although Bertin and M\'{e}zard's proof of Theorem \ref{Tversal}(iv)
is based on a partial understanding).  Having a Kummer equation for a lift of
a $\ints/p$-extension allows one to write down its Hurwitz tree and
the geometry of its branch locus. 

\begin{question}\label{Qdefvsartinschreier}
In the case where $G = \ints/p$, can one give an explicit link between
deformation parameters and Hurwitz trees of lifts?  To what extent can
Hurwitz trees be used to distinguish isomorphism classes of lifts?
Can Hurwitz trees be used to identify whether or not a deformation lies in the
Oort-Sekiguchi-Suwa component (Remark \ref{ROSS})?   
\end{question}

An answer to the following question would reduce the inexplicitness
of the Mumford method.
\begin{question}\label{Qgeometry}
Suppose a local $G$-extension is shown to lift to characteristic zero using
the Mumford method via a specific equicharacteristic
deformation.  Can one say anything about the geometry of the branch
locus of the resulting lift?  In particular, can one say anything
about the resulting Hurwitz tree?
\end{question}
It would be natural to attack Question \ref{Qgeometry} by first
assuming $G \cong \ints/p$, where the structure of Hurwitz trees is well-understood.\\

Saidi has called the following conjecture the ``Oort conjecture revisited'' (\cite{Sa:fl}).
\begin{conjecture}\label{Coorttower}
Local cyclic extensions are liftable in towers.  That is, given a local
$G$-extension with $k[[z]]/k[[t]]$ with $G$ cyclic and a lift $R[[S]]/R[[T]]$ of a
subextension $k[[s]]/k[[t]]$ to characteristic zero, there is a lift
of $k[[z]]/k[[t]]$ to characteristic zero containing $R[[S]]/R[[T]]$
as a subextension.
\end{conjecture}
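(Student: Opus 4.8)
The natural plan is to reduce to the one-layer case. Writing $G=P\rtimes\ints/m$ with $P$ a cyclic $p$-group and $p\nmid m$, the $\ints/m$-part is rigid enough (Example \ref{Elocaltame}, Remark \ref{Rgeneralcyclic}) that it can be carried along without real choices, so one may assume $G=\ints/p^n$. Inducting on the number of layers between $k[[z]]/k[[t]]$ and the given subextension, it then suffices to treat the case $H=\ints/p$: we are handed a lift $R[[S]]/R[[T]]$ of the $\ints/p^{n-1}$-quotient $k[[s]]/k[[t]]:=k[[z]]^{\ints/p}/k[[t]]$, and we must extend it to a lift $R[[Z]]/R[[T]]$ of $k[[z]]/k[[t]]$ with $R[[S]]\subseteq R[[Z]]$. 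So the conjecture amounts to: every lift of a local $\ints/p^{n-1}$-extension extends to a lift of any local $\ints/p^n$-extension lying above it.

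The first real step is to run a \emph{relative} version of the successive-approximation method of Obus--Wewers (\S\ref{Sapprox}). That method already proceeds by fixing a lift of the $\ints/p^{n-1}$-quotient and then ``guessing'' an Artin--Schreier--Witt/Kummer extension of it of the expected shape, after which the deformation process of \cite{OW:wr} pushes the guess to an honest lift. The content of the relative version is simply to carry out this procedure with the fixed lift taken to be the \emph{given} $R[[S]]/R[[T]]$ rather than one manufactured by the algorithm. Provided the deformation process converges, this yields a lift of \emph{some} $\ints/p^n$-extension $k[[z']]/k[[t]]$ with $\ints/p^{n-1}$-quotient $k[[s]]/k[[t]]$ and smallest possible top jump, containing $R[[S]]/R[[T]]$; a further deformation (again keeping the subextension frozen) moves $k[[z']]$ to the target $k[[z]]$. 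By Theorem \ref{Tcyclicnoessram} and the argument of \cite{OW:ce}, I expect this to go through unconditionally when $k[[z]]/k[[t]]$ has \emph{no essential ramification}, since that is precisely the hypothesis under which \cite{OW:wr} guarantees convergence.

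The hard part will be \emph{essential ramification}, where convergence of the deformation process is not available and the only known route to a lift is the Mumford method of \S\ref{Smumford}: deform $k[[z]]/k[[t]]$ equicharacteristically (Proposition \ref{Pequicharexamples}(i)) to remove the essential ramification, lift the generic fibre, and glue (Theorem \ref{Tequicharsetup}). But Theorem \ref{Tequicharsetup} builds the characteristic-zero lift via Pop's gluing over a rank-two valuation ring followed by a specialization that is controlled only ``up to finitely many equations'' --- nothing pins down \emph{which} lift of the subextension results. Making the Mumford method respect a prescribed $R[[S]]/R[[T]]$ would require (i) an equicharacteristic deformation of $k[[z]]/k[[t]]$ whose induced deformation of $k[[s]]/k[[t]]$ lifts to characteristic zero \emph{compatibly} with $R[[S]]/R[[T]]$, and, more seriously, (ii) a refinement of Pop's gluing-and-specialization argument in which the specialized lift is constrained to contain $R[[S]]/R[[T]]$. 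I expect (ii) to be the genuine obstacle: it demands a rigidity in the Mumford construction that the current, largely model-theoretic and valuation-theoretic, arguments do not provide. An alternative that sidesteps the Mumford method would be deformation-theoretic: show that the restriction morphism of deformation functors $D_{\ints/p^n}\to D_{\ints/p^{n-1}}$ (induced by $k[[z]]\rightsquigarrow k[[z]]^{\ints/p}$) is surjective on $R$-points, after a finite extension of $R$, via the limit argument of Corollary \ref{Clocalglobal}, analysing the relative obstruction through a long exact sequence in equivariant cohomology relating $H^\bullet(\ints/p^n,\Theta_z)$, $H^\bullet(\ints/p^{n-1},\Theta_s)$ and $H^\bullet(\ints/p,\Theta_z)$ in the spirit of Theorem \ref{Tdeftower}; but since these deformation rings are genuinely singular (Theorem \ref{Tversal}(iii)) the morphism cannot be smooth, so one must instead isolate surjectivity onto the physically meaningful (Oort--Sekiguchi--Suwa-type) components, which is itself not understood.
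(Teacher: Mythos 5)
This statement is an open conjecture (Sa\"{i}di's ``Oort conjecture revisited''); the paper offers no proof of it, only a discussion of why it is plausible and where the difficulty lies. Your proposal is likewise not a proof but a research plan, and to your credit it says so. So the honest verdict is: there is a genuine gap, namely the entire argument, and no amount of polishing the sketch closes it with currently available tools.

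That said, your diagnosis of where the difficulty sits is close to, but not quite the same as, the paper's. You claim the relative successive-approximation argument ``goes through unconditionally'' in the no-essential-ramification case, with essential ramification (and hence the Mumford method) as the only obstacle. This overstates what is known. The induction in \cite{OW:ce} does not accept an arbitrary lift of the $\ints/p^{n-1}$-subextension as input: by \cite[Theorem 3.4(i)]{OW:ce} it requires the lift of the subextension to have branch points of sufficiently high valuation, and a prescribed $R[[S]]/R[[T]]$ need not satisfy this. Removing that valuation restriction is exactly what the paper identifies as the missing ingredient --- and it notes that doing so would prove the conjecture \emph{and} reprove the Oort conjecture without the Mumford method at all, so your bifurcation into an ``easy'' non-essential case and a ``hard'' Mumford case is not the right decomposition: the same restriction bites in both. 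Your observation that Theorem \ref{Tequicharsetup} gives no control over which lift of the subextension emerges from Pop's gluing-and-specialization is correct and is a real second obstacle; your deformation-theoretic alternative founders, as you note, on the singularity of the rings in Theorem \ref{Tversal}(iii) and our ignorance of the component structure (Remark \ref{ROSS}, Question \ref{Qdefvsartinschreier}).
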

The proof of the Oort conjecture in \cite{OW:ce} already proceeds by induction, and
one could in theory prove Conjecture \ref{Coorttower} by making the
induction process more flexible.  In particular, the induction
argument of \cite{OW:ce} only
works if one can lift a subextension so that the branch points all
have high enough valuation, see \cite[Theorem 3.4(i)]{OW:ce}.  If one
could remove the valuation restriction, Conjecture \ref{Coorttower}
would follow (and one would additionally get a proof of the Oort conjecture
without using the Mumford method).  Removing this restriction directly
seems more promising than trying to use deformation theory in towers
as in \cite{By:df} (see discussion before Theorem
\ref{Tdeftower}).  In any case, though, it would be interesting to
understand the deformation theory of local $\ints/p^n$-actions for $n
> 1$. 

\subsection{Non-algebraically closed residue fields}

Throughout this entire paper, we have considered the local lifting
problem over an \emph{algebraically closed} field of characteristic
$p$.  There is no reason that one cannot consider local $G$-extensions
$k[[z]]/k[[t]]$ where $k$ is an arbitrary field of characteristic $p$,
and try to lift them to a characteristic zero mixed characteristic
local domain \emph{with residue field $k$}.  The following is a
question of Oort.

\begin{question}\label{Qnonalgclosedlifting}
Does there exist a characteristic $p$ field $k$ and a cyclic branched cover of
curves defined over $k$ that does not lift over a characteristic
zero local normal domain $R$ with residue field $k$?  What if $k$ is perfect, or finite?  What if
we relax the assumption that $R$ is normal?
\end{question}
One can also ask the above question in the local context, i.e.,
whether there is a local cyclic extension over a characteristic $p$
field that does not lift over any
characteristic zero local normal domain.  It is not clear to me
whether this question is necessarily equivalent to Question
\ref{Qnonalgclosedlifting} (i.e., whether there is a local-global
principle in this context).
 
\appendix
\section{Some algebraic preliminaries}\label{Salgebraic}

\subsection{Homological Algebra}\label{Shomological}
\begin{lemma}[cf. {\cite[\S3, Lemma 1.1]{He:thesis}}]\label{Lhomological}
Let $R$ be a complete local noetherian ring with residue field $k$
and maximal ideal $\mf{m}$.
Suppose $0 \to M_1 \stackrel{u}{\to} M_2 \stackrel{v}{\to} M_3 \to 0$ is a complex of $\mf{m}$-adically separated
$R$-modules, with $M_1$ and $M_2$ complete, and $M_2$ and $M_3$ flat
over $R$.  For $i = 1,2,3$, let $\ol{M}_i = M_i / \mf{m} M_i$.  If $0 \to
\ol{M}_1 \stackrel{\ol{u}}{\to} \ol{M}_2 \stackrel{\ol{v}}{\to} \ol{M}_3 \to 0$ is exact, then so is $0 \to
M_1 \to M_2 \to M_3 \to 0$.
\end{lemma}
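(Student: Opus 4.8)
The plan is to split the statement into three parts --- surjectivity of $v$, exactness at $M_2$, and injectivity of $u$ --- where the first two are handled by a successive-approximation (Newton-type) argument exploiting the completeness of $M_2$ and $M_1$ together with the fact that $\mathfrak m$ is finitely generated (as $R$ is noetherian), and the third is a short homological argument built on the flatness of $\ker v$.

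First I would prove that $v$ is surjective. Fix generators $a_1,\dots,a_r$ of $\mathfrak m$. Given $x\in M_3$, use surjectivity of $\overline v$ to choose $y_0\in M_2$ with $x-v(y_0)\in\mathfrak m M_3$; writing $x-v(y_0)=\sum_i a_i x_i$ with $x_i\in M_3$ and repeating the first step on each $x_i$ yields $y_1\in\mathfrak m M_2$ with $x-v(y_0+y_1)\in\mathfrak m^2 M_3$, and inductively $y_n\in\mathfrak m^n M_2$ with $x-v(y_0+\dots+y_n)\in\mathfrak m^{n+1}M_3$. Since $M_2$ is complete, $y:=\sum_n y_n$ exists in $M_2$. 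The one genuinely delicate point of the whole proof is that $v(y)=x$ even though $M_3$ need not be complete: one writes $x-v(y)$ as the sum of $x-v(y_0+\dots+y_N)\in\mathfrak m^{N+1}M_3$ and $v\bigl(y-(y_0+\dots+y_N)\bigr)\in v(\mathfrak m^{N+1}M_2)\subseteq\mathfrak m^{N+1}M_3$, so $x-v(y)\in\bigcap_N\mathfrak m^N M_3=0$ by separatedness of $M_3$.

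Next, set $K=\ker v$, so $0\to K\to M_2\xrightarrow{v} M_3\to 0$ is exact. From the long exact $\Tor$ sequence, flatness of $M_2$ and $M_3$ forces $K$ to be flat; in particular $\Tor_1^R(M_3,k)=0$, so reducing this sequence modulo $\mathfrak m$ stays exact and identifies $\overline K$ with the submodule $\ker\overline v$ of $\overline{M_2}$. The complex hypothesis $vu=0$ gives a factorization $u=\iota\circ u'$ with $u'\colon M_1\to K$ and $\iota\colon K\hookrightarrow M_2$; comparing the mod-$\mathfrak m$ reductions with the assumption that $\overline u$ is injective with image $\ker\overline v$ shows that $\overline{u'}\colon\overline{M_1}\to\overline K$ is an isomorphism. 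Now I would rerun the successive-approximation argument with $u'$ in place of $v$ (using that $M_1$ is complete and that $K$, as a submodule of the separated $M_2$, is separated), concluding that $u'$ is surjective; hence $\im u=K=\ker v$, i.e.\ the sequence is exact at $M_2$.

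Finally, for injectivity of $u$, let $N=\ker u'$, so $0\to N\to M_1\xrightarrow{u'} K\to 0$. Tensoring with $k$ and using $\Tor_1^R(K,k)=0$ (flatness of $K$) gives an injection $\overline N\hookrightarrow\overline{M_1}$ whose image equals $\ker\overline{u'}=0$; hence $\overline N=0$, i.e.\ $N=\mathfrak m N$, so $N\subseteq\bigcap_j\mathfrak m^j N\subseteq\bigcap_j\mathfrak m^j M_1=0$ by separatedness of $M_1$. Thus $u'$, and therefore $u$, is injective, completing the proof. The main obstacle is precisely the limit-passing step in the successive approximation: at no point may one assume $M_3$ or $K$ is complete, so every discrepancy must instead be pushed into $\bigcap_n\mathfrak m^n M_2=0$ via the separatedness that $M_2$ carries and $K$ inherits.
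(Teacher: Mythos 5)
Your proof is correct and follows essentially the same route as the paper's: a successive-approximation argument using completeness of $M_2$ (resp.\ $M_1$) and separatedness of $M_3$ (resp.\ $\ker v$) for surjectivity and exactness in the middle, and a reduction mod $\mf{m}$ plus separatedness for injectivity. One point where your write-up is actually more robust than the paper's: for the injectivity step the paper asserts that $u(M_1)$, ``being torsion-free, is flat,'' which is only valid over special rings such as discrete valuation rings, not over an arbitrary complete local noetherian $R$; you instead extract flatness of $K=\ker v$ from the long exact $\Tor$ sequence applied to $0\to K\to M_2\to M_3\to 0$ and the flatness of $M_2$ and $M_3$, which works in the stated generality. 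Your explicit use of finitely many generators of $\mf{m}$ in the approximation step is likewise a correct expansion of the paper's shorthand $\beta_n - m_{n+1}\beta_{n+1}$.
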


\begin{proof}
To prove exactness on the right, take $\beta_0 \in M_3$.  There exists
$\alpha_0 \in M_2$ and $\beta_1 \in M_3$ such that $v(\alpha_0) =
\beta_0 - m_1 \beta_1$, with $m_1 \in \mf{m}$.  Similarly, define $\alpha_n \in M_2$ and
$\beta_{n+1} \in M_3$ so that $v(\alpha_n) = \beta_n - m_{n+1}
\beta_{n+1}$ for some $m_{n+1} \in \mf{m}$.  Letting $\alpha = \sum_{n=0}^\infty m_1 \cdots m_n \alpha_n$,
the separatedness of $M_3$ yields that $v(\alpha) = \beta$.  

To prove exactness on the left, let $N = \ker(u)$.  It is a closed
submodule of $M_1$, thus separated.  
Since $0 \to N \to M_1 \to u(M_1) \to 0$ is exact and $u(M_1)$, being
torsion-free, is flat, we have that $0 \to \ol{N} \to \ol{M}_1 \to
\ol{u(M_1)} \to 0$ is exact.  So $\ol{N} = 0$, and $N = \mf{m} N$.  Since
$N$ is separated, we have $N = \bigcap_{n=1}^{\infty} \mf{m}^n N = 0$.

Exactness in the middle follows from surjectivity of $\ol{M}_1 \to
\ol{\ker(v)}$ and the result on right-exactness above.
\end{proof}

\begin{remark}\label{Risom}
Taking $M_1 = 0$ in Lemma \ref{Lhomological}, one sees immediately that
if $v: M_2 \to M_3$ is a morphism of $\mf{m}$-adically separated flat
$R$-modules, with $M_2$ complete, then $v$ is surjective (resp.\ an
isomorphism) if and only if its reduction modulo $\pi$ is.
\end{remark}

\begin{prop}\label{Pflat}
Let $X$ be a regular, connected scheme of dimension $\leq 2$ with
function field $K(X)$, let $L/K(X)$ be a finite extension, and let
$f: Y \to X$ be the normalization of $X$ in $L$.  Then $f$ is flat.
\end{prop}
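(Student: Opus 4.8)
The plan is to verify flatness locally on $Y$ and reduce the whole statement to a question about module-finite extensions of regular local rings of dimension at most two, which is then settled by ``miracle flatness.''

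First I would record that, since $X$ is connected and regular, every local ring of $X$ is a domain, so $X$ is irreducible, hence integral with function field $K(X)$; thus $L$ is a field and $f\colon Y\to X$ is the normalization of an integral Noetherian scheme in a finite field extension, which in all cases of interest (and whenever $X$ is excellent, e.g.\ of finite type over a field or over a Dedekind domain) is a finite morphism. Flatness being local on the source, I would fix $y\in Y$ lying over $x\in X$ and put $A=\mathcal{O}_{X,x}$, $B=\mathcal{O}_{Y,y}$. Then $A$ is a regular local ring of dimension $d\le 2$; $B$ is a finite $A$-module which is a normal Noetherian local domain with $\operatorname{Frac}(B)=L\supseteq\operatorname{Frac}(A)=K(X)$, hence torsion-free over $A$; and $B$ is integral over $A$, so $\dim B=d$. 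It suffices to prove $B$ is flat — equivalently, being finite over the local ring $A$, free — over $A$.

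The cases $d\le 1$ are immediate: for $d=0$, $A$ is a field; for $d=1$, $A$ is a discrete valuation ring and a finite torsion-free module over it is free. The real content is $d=2$. Here I would argue that $B$, being a normal Noetherian domain, satisfies Serre's condition $(S_2)$ by the $(R_1)+(S_2)$ criterion for normality; since $\dim B=2$, this forces $\operatorname{depth} B=2$, i.e.\ $B$ is Cohen--Macaulay. Because $A\to B$ is a finite local homomorphism, the closed fibre $B/\mathfrak{m}_A B$ is a finite-dimensional algebra over the field $A/\mathfrak{m}_A$, hence Artinian, so $\dim(B/\mathfrak{m}_A B)=0$ and therefore $\dim B=2=\dim A+\dim(B/\mathfrak{m}_A B)$. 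Now the local criterion of flatness over a regular base (``miracle flatness'': a local homomorphism $A\to B$ with $A$ regular local, $B$ Cohen--Macaulay local, and $\dim B=\dim A+\dim(B\otimes_A A/\mathfrak{m}_A)$ is flat; see e.g.\ Matsumura, \emph{Commutative Ring Theory}, Thm.~23.1, or EGA IV, 6.1.5) gives that $B$ is flat over $A$. As this holds at every $y\in Y$, $f$ is flat.

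The main obstacle is the two-dimensional case, and within it the two substantive inputs: that a two-dimensional normal Noetherian local domain is Cohen--Macaulay, and miracle flatness itself. Everything else — the reduction to local rings, finiteness of the normalization, torsion-freeness, and the dimension count — is formal. This is also exactly where the hypotheses enter: regularity of $X$ makes the source rings regular so that miracle flatness is available, and the bound $\dim X\le 2$ is precisely what lets normality of $Y$ already force Cohen--Macaulayness (in dimension $\ge 3$ a normal scheme need not be Cohen--Macaulay, and indeed finite covers of regular schemes of dimension $\ge 3$ need not be flat).
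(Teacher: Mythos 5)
Your proposal is correct and follows essentially the same route as the paper's proof: both reduce to the local rings, deduce that the normal local rings of $Y$ are Cohen--Macaulay via Serre's $(S_2)$ criterion in dimension $\leq 2$, and conclude by miracle flatness (the paper cites Eisenbud, Theorem 18.16) together with the observation that the closed fibre is zero-dimensional. Your extra remarks on finiteness of the normalization and the low-dimensional cases are harmless additions.
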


\begin{proof}
Since $f$ is an integral morphism, $\dim Y = \dim X \leq 2$.  Since $Y$ is normal, it is
$S_2$ by Serre's criterion and thus Cohen-Macaulay.  Since flatness is
local, it suffices to check that the integral extension $\mc{O}_{Y, y}/\mc{O}_{X, f(y)}$ is
flat for all $y \in Y$.  By \cite[Theorem 18.16]{Ei:ca}, it suffices
to show that $\mc{O}_{Y, y}/\mf{m}_{f(y)}$ is zero-dimensional.  But this
is true because $\kappa(f(y))$ is a field and $\mc{O}_{Y,y}/\mf{m}_{f(y)}$ is integral over $\kappa(f(y))$.
\end{proof}

\subsection{Complete local rings}\label{Scomplete}
\begin{lemma}\label{Lcohen}
If $x$ is a closed point on a smooth curve $X/k$, then
$\hat{\mc{O}}_{X, x} \cong k[[t]]$.
\end{lemma}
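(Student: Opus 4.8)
The plan is to recognize $\hat{\mc{O}}_{X,x}$ as a complete regular local ring of Krull dimension one with residue field $k$, and then to produce the asserted isomorphism by choosing a coefficient field and a uniformizer, exactly as in Cohen's structure theorem.

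First I would observe that, since $X$ is a smooth curve over $k$ and $x$ is a closed point, $\mc{O}_{X,x}$ is a regular local ring of dimension $1$; as $k = \bar k$, its residue field $\kappa(x)$ is a finite extension of $k$, hence equals $k$. Completion preserves both regularity and dimension, so $\hat{\mc{O}}_{X,x}$ is a complete regular local ring of dimension $1$ with residue field $k$, and in particular a complete discrete valuation ring.

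Next I would exhibit a coefficient field. Because $X$ is a $k$-scheme, the structure morphism yields maps $k \to \mc{O}_{X,x} \to \hat{\mc{O}}_{X,x}$, and the composite $k \to \hat{\mc{O}}_{X,x} \to \kappa(x) = k$ is the identity; thus the image of $k$ in $\hat{\mc{O}}_{X,x}$ maps isomorphically onto the residue field. Now choose a generator $t$ of the maximal ideal $\mf{m}$ of $\hat{\mc{O}}_{X,x}$, which exists by regularity. Since $\hat{\mc{O}}_{X,x}$ is $\mf{m}$-adically complete and $t \in \mf{m}$, every power series in $t$ converges, so the assignment $t \mapsto t$ extends, over the coefficient field, to a $k$-algebra homomorphism $\phi : k[[t]] \to \hat{\mc{O}}_{X,x}$.

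Finally I would check that $\phi$ is an isomorphism. Both rings are $\mf{m}$-adically separated and complete, so it suffices to show that $\phi$ induces an isomorphism on the associated graded rings for the $\mf{m}$-adic filtrations, since a homomorphism of complete filtered rings that is a graded isomorphism is itself an isomorphism. The graded ring of $k[[t]]$ is the polynomial ring $k[\bar t]$, and the graded ring of $\hat{\mc{O}}_{X,x}$ is likewise $k[\bar t]$, because a regular local ring of dimension one has polynomial associated graded ring in one variable; as $\phi$ carries $\bar t$ to $\bar t$, the induced graded map is the identity. The only point deserving care is this last identification — equivalently Cohen's structure theorem in dimension one — but it is entirely standard and may simply be cited, so I do not anticipate a genuine obstacle.
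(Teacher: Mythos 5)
Your argument is correct, and it reaches the same statement the paper gets by a one-line citation: the paper simply observes that $\hat{\mc{O}}_{X,x}$ is a complete discrete valuation ring in equal characteristic with residue field $k$ and invokes Cohen's structure theorem (\cite[II, Theorem 2]{Se:lf}). What you do differently is make the argument self-contained, and in doing so you sidestep the only nontrivial input of Cohen's theorem: the existence of a coefficient field. Because $X$ is a $k$-scheme, the structure map $k \to \mc{O}_{X,x} \to \hat{\mc{O}}_{X,x}$ already splits the reduction to $\kappa(x)=k$, so the coefficient field comes for free, and the rest (choose a uniformizer $t$, map $k[[t]]$ in by completeness, check the map is an isomorphism on associated graded rings, hence an isomorphism of complete separated filtered rings) is routine; one could equally finish by noting that a map of complete DVRs which is a $k$-isomorphism modulo each power of the maximal ideal is an isomorphism, in the spirit of the paper's Remark \ref{Risom}. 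The paper's route buys brevity and works verbatim for any equicharacteristic complete DVR with perfect residue field, with no $k$-algebra structure given in advance; your route buys transparency and uses only the regularity of $\mc{O}_{X,x}$, the Nullstellensatz identification $\kappa(x)=k$, and elementary facts about filtrations, which is entirely adequate here.
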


\begin{proof}
Since $\hat{\mc{O}}_{X, x}$ is a complete discrete valuation ring in
equal characteristic with residue field $k$, this follows, e.g., from
\cite[II, Theorem 2]{Se:lf}.
\end{proof}

\begin{lemma}\label{Llocalring}
If $X_R$ is a curve defined over a complete local noetherian ring
$R$ with residue field $k$ and $x$ is a smooth closed point of $X_R$, then the complete local
ring $\hat{\mc{O}}_{X_R, x}$ is isomorphic to a power series ring
$R[[T]]$.
\end{lemma}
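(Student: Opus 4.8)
The plan is to write down the tautological candidate $R$-algebra homomorphism $R[[T]]\to\hat{\mc{O}}_{X_R,x}$ and verify it is an isomorphism via Remark \ref{Risom} (that is, via Lemma \ref{Lhomological} applied to the complete local noetherian base ring $R$ with its maximal ideal $\mf{m}_R$). First I would reduce to the relevant case: $x$ lies on the special fiber $X:=X_R\times_R k$, so that $\kappa(x)=k$. (This is the case used below, and in fact the statement requires it: a closed point of $X_R$ mapping to the generic point of $\Spec R$ has completed local ring a complete discrete valuation ring dominating $R$ whose residue field is finite over $\Frac(R)$.) Smoothness of $X_R/R$ at $x$ then yields two things: $\mc{O}_{X_R,x}$ is flat over $R$, and $X$ is a smooth curve over $k$ near $x$, so that $\mc{O}_{X,x}=\mc{O}_{X_R,x}/\mf{m}_R\mc{O}_{X_R,x}$ is a regular local ring of dimension one with residue field $k$; hence $\hat{\mc{O}}_{X,x}\cong k[[t]]$ for some uniformizer $t$ by Lemma \ref{Lcohen}.

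Next I would pick $T\in\mc{O}_{X_R,x}$ lifting $t$. Since $t$ is a nonunit, $T$ maps to $0$ in $\kappa(x)=k$, so $T$ lies in the maximal ideal $\mf{m}_x$ of $\mc{O}_{X_R,x}$; consequently, for any coefficients $a_0,a_1,\dots\in R$ the partial sums of $\sum_i a_iT^i$ converge in the $\mf{m}_x$-adically complete noetherian local ring $\hat{\mc{O}}_{X_R,x}$, and this defines an $R$-algebra homomorphism $\varphi\colon R[[T]]\to\hat{\mc{O}}_{X_R,x}$ carrying the indeterminate to $T$. Because completion commutes with the quotient by $\mf{m}_R$ for the noetherian local ring $\mc{O}_{X_R,x}$, we have $\hat{\mc{O}}_{X_R,x}/\mf{m}_R\hat{\mc{O}}_{X_R,x}\cong\hat{\mc{O}}_{X,x}\cong k[[t]]$, and the reduction of $\varphi$ modulo $\mf{m}_R$ is the map $k[[T]]\to k[[t]]$, $T\mapsto t$, which is an isomorphism since a uniformizer generates the maximal ideal of a complete discrete valuation ring.

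To conclude I would invoke Remark \ref{Risom} with respect to the $\mf{m}_R$-adic topology. The hypotheses hold: $R[[T]]$ is free over $R$ and is $\mf{m}_R$-adically complete and separated, since $R[[T]]/\mf{m}_R^nR[[T]]\cong(R/\mf{m}_R^n)[[T]]$ and the inverse limit of these is $R[[T]]$ as $R$ is complete; and $\hat{\mc{O}}_{X_R,x}$ is flat over $R$ (being flat over the $R$-flat ring $\mc{O}_{X_R,x}$) and is $\mf{m}_R$-adically complete and separated because it is a complete noetherian local ring and $\mf{m}_R$ is a proper ideal of it. Thus $\varphi$ is a morphism of $\mf{m}_R$-adically separated flat $R$-modules with complete source whose reduction modulo $\mf{m}_R$ is an isomorphism, so $\varphi$ is an isomorphism. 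The same argument applies to any lift $T$ of any uniformizer $t$ of $\hat{\mc{O}}_{X,x}$.

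I expect the only real subtlety to be this last step: one must be careful that Remark \ref{Risom}, stated there in the notation of a discrete valuation ring with uniformizer $\pi$, is really being used in the form of Lemma \ref{Lhomological}, which is valid over an arbitrary complete local noetherian ring with reduction taken modulo its maximal ideal, and that $R[[T]]$ and $\hat{\mc{O}}_{X_R,x}$ satisfy completeness, separatedness and flatness with respect to $\mf{m}_R$ --- not merely with respect to their own maximal ideals.
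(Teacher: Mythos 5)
Your proposal is correct and follows exactly the paper's argument: define $\phi\colon R[[T]]\to\hat{\mc{O}}_{X_R,x}$ by sending $T$ to a lift of a uniformizer $t$ of $\hat{\mc{O}}_{X,x}\cong k[[t]]$ (Lemma \ref{Lcohen}), observe that $\phi$ is an isomorphism modulo the maximal ideal of $R$, and conclude via Remark \ref{Risom}. Your added verifications of the flatness, completeness, and separatedness hypotheses with respect to $\mf{m}_R$ are exactly the checks the paper leaves implicit, and your caveat that Remark \ref{Risom} must be read in the generality of Lemma \ref{Lhomological} (arbitrary complete local noetherian base, not just a discrete valuation ring) is well taken.
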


\begin{proof}
Let $X$ be the special fiber of $x$.  Then $\hat{\mc{O}}_{X, x} \cong
k[[t]]$ (Lemma \ref{Lcohen}).  Consider a homomorphism $\phi: R[[T]] \to \hat{\mc{O}}_{X_R, x}$ where
$T$ is sent to any lift of $t$.  The assumptions of Remark \ref{Risom}
are satisfied and the map $\phi$ is an isomorphism
modulo the maximal ideal of $R$  We conclude that $\phi$ is an isomorphism.
\end{proof}

\begin{remark}\label{Ranyuniformizer}
The proof above shows that if $T'$ is any element of $R[[T]]$ such
that $T'$ reduces to a uniformizer of $k[[t]]$, then $R[[T']] = R[[T]]$.
\end{remark}

\subsection{Ramification theory}\label{Sramification}
The following facts are from \cite[IV]{Se:lf}.
Recall that if $k$ is a field of characteristic $p$, one can form the
ring $W(k)$ of \emph{Witt vectors} over $k$.  If $k$ is perfect, this
is the unique complete characteristic zero discrete valuation ring with residue field $k$
and uniformizer $p$.  

Let $F$ be a complete DVF with algebraically closed residue field of
characteristic $p \geq 0$ and
uniformizer $\pi$.  If $L/F$ is a finite $G$-Galois extension, then 
$G$ is of the form $P \rtimes \ints/m$, where $P$ is a $p$-group
and $m$ is prime to $p$ (if $p = 0$, then $P$ is trivial).  In particular, $G$ is solvable.  The group $G$ has a filtration $G 
= G_0 \supseteq G_i$ ($i \in \reals_{\geq 0}$) defined by
$$g \in G_i \leftrightarrow v\left(\frac{g(\pi)}{\pi} - 1\right) \geq i+1$$ (here
$v$ is defined so that $v(\pi) = 1$.
There is also a filtration $G \supseteq G^i$ for the upper numbering ($i \in
\reals_{\geq 0}$) given by $G^i = G_{\psi(i)}$, where $\psi$ is the
inverse of the \emph{Herbrand function} $\varphi$, given by $\varphi(u)
= \int_0^u dt/[G_0 : G_t]$.  If $i \leq j$, then $G_i \supseteq G_j$ and $G^i \supseteq G^j$.  
The subgroup $G_i$ (resp.\ $G^i$) is known as the \emph{$i$th higher ramification group for the lower numbering (resp.\ the upper 
numbering)}.  

One knows that $G_0 = G^0 = G$, and that $G_1 = G^{\frac{1}{m}} = P$
(in particular, if $p = 0$ then $G_1$ is trivial).   
For sufficiently large $i$, $G_i = G^i = \{id\}$.   Any $i$ such that $G^i \supsetneq G^{i + \epsilon}$ for all $\epsilon > 0$ is
called an \emph{upper jump} of the extension $L/F$.  Likewise, if $G_i \supsetneq G_{i+\epsilon}$ for $\epsilon > 0$, then $i$ is called a
\emph{lower jump} of $L/F$.  If $i$ is a lower (resp.\ upper) jump, $i > 0$, and $\epsilon > 0$ is sufficiently small,
then $G_i/G_{i + \epsilon}$ (resp.\ $G^i/G^{i + \epsilon}$) is an
elementary abelian $p$-group.  The lower jumps are clearly all
integers.  The Hasse-Arf theorem says that the upper jumps are
integers whenever $G$ is \emph{abelian} (in general, the upper jumps
need only be rational).   The extension $L/F$ is called \emph{tamely ramified} if $G_1 =
\{id\}$ (equivalently, $G \cong \ints/m$), and \emph{wildly ramified}
otherwise.  

\begin{example}\label{Etametransition}
Suppose $L/F$ is a $G$-Galois extension as above, with residue field of characteristic $p >
0$.  Let $M$ be the subextension corresponding to $P \leq G$.  By the
definition of the lower numbering, 
we have $P_i = G_i$ for $i > 0$.  By the definition of
the upper numbering, we have $P^i = G^{i/m}$.  In particular, if $P$
is abelian then the upper jumps for $L/F$ lie in $\frac{1}{m}\ints$.  
\end{example}

The degree $\delta$ of the different of a $G$-extension $L/F$ is given by the formula
\begin{equation}\label{Ebasicdifferent}
\delta = \sum_{i=0}^{\infty} (|G_i| - 1).
\end{equation}  
Note that $\delta \geq |G|-1$, with strict
inequality holding if and only if $L/F$ is wildly ramified.

\begin{example}\label{Ejumpprogression}
Suppose $k[[z]]/k[[t]]$ is a $\ints/p^n$-extension with \emph{upper
  jumps} $u_1 < \cdots < u_n$.  
For $i > 1$, we have $u_i \geq pu_{i-1}$, with $p \nmid
u_i$ whenever strict inequality holds (see, e.g., \cite[Theorem 1.1]{Ga:ls}).
\end{example}

Now, let $f: Y \to X$ be a degree $d$ branched cover of curves over an
algebraically closed field $k$.  The cover is called tamely (resp.\ wildly)
ramified at a point $y$ if the corresponding extension
$\hat{\mc{O}}_{Y,y}/\hat{\mc{O}}_{X, f(y)}$ is.  
The \emph{Riemann-Hurwitz formula} (\cite[IV, \S2]{Ha:ag}) states that 
$$2g_Y - 2 = d(2g_X - 2) + \sum_{y \in Y} \delta_y,$$ where $\delta_y$
is the degree of the different of
$\hat{\mc{O}}_{Y,y}/\hat{\mc{O}}_{X, f(y)}$, and $g_Y$ (resp.\ $g_X$)
is the genus of $Y$ (resp.\ $X$).  The Riemann-Hurwitz formula,
combined with (\ref{Ebasicdifferent}), 
shows that if $Y \to X$ is a wildly ramified cover of curves over $k$, then the genus of $Y$ is higher than it would be 
if the cover had the same ramification points and indices, but was in
characteristic zero.

\subsection{Miscellaneous}\label{Smisc}

The following lemma is used in the proof of Theorem
\ref{Tequicharsetup}.

\begin{lemma}\label{Ldescent}
Let $k$ be an algebraically closed field of characteristic $p$, and
let $A$ be a finitely generated $W(k)$-algebra that is a domain.  If
$\mf{m} \subseteq A$ is an ideal such that $A/\mf{m} \cong k$ as a $W(k)$-algebra, then
there is an ideal $I \subseteq \mf{m} \subseteq A$ such that $A/I$ is
a finite extension of $W(k)$.
\end{lemma}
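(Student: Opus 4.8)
The plan is to produce $I$ as a prime ideal $\mathfrak{q}$ of $A$ with $\mathfrak{q}\subseteq\mathfrak{m}$, $p\notin\mathfrak{q}$, and $\dim(A/\mathfrak{q})=1$; I will then check that any such quotient is automatically module-finite over $W(k)$. Two preliminary remarks: since a ring module-finite over $W(k)$ has characteristic zero, the existence of an ideal $I$ as asserted forces $W(k)\to A$ to be injective, so I assume this (it holds in the situation of Theorem~\ref{Tequicharsetup}, where $A$ lies inside a characteristic zero ring), whence $p\neq0$ in $A$; and the composite $W(k)\to A\to A/\mathfrak{m}=k$ is reduction mod $p$, so $p\in\mathfrak{m}$. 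Throughout I will use that $W(k)$, being a complete Noetherian local ring, is excellent, hence universally catenary; in particular every domain $C$ finitely generated over $W(k)$ is catenary, and the dimension formula gives $\operatorname{ht}(\mathfrak{n})=\dim C$ for every maximal ideal $\mathfrak{n}$ of $C$ whose residue field is $k$ (maximal ideals not containing $p$ have height one smaller).

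For the cutting-down step I would build primes $(0)=\mathfrak{q}_0\subsetneq\mathfrak{q}_1\subsetneq\cdots\subseteq\mathfrak{m}$ with $p\notin\mathfrak{q}_i$ and $\dim(A/\mathfrak{q}_i)$ strictly decreasing. Given $\mathfrak{q}_i$ with $\dim(A/\mathfrak{q}_i)\geq2$, put $B=A/\mathfrak{q}_i$ and $\bar{\mathfrak{m}}=\mathfrak{m}/\mathfrak{q}_i$, a maximal ideal of the domain $B$ of height $\dim B\geq2$. The ideal $pB$ has finitely many minimal primes, each of height $1$ (Krull's principal ideal theorem, using $p\neq0$), and none of them can contain $\bar{\mathfrak{m}}$ for height reasons; by prime avoidance choose $f\in\bar{\mathfrak{m}}$ avoiding all of them, and let $\bar{\mathfrak{q}}$ be a minimal prime of $fB$ contained in $\bar{\mathfrak{m}}$. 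Then $\operatorname{ht}(\bar{\mathfrak{q}})=1$, and $p\notin\bar{\mathfrak{q}}$: otherwise $\bar{\mathfrak{q}}$ would contain, hence (heights again) equal, a minimal prime of $pB$, contradicting $f\in\bar{\mathfrak{q}}$. Let $\mathfrak{q}_{i+1}\subseteq\mathfrak{m}$ be the preimage of $\bar{\mathfrak{q}}$ in $A$; then $p\notin\mathfrak{q}_{i+1}$ and, by catenarity and the dimension formula, $\dim(A/\mathfrak{q}_{i+1})=\dim(A/\mathfrak{q}_i)-1$. Since $\dim(A/\mathfrak{q}_i)\geq1$ always (because $p\in\mathfrak{m}\setminus\mathfrak{q}_i$ forces $\mathfrak{q}_i\subsetneq\mathfrak{m}$), this terminates at some prime $\mathfrak{q}\subseteq\mathfrak{m}$ with $p\notin\mathfrak{q}$ and $\dim(A/\mathfrak{q})=1$.

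Finally I would verify that $B:=A/\mathfrak{q}$ is module-finite over $W(k)$. As $B$ is a one-dimensional Noetherian domain and $p$ is a nonzero nonunit, $B/pB$ is zero-dimensional, hence Artinian, hence (being finitely generated over $k$) of finite dimension over $k$. Pass to the $p$-adic completion $\widehat{B}=\varprojlim_n B/p^nB$: it is $p$-adically complete and separated with $\widehat{B}/p\widehat{B}\cong B/pB$ finite over $k$, so a topological version of Nakayama's lemma shows $\widehat{B}$ is finitely generated as a $W(k)$-module. On the other hand $\bigcap_n p^nB=0$ by Krull's intersection theorem (here one uses that $B$ is a domain and $p$ a nonunit), so $B\hookrightarrow\widehat{B}$; since $W(k)$ is Noetherian, $\widehat{B}$ is a Noetherian $W(k)$-module, hence so is its $W(k)$-submodule $B$. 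Thus $B$ is module-finite over $W(k)$, and since $k$ is algebraically closed and $p$ lies in the maximal ideal of $B$, the ring $B=A/\mathfrak{q}$ is a complete local domain of dimension one with residue field $k$, a finite extension of $W(k)$ (its normalization, if one wants a discrete valuation ring outright, is finite over it because $W(k)$ is excellent). Taking $I=\mathfrak{q}$ completes the argument.

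The step I expect to be the real obstacle is the cutting-down: the constraint $p\notin\mathfrak{q}$ must be preserved while the dimension drops, which is exactly why one cuts by a hypersurface avoiding the (finitely many, height one) minimal primes of $pB$ rather than a generic one, and why the minimal-prime bookkeeping above is needed. A secondary point deserving care is the passage from ``quasi-finite over $W(k)$'' to ``module-finite over $W(k)$''; I have routed this through completion and Nakayama, but one could instead invoke Zariski's Main Theorem together with the fact that a ring finite over the complete local ring $W(k)$ is a finite product of complete local rings, one of which must then equal $A/\mathfrak{q}$ since $\operatorname{Spec}(A/\mathfrak{q})$ is irreducible and meets the closed fibre.
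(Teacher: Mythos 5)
Your proof is correct, but it takes a genuinely different route from the paper's. The paper argues geometrically: it embeds $\Spec A$ into $\aff^n_{W(k)}$, passes to the projective closure $Z \subseteq \proj^n_{W(k)}$, and invokes the specialization theorem for proper schemes over a valuation ring (Mumford's \emph{Red Book}, II, \S 8, Theorem 1) to produce a closed point $y$ of the generic fiber of $Z$ specializing to the point $x$ corresponding to $\mf{m}$; since $x$ avoids the hyperplane at infinity, so does $y$, and $I = I(\ol{\{y\}})$ works because $\ol{\{y\}}$ is proper and quasi-finite, hence finite, over $\Spec W(k)$. You replace the compactification-and-specialization step by iterated hypersurface cutting (prime avoidance against the minimal primes of $pB$, Krull's principal ideal theorem, and the dimension formula over the universally catenary ring $W(k)$), and you replace the properness argument by the completion/Nakayama/Krull-intersection argument showing that a one-dimensional finitely generated $W(k)$-algebra domain in which $p$ is a nonzero nonunit is module-finite over $W(k)$. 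Both halves of your argument check out; your version is more elementary and self-contained, at the cost of more dimension-theoretic bookkeeping, while the paper's is shorter modulo the cited theorem. Two of your side remarks are also well taken: the statement does implicitly require $W(k) \to A$ to be injective (the paper's proof uses this tacitly when asserting that $x$ lies in the closure of the generic fiber, and it holds in the lemma's only application, where $A$ sits inside a characteristic zero valuation ring), and with either proof $A/I$ is a priori only a local domain finite over $W(k)$ rather than a discrete valuation ring, so the normalization step you mention is needed if one wants the latter.
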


\begin{proof}
Embed $\Spec A$ into $\aff^n_{W(k)} \subseteq \proj^n_{W(k)}$ for some
$n$, and let $Z$ be the projective
closure of $\Spec A$ in $\proj^n_{W(k)}$.  Let $x \in \Spec A
\subseteq Z$ be the point
corresponding to $\mf{m}$.  Since $A$ is a domain, $x$ is in the
closure of the generic fiber of $\Spec A$, and thus of $Z$.  By
\cite[II, \S8, Theorem 1]{Mu:rb}, $x$ is the specialization of some
geometric point on the generic fiber of $Z$.  Let $y \in Z$ be the image of this
point.  Since $x$ does
not lie in the hyperplane at infinity, neither does $y$.  So $y$ is in
the generic fiber of $\Spec A$.  Since the closure $\ol{\{y\}}$ of $y$ is finite
over $\Spec W(k)$ and contains $x$, taking $I = I(\ol{\{y\}})
\subseteq A$ gives the
desired ideal.  
\end{proof}

\section*{Acknowledgements}
I thank Frans Oort for guidance and help in preparing this chapter,
and Florian Pop for useful conversations.

\addcontentsline{toc}{section}{Bibliography}
\bibliographystyle{alpha}
\bibliography{main}

\end{document}